\begin{document}

\newtheorem{theorem}{Theorem}[section]
\newtheorem{lemma}[theorem]{Lemma}
\newtheorem{lm}[theorem]{Lemma}
\newtheorem{corollary}[theorem]{Corollary}
\newtheorem{conjecture}[theorem]{Conjecture}
\newtheorem{cor}[theorem]{Corollary}
\newtheorem{proposition}[theorem]{Proposition}
\newtheorem{prop}[theorem]{Proposition}
\theoremstyle{definition}
\newtheorem{definition}[theorem]{Definition}
\newtheorem{example}[theorem]{Example}
\newtheorem{question}[theorem]{Question}
\newtheorem{claim}[theorem]{Claim}
\newtheorem{remark}[theorem]{Remark}


\newcommand{\R}{\mathbb{R}}
\newcommand{\C}{\mathbb{C}}
\newcommand{\Z}{\mathbb{Z}}
\newcommand{\Q}{\mathbb{Q}}
\newcommand{\E}{\mathbb E}
\newcommand{\N}{\mathbb N}
\newcommand{\X}{\mathbf X}

\renewcommand{\cosh}{\operatorname{ch}}
\renewcommand{\sinh}{\operatorname{sh}}

\newcommand{\del}{\ensuremath{\partial}}
\newcommand{\Def}{\ensuremath{:=}}

\newcommand{\TODO}[1]{\textbf{TODO:}{#1}}


\renewcommand{\Pr}{\mathbb{P}}
\newcommand{\as}{\text{a.s.}}
\newcommand{\Prob}{\Pr}
\newcommand{\Exp}{\mathbb{E}}
\newcommand{\expect}{\Exp}
\newcommand{\1}{\mathbf{1}}
\newcommand{\prob}{\Pr}
\newcommand{\pr}{\Pr}
\newcommand{\filt}{\mathscr{F}}
\DeclareDocumentCommand \one { o }
{%
\IfNoValueTF {#1}
{\mathbf{1}  }
{\mathbf{1}\left\{ {#1} \right\} }%
}
\newcommand{\Bernoulli}{\operatorname{Bernoulli}}
\newcommand{\Binomial}{\operatorname{Binom}}
\newcommand{\Beta}{\operatorname{Beta}}
\newcommand{\Binom}{\Binomial}
\newcommand{\Poisson}{\operatorname{Poisson}}
\newcommand{\Exponential}{\operatorname{Exp}}
\newcommand{\Unif}{\operatorname{Unif}}
\newcommand{\lawequals}{\overset{\mathcal{L}}{=} }


\newcommand{\Deg}{\operatorname{deg}}
\DeclareDocumentCommand \vso { o }
{%
\IfNoValueTF {#1}
{\mathcal{V}  }
{\mathcal{V}\left( {#1} \right) }%
}

\DeclareDocumentCommand \deg { O{ } }
{ \operatorname{deg}_{ #1 }}
\newcommand{\oneE}[2]{\mathbf{1}_{#1 \leftrightarrow #2}}
\newcommand{\ebetween}[2]{{#1} \leftrightarrow {#2}}
\newcommand{\noebetween}[2]{{#1} \centernot{\leftrightarrow} {#2}}
\newcommand{\Gap}{\ensuremath{\tilde \lambda_2 \vee |\tilde \lambda_n|}}
\newcommand{\dset}[2]{\ensuremath{ e({#1},{#2})}}


\newcommand{\Htwo}{ \mathbb{H}}
\newcommand{\Hd}{ \mathbb{H}^d}
\newcommand{\Etwo}{ \mathbb{E}}
\newcommand{\GWT}{ \mathcal{T}}
\newcommand{\HB}{ B_{\Htwo}}
\newcommand{\MB}{ B_{\mathbb{M}}}
\newcommand{\LB}{ B_{\mathscr{L}}}
\newcommand{\dH}{ d_{\Htwo}}
\newcommand{\dA}{ d_{\mathcal{A}}}
\newcommand{\PV}{ \mathcal{V}}
\newcommand{\VD}{ \mathscr{G}^\lambda }
\newcommand{\diamM}{ \operatorname{diam}_{\mathbb{M}} }
\newcommand{\diamH}{ \operatorname{diam}_{\Htwo} }
\newcommand{\dE}{d_{\Etwo}}
\newcommand{\Lattice}{ \Gamma }
\DeclareDocumentCommand \Vol { O{G} }{ \operatorname{Vol}_{#1}}
\newcommand{\VolH}{ \operatorname{Vol}_{\Htwo}}
\newcommand{\VolE}{\operatorname{Vol}_{\Etwo}}
\newcommand{\VolV}{\operatorname{Vol}_{\HPV}}
\newcommand{\VolD}{\operatorname{Vol}_{\HPD}}
\newcommand{\VolM}{ \operatorname{Vol}_{\Htwo}}
\newcommand{\convH}{ \operatorname{conv}_{\Htwo}}
\newcommand{\HPV}{ \mathscr{V}^\lambda}
\newcommand{\HPD}{ \mathscr{D}^\lambda}

\newcommand{\Del}{ \operatorname{Del}}
\newcommand{\Star}{ \operatorname{St}}
\newcommand{\GF}{\mathcal{G}_f}
\newcommand{\DeltaH}{\Delta_{\Htwo}}
\newcommand{\DeltaE}{\Delta_{\Etwo}}
\newcommand{\angleH}{\angle_{\Htwo}}
\newcommand{\angleE}{\angle_{\Etwo}}

\newcommand{\CDisc}{ \operatorname{CD}_{\Htwo}}
\newcommand{\CCtr}{ \operatorname{CC}_{\Htwo}}

\newcommand{\PPP}{ \Pi^\lambda }
\newcommand{\PP}{ \mathcal{S} }

\DeclareDocumentCommand \TFX { O{i} O{\pi_{\X}} O{f}  }
{ \Delta_{ {#2}, {#3} }({#1})}

\DeclareDocumentCommand \Filt { O{n} }
{ \mathscr{F}_{{#1}} }

\DeclareDocumentCommand \isol { O{i} O{S} }
{ \Delta_{#1} \left( {#2} \right) }

\newcommand{\aec}{i^*}
\DeclareDocumentCommand \islands {  O{i} }
{ \operatorname{IS}_{ {#1 }} }

\newcommand{\BHF}{ \mathcal{B}}

\title[Expansion in hyperbolic space]{Anchored expansion of Delaunay complexes in real hyperbolic space and stationary point processes}

\author{Itai Benjamini}
\address{Department of Mathematics, Weizmann Institute of Science}
\email{itai.benjamini@weizmann.ac.il}
\author{Yoav Krauz}
\address{Tel-Aviv University}
\email{yoavkrauz@mail.tau.ac.il}
\author{Elliot Paquette}
\address{Department of Mathematics, McGill University}
\email{elliot.paquette@gmail.com}
\date{\today}

\begin{abstract}
We give sufficient conditions for a discrete set of points in any dimensional real hyperbolic space to have positive anchored expansion.  The first condition is a anchored bounded density property, ensuring not too many points can accumulate in large regions.  The second is a anchored bounded vacancy condition, effectively ensuring there is not too much space left vacant by the points over large regions.  These properties give as an easy corollary that stationary Poisson--Delaunay graphs have positive anchored expansion, as well as Delaunay graphs built from stationary determinantal point processes.  

We introduce a family of stationary determinantal point processes on any dimension of real hyperbolic space, the Berezin point processes, and we partially characterize them.  We pose many questions related to this process and stationary determinantal point processes.
\end{abstract}
\maketitle

\section{Introduction}

While still in graduate school,
Harry Kesten initiated the study of random walk on groups and famously related nonamenability to strictly positive spectral gap
in a truly pioneering work \cite{Kesten}.
Decades later he was leading the study of random walk in random environment
and of the geometry of randomly diluted media, and in particular the theory of percolation.  
In the following note we study the geometry of random discrete triangulations of hyperbolic spaces and of the speed of random walk on them.
We prove a type of relaxed spectral gap, \emph{anchored nonamenability}, which is known to have consequences for the speed of random walk and for return probability estimates, a central topic in Kesten’s work. 

Much of what we do is inspired by the construction of probabilistic relaxations of \emph{lattices}.
Lattices in hyperbolic space (see \cite{Gelander} for an introduction, c.f.\ \cite{Gromov84,Cornulier}) are group theoretic constructions whose coarse geometric properties mirror the underlying space.  As a purely geometric consequence, any real hyperbolic space $\mathbb{H}^d$ contains collections of points $S$ whose \emph{Delaunay graphs}, defined below in Section \ref{sec:delaunay}, have a linear isoperimetric inequality:
\begin{equation}\label{eq:delaunay_expansion}
    \inf_{\substack{V \subset S \\ |V| <\infty}} \frac{|\partial V|}{|V|} > 0,
  \end{equation}
where $\partial V$ denotes the edges of this Delaunay graph with exactly one vertex in $V.$  This follows from the nonamenability of isometry group of $S$ which in turn is a relatively simple consequence of the nonamenability of the ambient space (\cite{Gelander}).

However lattices are also quite special, for example there are not arbitrarily fine lattices in $\mathbb{H}^d$ (in $2$ dimensions this can be seen from applying Gauss--Bonnet -- in higher dimensions this is the Kazhdan--Margulis theorem).  There is a larger class of quasisymmetric analogues, such as aperiodic tilings and \emph{quasi-lattices}, which we define presently, of the hyperbolic plane which are good coarse approximations of $\mathbb{H}^d$ (see \cite{MargulisMozes,Penrose,BlockWeinberger}).  

A subset $S \subset \mathbb{H}^d$ is called \emph{coarsely dense} if there is a real number $c >0$ so that every $y \in \mathbb{H}^d$ is less than distance $c$ from a point of $S.$  On the other hand, a subset $S \subset \mathbb{H}^d$ is called \emph{coarsely discrete} if for every $r > 0$ there is a $K_r$ so that
\[
  | S \cap B_r(y) | \leq K_r
\]
for every $y \in \mathbb{H}^d.$   
Define a \emph{quasi-lattice} $S \subset \mathbb{H}^d$ as a set 
which is both coarsely dense and coarsely discrete.  As a corollary of \cite[Theorem 3.1]{BlockWeinberger}, any quasi-lattice $S \subset \mathbb{H}^d$ is nonamenable in that \eqref{eq:delaunay_expansion} holds.

Another possible quasisymmetric generalization of a lattice is to replace $S$ by a random collection of points whose law is invariant under the isometries of $\mathbb{H}^d$ -- a \emph{stationary point process.}  As a principal motivating example, suppose that we consider a Poisson point process $\mathcal{X}$ with invariant intensity measure.  Will this too be nonamenable in the sense of \eqref{eq:delaunay_expansion}?

This is too much to ask.  Because many Poisson points could arrive in any small ball, and these points can be arranged in any way, there will be
somewhere in the Delaunay graph of $\mathcal{X}$ a finite subgraph isomorphic to any neighborhood in a $d$--dimensional Euclidean lattice.

However, there are relaxations of \eqref{eq:delaunay_expansion} that may still be satisfied by the Delaunay graph of $\mathcal{X}.$  One natural such condition is \emph{anchored nonamenability}.  Say that a connected graph $G$ has \emph{positive anchored expansion} if for some fixed vertex $\rho$ in $G,$
\begin{equation}\label{eq:anchored}
  \inf_{\substack{V \ni \rho \\ V \text{ connected }}} \frac{|\partial_{\text{out}} V|}{|V|} > 0,  
\end{equation}
and where $\partial_{\text{out}}V$ is the subset of $V$ with a neighbor outside of $V.$  Say that a graph has anchored nonamenability if it has positive anchored expansion. Anchored nonamenability yields some attractive corollaries for random walks and other random processes (see \cite{Thomassen, Virag, HSS} -- note that in the weak form that we have formulated it, some standard corollaries may only hold under additional control on the degrees of the vertices).  For unimodular random graphs embedded in $\mathbb{H}^d$, almost sure anchored nonamenability implies that random walk has positive hyperbolic speed almost surely.\footnote{For unimodular random graphs, anchored nonamenability implies the weaker invariant nonamenability.  When the degree of root of such an invariantly nonamenable graph has finite expectation, random walk will escape from the root with linear rate in any (stationary) pseudometric on the graph in which balls grow subexponentially.  In particular, random walk will have positive speed almost surely in the hyperbolic metric when run on a stationary point process (c.f.\ the proof of \cite[Corollary 1.10]{Paquette01} or \cite{AHNR1}).}

We will give a general criterion for any subset $S \subset \mathbb{H}^d$ to be \emph{anchored nonamenable}.  Moreover, this criterion will be satisfied for any stationary Poisson process on $\mathbb{H}^d.$

\subsection{Voronoi and Delaunay complexes} \label{sec:delaunay}

We recall here the definitions of the Voronoi tessellation and Delaunay complex for a discrete set of points $S \subset \Hd.$  The \emph{Voronoi tessellation} is a partition of $\Hd$ into cells, each of which is a hyperbolic polyhedron containing exactly one point of $S,$ its \emph{nucleus}.  For a point $p \in S,$ the Voronoi cell is given by
\[
\left\{ z \in \Htwo : \dH(z,p_0) = \min_{p \in \PP} \dH(z,p)\right\},
\]
where $\dH$ denotes distance in the hyperbolic metric.  

The \emph{Delaunay complex} is a dual cell complex to the Voronoi tessellation: a collection of points $U \subset S$ having $|U| = d+1$ is a $d$-dimensional simplex if and only if there exists an open ball $V$ disjoint from $S$ so that $\partial V \supset U.$  When the points $S$ are in generic position, meaning there is no sphere containing more than $d+1$ points of $S,$ then the Delaunay complex canonically embeds into $\Hd$ by representing each $d$-dimensional simplex by a hyperbolic simplex with vertices from $S.$ 
The Delaunay graph (the $1$--skeleton of the Delaunay complex) can be succinctly represented by the edge set on $S$ in which two vertices $x$ and $y$ are adjacent if and only if there is an open ball $V$ disjoint from $S$ with closure containing $x$ and $y.$

\begin{figure}
  \begin{subfigure}[t]{0.30\textwidth}
    \includegraphics[width=\textwidth]{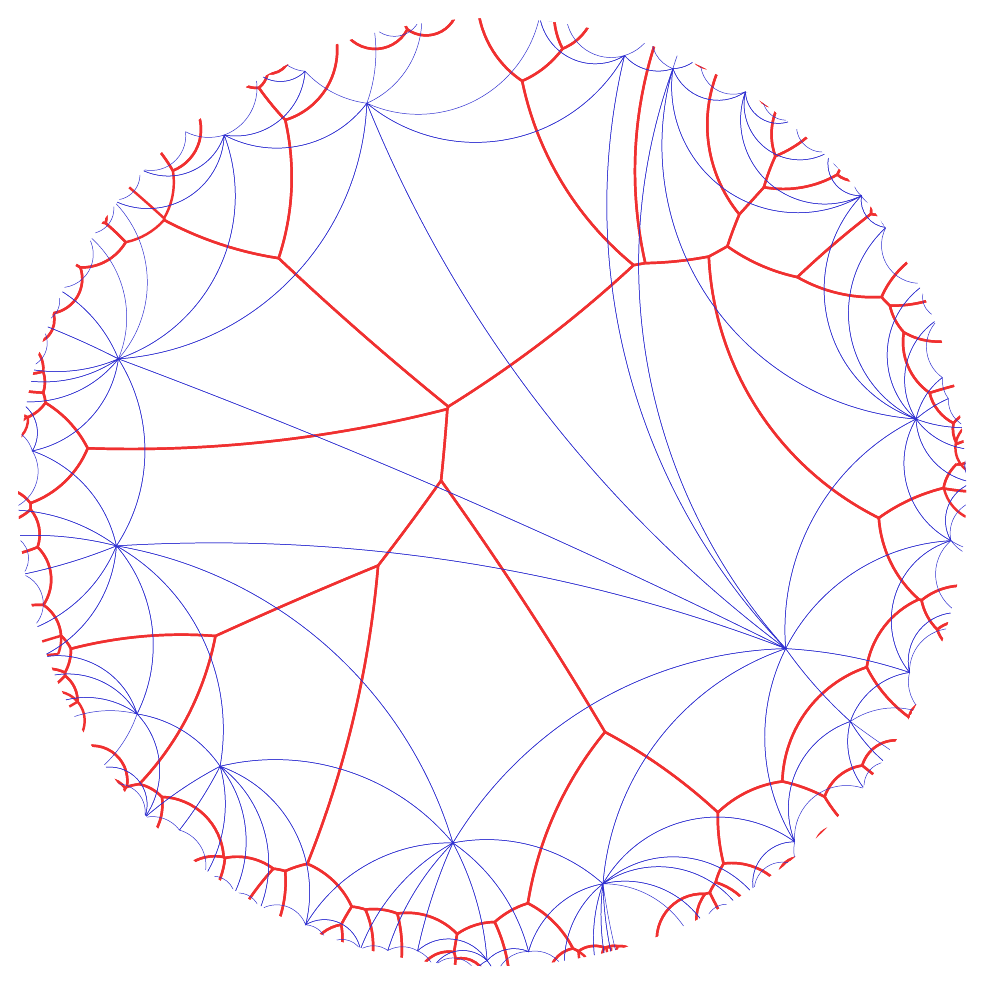}
    \caption{Berezin $s=2.44.$}
  \end{subfigure}
  \begin{subfigure}[t]{0.30\textwidth}
    \includegraphics[width=\textwidth]{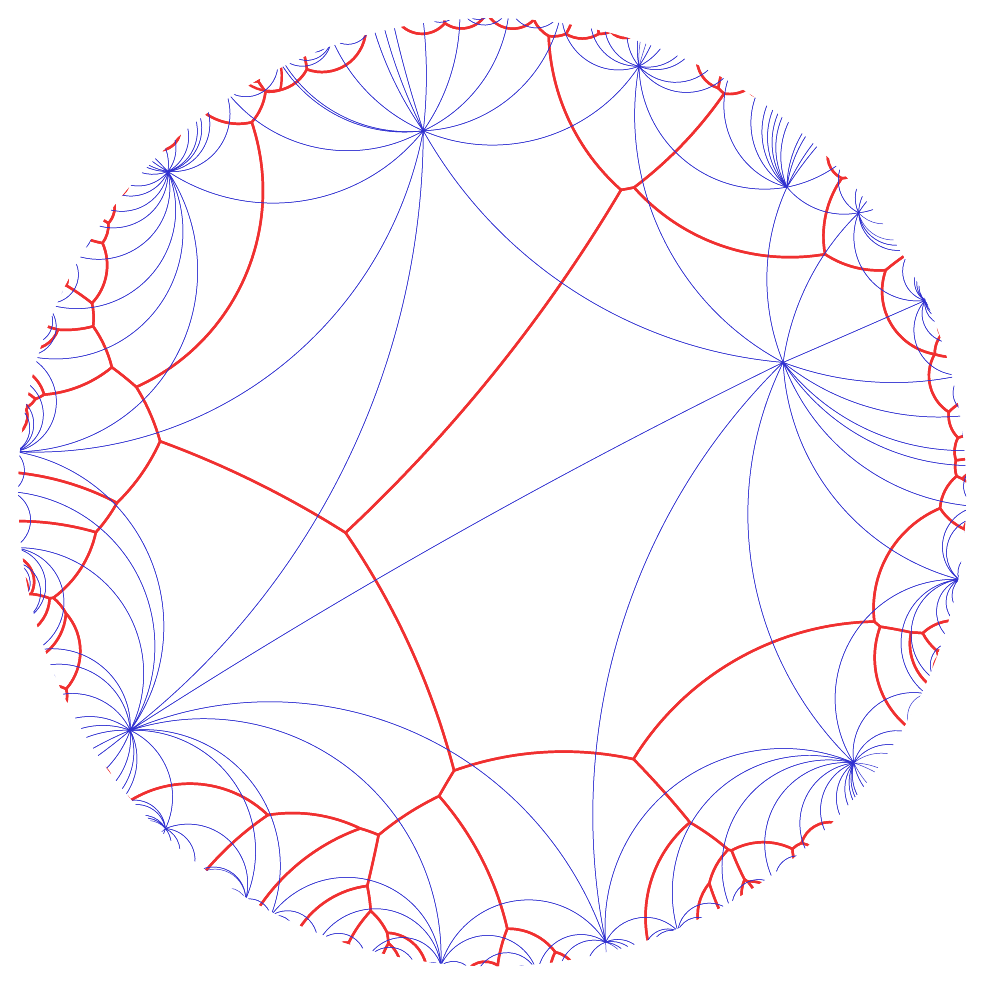}
    \caption{Hyperbolic GAF zeros.} 
  \end{subfigure}
  \begin{subfigure}[t]{0.30\textwidth}
    \includegraphics[width=\textwidth]{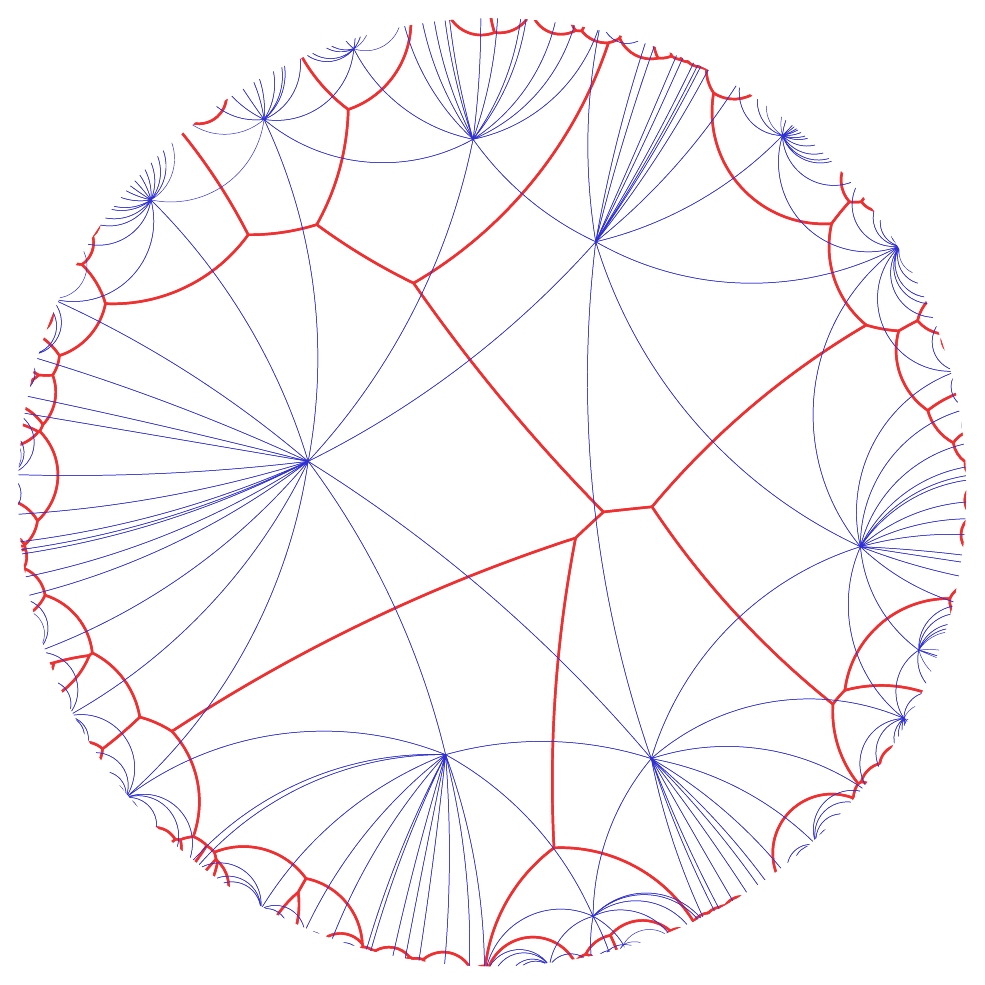}
    \caption{Poisson.} 
  \end{subfigure}
  \caption{Three simulations of Voronoi (red) and Delaunay (blue) in a ball (Euclidean radius $0.99$) in the Poincar\'e model.  Nuclei are the vertices of the blue complex.  The intensities of Berezin (see the definition in Section 5) and Poisson are chosen to match the GAF zeros. See also Figure \ref{fig:berezin}.
}
  \label{fig:comparison}
\end{figure}

\subsection{Main result} \label{sec:main}

To formulate our result, we will need to make use of a fixed cocompact lattice $\Lattice$ (see for example~\cite[Theorem 18.45]{WitteMorris}).
These guarantee the existence of a Voronoi tessellation of $\Hd$ into compact, convex polyhedra so that the isometries of $\Hd$ act transitively on the set of polyhedra.  As we will typically need to refer to the cells of the tessellation, we will use $\Lattice$ to denote the collection of these polyhedra.  

We let $A \in \Lattice$ be any fixed cell.  Say that two cells in $\Lattice$ are adjacent if their intersection has codimension-$1.$  Let $\mathcal{L}_A$ be the set of connected finite subsets of $\Lattice$ containing $A \in \Lattice.$
We say that a set of points $\PP \subset \mathbb{H}^d$ has \emph{anchored bounded density}
if
\begin{equation} \label{eq:bmd}
    \sup_{ J \in \mathcal{L}_A} \frac{|\PP \cap (\cup_{B \in J} B)|}{|J|} < \infty,
\end{equation}
Conversely, we say that a set of points $\PP \subset \mathbb{H}^d$ has \emph{anchored bounded vacancy} 
if there is an $M>0$ so that
\begin{equation} \label{eq:bmc}
    \inf_{ J \in \mathcal{L}_A} \frac{|\{B \in \Lattice: \dH(B,J) \leq M, |B \cap \PP| > 0\}|}{|J|} > 0.
\end{equation}

Note both of these definitions are independent of the choice of the base cell $A$ as well as the choice of cocompact lattice $\Lattice.$  The lattice $\Lattice$ could just well be replaced by the Voronoi tessellation of any quasi--lattice as well.  We further observe that any collection of points $\PP \subset \mathbb{H}^d$ that has both anchored bounded density and vacancy still have both of these properties after \emph{thinning}, i.e.\ randomly and independently removing each point of $\PP$ with any fixed probability $p > 0.$

Our main theorem is:
\begin{theorem}\label{thm:bmo_lattice}
  Suppose that $\PP \subset \mathbb{H}^d$ has both anchored bounded density and anchored bounded vacancy, then the Delaunay graph on $\PP$ has anchored nonamenability.
\end{theorem}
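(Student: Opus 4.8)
The plan is to transfer the linear isoperimetric inequality of $\Hd$ -- equivalently, the strong nonamenability of the tessellation graph attached to the cocompact lattice $\Lattice$ -- onto the Delaunay graph of $\PP$, using anchored bounded density \eqref{eq:bmd} to prevent the Delaunay graph from compressing volume and anchored bounded vacancy \eqref{eq:bmc} to prevent it from stretching volume. Fix as base cell $A$ a polyhedron of $\Lattice$ containing a point of $\PP$: this is possible after recentering $\Lattice$, since \eqref{eq:bmc} applied to $J=\{A\}$ produces an occupied cell at bounded distance from any prescribed one; let $\rho\in\PP\cap A$ be the anchor. It suffices to produce $c>0$ so that every finite connected $V\ni\rho$ in the Delaunay graph satisfies $|\partial_{\text{out}}V|\ge c|V|$, and we argue by contraposition, manufacturing from a bad $V$ a set in $\mathcal{L}_A$ that violates \eqref{eq:bmd} or \eqref{eq:bmc}. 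The first ingredient: the adjacency graph $\mathcal{G}_\Lattice$ of the tessellation $\Lattice$ (cells adjacent when they share a codimension-one face) is the Delaunay graph of a lattice orbit, hence a bounded-degree graph which is strongly nonamenable by the discussion around \eqref{eq:delaunay_expansion}; in particular there is $h>0$ with $|\partial_{\text{in}}J|\ge h|J|$ for every finite $J\subset\Lattice$, where $\partial_{\text{in}}J$ is the set of cells of $J$ with a neighbour outside $J$.

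Second, given a finite connected $V\ni\rho$, associate a connected cell-set: let $J_V=\{B\in\Lattice : B\cap V\ne\varnothing\}$ and let $\tilde J_V$ be $J_V$ enlarged by all cells meeting the hyperbolic geodesic $[x,y]$ for each Delaunay edge $xy$ with both endpoints in $V$; then $\tilde J_V\in\mathcal{L}_A$ (it is finite, connected, and contains $A$ since $\rho\in V\cap A$). Anchored bounded density applied to $\tilde J_V$ gives $|V|\le|\PP\cap\bigcup_{B\in\tilde J_V}B|\le C_1|\tilde J_V|$. In the other direction, a Delaunay edge $xy$ lies inside an empty ball of radius comparable to $\dH(x,y)$, so the cells added along its geodesic are, apart from $O(1)$ near each endpoint, at distance $>M$ from every occupied cell; estimating the occupied cells within distance $M$ of $\tilde J_V$ this way and invoking \eqref{eq:bmc} yields $|\tilde J_V|\le C_2|V|$, so $|V|\asymp|\tilde J_V|$. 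Nonamenability of $\mathcal{G}_\Lattice$ then gives a linear lower bound on the inner cell-boundary of $\tilde J_V$ in terms of $|V|$.

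Third, the geometric link between the cell-boundary and the Delaunay-boundary. Put $U_V=\bigcup_{p\in V}\operatorname{Vor}(p)$. Since Voronoi cells of Delaunay-adjacent points share a face, $U_V$ is connected, and $p\in V$ contributes a facet to the topological boundary $\partial U_V$ precisely when $p$ has a Delaunay neighbour outside $V$; thus $\partial U_V\subset\bigcup_{p\in\partial_{\text{out}}V}\partial\operatorname{Vor}(p)$, and $\partial_{\text{out}}V$ is exactly the set of points whose Voronoi cell is exposed on $\partial U_V$. Anchored bounded density, used as in the previous step, forces $\Vol[\Hd](U_V)\ge c_1|V|$ -- a cluster of close points of $\PP$ is paid for, via \eqref{eq:bmd}, by a proportionally long excursion of $V$ back toward $A$, whose Voronoi cells supply the missing volume, while the geodesic tubes of the previous step are essentially absorbed into $U_V$. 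The linear isoperimetric inequality in $\Hd$ then gives $\operatorname{Vol}_{d-1}(\partial U_V)\ge c_d\,\Vol[\Hd](U_V)\ge c_dc_1|V|$.

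Finally -- and this is the heart of the matter -- one must convert the $(d-1)$-volume estimate on $\partial U_V$ into a lower bound on the \emph{cardinality} $|\partial_{\text{out}}V|$, and this is where \eqref{eq:bmc} is used a second time and where the island functionals $\islands[i]$, $\isol[i][\PP]$, and the critical scale $\aec$ enter. The obstruction is that a single exposed Voronoi cell can carry enormous surface area -- this happens exactly when the cell of the corresponding point abuts a large connected vacant cell-set, a vacant island -- and hence the naive inequality $\operatorname{Vol}_{d-1}(\partial U_V)\le C|\partial_{\text{out}}V|$ is false in general; the same phenomenon is what makes the degrees in the Delaunay graph unbounded, since a point abutting a large vacant island has a large Voronoi cell and many Delaunay neighbours. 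The plan is to stratify the points of $\partial_{\text{out}}V$ by the size $i$ of the largest vacant island abutting their cell: for the part of the stratification below the critical scale $\aec$ the exposed Voronoi cells have bounded size, so surface area and cardinality are comparable and the isoperimetric estimate of the previous step closes the argument; for the part above $\aec$ one argues directly, bounding the number of possible $i$-islands $\islands[i]$ and their $\PP$-deficiency $\isol[i][\PP]$ and applying \eqref{eq:bmc} to $\tilde J_V$ enlarged by the offending islands, which shows that $V$ cannot wrap around a large vacant region while touching it in few places -- so these strata are essentially empty. Combining the two regimes yields $|\partial_{\text{out}}V|\ge c|V|$ uniformly over $V\ni\rho$. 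The main obstacle is making this dichotomy quantitative and uniform in $V$: pinning down the correct threshold $\aec$ at which the combinatorial growth of $\islands[i]$ is overtaken by the vacancy constraint \eqref{eq:bmc}, and turning the qualitative statement ``a set of vanishing anchored expansion cannot hug a large vacant region'' into the stated inequality, is the technical core of the proof.
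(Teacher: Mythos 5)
Your first two steps (the cell-set $\tilde J_V$ with $|\tilde J_V|\asymp|V|$, and the identification of $\partial_{\text{out}}V$ with the Voronoi cells exposed on the boundary of $U_V=\cup_{p\in V}\operatorname{Vor}(p)$) run parallel to the paper's Lemmas \ref{lem:connected} and \ref{lem:balls} and are sound in spirit. The genuine gap sits exactly where you place it: converting the lower bound on $\operatorname{Vol}_{d-1}(\partial U_V)$ into a lower bound on the \emph{cardinality} of $\partial_{\text{out}}V$. Your proposed repair --- stratify boundary points by the size of the abutting vacant island and kill the large strata using \eqref{eq:bmc} --- is only a program, and it is not clear it closes: \eqref{eq:bmc} is an anchored, averaged statement over connected cell-sets containing $A$, and it does not by itself rule out that a handful of points of $\partial_{\text{out}}V$ with enormous Voronoi facets carry essentially all of the surface area of $\partial U_V$; making the ``strata above the critical scale are essentially empty'' claim quantitative \emph{is} the missing proof, not a technicality. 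A secondary soft spot is the assertion $\Vol[\Hd](U_V)\ge c_1|V|$: bounded density gives $|\tilde J_V|\gtrsim|V|$, but $U_V$ only \emph{meets} those cells and need not contain a definite volume fraction of them, so the input to your isoperimetric step is itself unproved.

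The paper avoids the surface-area detour entirely, and that is the idea your proposal is missing. Anchored bounded vacancy first forces every Voronoi cell of $\PP$ to be bounded (Proposition \ref{prop:finitecells}), so the \emph{Voronoi boundary} $T$ of the finite set $S=V$ --- the points whose Voronoi cell in the complex generated by $V$ alone is unbounded --- is contained in $\partial_{\text{out}}V$. Proposition \ref{prop:convexhull} then covers a $D$-neighborhood of $V$ by $\convH(T)$ together with $R$-balls centered at points of $T$, and the decisive cardinality bound is Proposition \ref{prop:BE}: $\VolH(\convH(T))\le\alpha_d|T|$. Chaining these (Lemmas \ref{lm:greatlemma} and \ref{lem:vboundary}) gives $|\partial_{\text{out}}V|\ge|T|\ge\beta^{-1}\VolH\bigl(\cup_{p\in V}\HB(p,D)\bigr)\gtrsim|\mathcal{I}(V)|$ directly as a count, with no isoperimetric inequality and no control on the areas of individual cells; the two anchored hypotheses are then needed only to compare $|\mathcal{I}(V)|$ with $|V|$, essentially as in your step two. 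To salvage your route you would need a substitute for Proposition \ref{prop:BE}; as written, the proposal does not prove the theorem.
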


A Poisson point process satisfies both of these conditions easily (see Lemma \ref{lem:ppp}), from which we derive:
\begin{corollary}\label{cor:ppp_lattice}
  Let $\PPP$ be a Poisson point process on $\mathbb{H}^d$ with intensity measure which is a positive multiple $\lambda$ of hyperbolic volume measure. The Poisson--Delaunay graph with nuclei $\PPP$ is almost surely anchored nonamenable.
\end{corollary}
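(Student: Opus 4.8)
The plan is to deduce the corollary from Theorem~\ref{thm:bmo_lattice}, which is a deterministic statement about a point set, so it is enough to prove that almost every realization of $\PPP$ satisfies both anchored bounded density \eqref{eq:bmd} and anchored bounded vacancy \eqref{eq:bmc}. I would root the Delaunay graph at the $\PPP$-point nearest a fixed origin (or pass to the Palm measure); the choice is immaterial, since \eqref{eq:bmd}, \eqref{eq:bmc} and the conclusion of the theorem do not depend on it. Throughout I fix the cocompact lattice $\Lattice$, a base cell $A$, write $v_0 = \VolH(A)$ for the common cell volume, and let $\Delta$ be the (finite) maximal degree of the cell-adjacency graph. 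The only combinatorial ingredient is the standard ``lattice animal'' bound: the number of connected $J\subset\Lattice$ with $A\in J$ and $|J|=n$ is at most $C^n$ for some $C=C(\Delta)$, and in particular is finite for each fixed $n$.

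For \eqref{eq:bmd} I would argue as follows. Fix $J\in\mathcal{L}_A$ with $|J|=n$; then $|\PPP\cap\bigcup_{B\in J}B|$ is Poisson with mean $\lambda v_0 n$, so the Poisson upper tail gives, for $K>\lambda v_0$, a bound of the form $\Pr[\,|\PPP\cap\bigcup_{B\in J}B|>Kn\,]\le e^{-n\psi(K)}$ with $\psi(K)=K\log(K/(\lambda v_0))-K+\lambda v_0\to\infty$ as $K\to\infty$. Choosing $K$ with $\psi(K)>\log C+1$ and union bounding over the at most $C^n$ sets $J$ of size $n$ makes the probability that any of them violates the bound at most $e^{-n}$; Borel--Cantelli, together with the trivial a.s.\ finiteness of the maximum over the finitely many $J$ of each fixed small size, then yields $\sup_{J\in\mathcal{L}_A}|\PPP\cap\bigcup_{B\in J}B|/|J|<\infty$ almost surely.

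For \eqref{eq:bmc} the same scheme works, but the slack parameter $M$ must be taken large, since a single cell is empty with probability $q:=e^{-\lambda v_0}$, which may be close to $1$ when $\lambda$ is small. Given $J\in\mathcal{L}_A$ with $|J|=n$, I would greedily pull out cells $B_1,\dots,B_m\in J$ that are pairwise at hyperbolic distance $>2M+2\diamH(A)$; since a ball of bounded radius meets only a bounded number of cells, this forces $m\ge c(M)\,n$ for some $c(M)>0$. The $M$-balls about the $B_i$ are then disjoint, so the events $E_i=\{$some cell within distance $M$ of $B_i$ contains a point of $\PPP\}$ are independent with $\Pr[E_i^c]\le q^{c_M}$, where $c_M$ is the least number of cells contained in a radius-$M$ ball, so that $c_M\to\infty$ and $q^{c_M}\to0$ as $M\to\infty$. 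Since $|\{B\in\Lattice:\dH(B,J)\le M,\ |B\cap\PPP|>0\}|\ge\sum_{i=1}^m\one[E_i]$, a Chernoff bound gives $\Pr[\sum_{i=1}^m\one[E_i]<m/2]\le e^{-m\,I(1/2,q^{c_M})}$, with the binomial rate $I(1/2,q^{c_M})\to\infty$ as $M\to\infty$. Fixing $M$ so large that $c(M)\,I(1/2,q^{c_M})>\log C+1$, a union bound over size-$n$ sets $J$ and Borel--Cantelli (plus finiteness at each fixed size) give $\inf_{J\in\mathcal{L}_A}|\{B:\dH(B,J)\le M,\ |B\cap\PPP|>0\}|/|J|>0$ almost surely.

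Combining the last two steps, almost every realization of $\PPP$ satisfies the hypotheses of Theorem~\ref{thm:bmo_lattice}, and the corollary follows. I expect the only delicate point to be the calibration of constants: the entropy $C^n$ of ranging over all connected cell-sets of size $n$ containing $A$ must be beaten by an exponentially small single-set bad-event probability, which for the vacancy estimate is precisely what forces $M$ to grow as $\lambda$ decreases --- and this is the sense in which the ``large region'' formulation of the hypotheses, rather than any fixed-scale version, is what survives the union bound.
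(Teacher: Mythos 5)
Your reduction to Theorem~\ref{thm:bmo_lattice} and your proof of anchored bounded density are correct and essentially identical to the paper's (Lemma~\ref{lem:ppp}): Poisson upper tail at scale $n$, union bound over the $\leq \Delta^{n-1}$ lattice animals of size $n$, Borel--Cantelli. The vacancy half, however, has a genuine quantitative gap, and it sits exactly where you flagged the "delicate calibration." Your sparsification step keeps only $m \geq c(M)n$ cells that are pairwise $(2M+2D)$-separated, and in $\mathbb{H}^d$ a ball of radius $2M+2D$ meets of order $e^{2(d-1)M}$ cells, so for ball-like animals $J$ the greedy bound gives only $c(M) \asymp e^{-2(d-1)M}$. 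On the other hand each $E_i$ fails with probability $q^{c_M}$ with $c_M \asymp e^{(d-1)M}$ cells in an $M$-ball, so $I(1/2,q^{c_M}) \asymp \lambda v_0 e^{(d-1)M}$. The exponent you need to beat $\log C$ is therefore
\begin{equation*}
c(M)\,I\bigl(1/2,q^{c_M}\bigr) \;\asymp\; e^{-2(d-1)M}\cdot \lambda v_0\, e^{(d-1)M} \;=\; \lambda v_0\, e^{-(d-1)M},
\end{equation*}
which \emph{decreases} to $0$ as $M\to\infty$: separation costs an exponential factor at rate $2(d-1)M$ while independence only buys one back at rate $(d-1)M$. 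Even with an optimal (rather than greedy) packing the product is $O(\lambda)$ uniformly in $M$, so for small $\lambda$ there is no choice of $M$ with $c(M)\,I(1/2,q^{c_M})>\log C+1$, and the union bound over size-$n$ animals cannot be closed. (For large $\lambda$ your calibration does go through with $M$ bounded, but the corollary is claimed for all $\lambda>0$.)

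The repair is to not sparsify at all. The events $\{|B\cap\PPP|>0\}$ for \emph{distinct} cells $B$ are already independent, since the cells are disjoint and $\PPP$ is Poisson; so $|\{B:\dA(B,J)\leq M,\ |B\cap\PPP|>0\}|$ is exactly $\Binomial(X,1-e^{-\lambda v_0})$ with $X=|\{B:\dA(B,J)\leq M\}|$. The geometric input you are missing is the paper's Lemma~\ref{lem:boundary}: because the cell-adjacency graph of a cocompact tessellation of $\mathbb{H}^d$ is nonamenable, the $M$-neighborhood of any connected $J$ satisfies $X\geq R(M)|J|$ with $R(M)\to\infty$ as $M\to\infty$. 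Chernoff then gives a failure probability $e^{-\delta X(1-e^{-\lambda v_0})}\leq e^{-\delta R(M)(1-e^{-\lambda v_0})n}$, and the prefactor $R(M)$ can be made as large as needed to dominate $\log\Delta+1$ for any fixed $\lambda>0$. In short: the exponential volume growth that defeats your separated-representatives scheme is precisely the expansion that, used directly on all cells of the $M$-neighborhood, makes the union bound close.
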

\noindent It is also easy to verify that any \emph{determinantal point process} with invariant intensity measure has anchored bounded density and anchored bounded vacancy (using negative association and standard tail properties of these properties, see Proposition \ref{prop:generalities}, c.f.\ \cite[Theorem 6.5]{Lyons}).  We will give a construction of a family of stationary determinantal points processes in all dimensions of real hyperbolic spaces in Section \ref{sec:hbr}.

\subsection{Controlling the degrees}

The isoperimetric constant we use in \eqref{eq:anchored} is not the one which is typically useful for finding corollaries of random walk (c.f.\ \cite{Virag,BenjaminiPaquettePfeffer01}).  If we define for a finite set of vertices $V$ in graph $G$, $\Vol(V)$ as the sum of the degrees of the vertices in $V,$ then we can say that a connected graph $G$ has \emph{positive strong anchored expansion} if
\begin{equation}\label{eq:sanchored}
  \inf_{\substack{V \ni \rho \\ V \text{ connected }}} \frac{|\partial_{\text{out}} V|}{\Vol(V)} > 0,  
\end{equation}
and say that $G$ has \emph{strong anchored nonamenability.}
To derive this from \eqref{eq:anchored}, it suffices to know that $\Vol(V)$ is comparable to $|V|.$  In fact it suffices to show that the number of internal edges $\Vol[V](V)/2$ is comparable to $|V|$ in the sense that
\begin{equation}\label{eq:degree}
  \sup_{\substack{V \ni \rho \\ V \text{ connected }}} \frac{\Vol[V](V)}{|V|} < \infty.  
\end{equation}

We give sufficient conditions for \eqref{eq:degree} to hold.  
We again let $\Lattice$ be any cocompact lattice and $A \in \Lattice$ be any fixed cell.  
We say that for some $p \geq 1$ a set of points $\PP \subset \mathbb{H}^d$ has \emph{anchored $p$-exponential density}
if
\begin{equation} \label{eq:bpmd}
    \sup_{ J \in \mathcal{L}_A} \frac{|\PP \cap (\cup_{B \in J} B)|^p}{|J|} < \infty.
\end{equation}
Note that for any $p > 1,$ this is strictly stronger than anchored bounded density \eqref{eq:bmd}.

Likewise, we say that for any $p>0,$ $\PP \subset \mathbb{H}^d$ has \emph{anchored $p$-exponential vacancy} if
\begin{equation} \label{eq:bpmv}
    \sup_{ J \in \mathcal{L}_A} \frac{
      \sum_{X \in J}e^{p(d-1)\dH(\PP,X)}
  }{|J|} < \infty.
\end{equation}
This condition implies anchored bounded vacancy for any $p > 0.$  We note that in $\mathbb{H}^d,$ the volume of a ball of radius $r$ has exponential growth rate $e^{r(d-1)},$ for which reason we have included the constant $(d-1).$

These conditions together imply that the degree sequence is in control.
\begin{theorem}\label{thm:bmo_lattice2}
  Suppose that for some $p,q > 0$ with $\frac{1}{p} + \frac{1}{q} < \tfrac12,$ $\PP \subset \mathbb{H}^d$ has both anchored $p$-exponential density and anchored $q$-exponential vacancy, then the Delaunay graph on $\PP$ has strong anchored nonamenability.
\end{theorem}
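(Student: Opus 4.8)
The plan is to establish the degree estimate \eqref{eq:degree}, which, as the discussion preceding the theorem explains, upgrades the anchored nonamenability of Theorem~\ref{thm:bmo_lattice} to \eqref{eq:sanchored}: anchored $p$-exponential density and anchored $q$-exponential vacancy imply their non-exponential counterparts, so \eqref{eq:anchored} is already in hand, and it remains to bound $\Vol[V](V)$, for which it suffices to show $\sum_{x\in V}\deg(x)\le C|V|$ for every finite connected $V\ni\rho$ in the Delaunay graph ($\deg$ the Delaunay degree). The geometric engine is the Voronoi circumradius $R_x:=\sup\{\dH(x,z):z\in\mathrm{Vor}(x)\}$, attained at a point $z_x$ with $\dH(x,z_x)=R_x$. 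Two facts: (i) since $z_x\in\mathrm{Vor}(x)$, the open ball $B_{R_x}(z_x)$ misses $\PP$ (and, sliding the center along $[x,z_x]$, one gets empty balls of every radius up to $R_x$); (ii) any Delaunay neighbour $y$ of $x$ is an endpoint of an empty ball whose closure passes through $x$ and $y$, so that ball's center lies in $\mathrm{Vor}(x)$, whence $\dH(x,y)\le 2R_x$ and $\deg(x)\le|\PP\cap B_{2R_x}(x)|$. (That $R_x<\infty$, i.e.\ $\mathrm{Vor}(x)$ is bounded, is itself forced by anchored bounded vacancy.)

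Now translate through the cocompact lattice $\Lattice$, using that a ball of radius $r$ meets $O(e^{(d-1)r})$ cells. Anchored $p$-exponential density then gives $\deg(x)\le|\PP\cap B_{2R_x}(x)|\le C\exp(\tfrac{2(d-1)}{p}R_x)$. The complementary estimate is a tail bound on the circumradii,
\[
  \#\{x\in V:\ R_x>t\}\ \le\ C\,|V|\,e^{-(d-1)(q-1/p)\,t}\qquad(t\ \text{large}),
\]
obtained because a vertex with $R_x>t$ plants, on $[x,z_x]$, a cell $Z$ with $\dH(\PP,Z)\ge t-O(1)$ within distance $R_x+O(1)$ of the cell containing $x$; anchored $q$-exponential vacancy makes such deep-vacancy cells exponentially rare, while anchored $p$-exponential density caps by $C\exp(\tfrac{d-1}{p}\dH(\PP,Z))$ the number of vertices that can plant their $Z$ in a given cell (those vertices all lie among the $\PP$-points in a ball of radius $\dH(\PP,Z)+O(1)$ about $Z$). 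Granting both, split $V=\bigsqcup_k V_k$ with $V_k=\{x:2^k<R_x\le 2^{k+1}\}$ plus a bounded-scale remainder contributing $O(|V|)$ (bounded $R_x$ forces $\deg(x)=O(1)$); then
\[
  \sum_{x\in V_k}\deg(x)\ \le\ |V_k|\cdot C\exp\!\Big(\tfrac{(d-1)2^{k+2}}{p}\Big)\ \le\ C\,|V|\exp\!\Big((d-1)2^k\big(\tfrac5p-q\big)\Big),
\]
which is summable over $k$ exactly when $q>\tfrac5p$. Since $\tfrac1p+\tfrac1q<\tfrac12$ forces $\sqrt{pq}>4$ by AM--GM, $pq>16>5$, the series converges, and summing gives $\sum_{x\in V}\deg(x)\le C|V|$, hence \eqref{eq:degree} and \eqref{eq:sanchored}.

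The main obstacle is the circumradius tail bound, which splits into two intertwined parts. The first is the multiplicity count used above: the empty witness balls of distinct large-$R_x$ vertices really do overlap, so one must bound how many such vertices can funnel through a single deep-vacancy cell; this is the step producing the factor $e^{(d-1)\dH(\PP,Z)/p}$, and it is the reason the hypothesis demands $pq$ large rather than merely $p,q>2$. The second, more of a bookkeeping nuisance, is that anchored density and vacancy are stated for connected test sets containing the fixed base cell $A$, so to invoke them near $V$ one must control the size of a connected hull of the occupied cells $\{A_x:x\in V\}$ — formed by threading geodesic cell-chains along a Delaunay spanning tree of $V$ — together with the witness cells $Z$; a priori this hull has size governed by $\sum_{x\in V}R_x$, which is precisely what the tail bound is meant to tame. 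I plan to break the circularity by an absorption step: applying anchored $q$-exponential vacancy to one such hull yields an inequality of the shape $\sum_{x\in V}e^{cR_x}\le C|V|+C'\sum_{x\in V}R_x$ whose left side dominates $C'\sum_{x\in V}R_x$ once the sum is restricted to $R_x$ above a large constant, so the linear term absorbs, $\sum_{x\in V}R_x\le C|V|$, and then all hull sizes are $O(|V|)$ and the estimates above go through unencumbered. (Delaunay edges of bounded length need no separate treatment, being already absorbed into the bounded-scale remainder.)
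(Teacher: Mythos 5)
Your route is genuinely different from the paper's: you work vertex-by-vertex via the Voronoi circumradius $R_x$, a tail bound on $\#\{x:R_x>t\}$, and a dyadic summation, whereas the paper's proof (Proposition \ref{prop:degrees}) never produces a per-vertex degree bound at all --- it counts edges cell-by-cell through the certifying balls, bounds the edges certified from a cell $B$ by $\bigl(\sum_{X\in\Delta B\cap J}|X\cap\PP|\bigr)^2$, and then applies H\"older to the resulting sum over the single hull $J$, invoking each anchored hypothesis only on that one hull.

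The gap in your argument lies exactly where you apply anchored $p$-exponential density \emph{locally}. The hypothesis \eqref{eq:bpmd} only controls $|\PP\cap(\cup_{B\in J}B)|$ for connected $J$ \emph{containing the base cell} $A$. To bound $|\PP\cap \HB(x,2R_x)|$ you must therefore use a hull that both covers the ball and connects back to $A$; its cardinality is at least of order $\dA(A,A_x)$, which for a connected Delaunay set $V\ni\rho$ can be as large as $|V|$. The honest conclusion is $\deg(x)\le C\bigl(|V|+e^{2(d-1)R_x}\bigr)^{1/p}$, not $Ce^{2(d-1)R_x/p}$, and summing the unavoidable $|V|^{1/p}$ term over $x\in V$ yields $|V|^{1+1/p}$, which is superlinear, so \eqref{eq:degree} does not follow. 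The same defect infects the multiplicity count in your circumradius tail bound (how many vertices plant a given deep-vacancy cell $Z$) and the claim that bounded $R_x$ forces $\deg(x)=O(1)$: none of the anchored hypotheses give a uniform local density bound, only an averaged one over anchored hulls, and a single cell at lattice distance $n$ from $A$ may contain on the order of $n^{1/p}$ points consistently with \eqref{eq:bpmd}. Your absorption step repairs the hull-size circularity for the \emph{vacancy} application (giving $\sum_x R_x\le C|V|$), but it does not localize the \emph{density} condition, which is where the argument breaks. To salvage the approach you would need to replace every pointwise density application by an aggregated one over the single hull --- at which point you are essentially reconstructing the paper's cell-indexed H\"older argument.
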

\noindent Indeed, we in fact show that under these two conditions, \eqref{eq:degree} holds. 

It is simple to check that some translated lattices satisfy the conditions of Theorem \ref{thm:bmo_lattice2}.  Consider taking a fixed lattice $\Gamma$ which we identify with a collection of points in $\mathbb{H}^d.$  
Let $\left\{ \phi_x : x\in \Gamma \right\}$ be the canonical isometric involutions of $\mathbb{H}^d$ interchanging $x$ and $0.$ 
Then define an iid family of automorphisms of $\mathbb{H}^d$ $\left\{ \psi_x : x \in \Gamma \right\}$ and define the \emph{translated lattice} $\hat{\Gamma} = \left\{ \phi_x(\psi_x(\phi_x(x))) : x \in \Gamma \right\}.$  Provided that $\dH(0,\psi_x(0))$ has sufficiently fast tail decay, then $\hat{\Gamma}$ will satisfy both anchored $p$-exponential density and bound $p$-exponential vacancy for any $p > 1.$  We do not go further into details.

The Poisson process does not satisfy this pair of conditions for any admissible pair $p,q.$  The Poisson process will have anchored $1$-exponential density but not anchored $p$-exponential density for any $p >1.$  Similarly, it has anchored $p$-exponential vacancy for $p < 1$ and not for $p > 1.$  We do not know if there is any stationary determinantal process which satisfy these conditions. 
\begin{question}
  Is there a stationary determinantal point process satisfying anchored $p$-exponential vacancy for any $p > 1$? Is there one satisfying anchored $p$-exponential density for any $p > 1$?
\end{question}
\noindent See Section \ref{sec:hbr} for a greatly expanded discussion of stationary determinantal point processes on real hyperbolic space and related questions.

%

\subsection{Related work and discussion}

\subsubsection*{Poisson Voronoi tessellations}

We give a criterion that shows that the Poisson--Delaunay graph almost surely is anchored nonamenable in any dimension of hyperbolic space. In \cite{BenjaminiPaquettePfeffer01}, it is shown that in dimension $2,$ the Poisson--Delaunay graph has a slightly stronger version of anchored nonamenability almost surely, but the arguments were highly specific both to the plane and $\PP=\PPP.$  Corollary \ref{cor:ppp_lattice} answers Conjecture 1.8 in that paper.  In \cite{Paquette01}, it is shown that the Delaunay graph of any stationary point process $\mathcal{S} \subset \mathbb{H}^d$ has a type of nonamenability in law, and that argument extends to other nonpositively curved spaces; it is also shown there that simple random walk on the Delaunay graph has positive hyperbolic speed.  For the case of the \emph{Poisson} Delaunay graph, it is shown there that random walk has positive \emph{graph} speed.  

Poisson--Voronoi tessellations have been the setting for interesting percolation theory and other invariant probability, starting with \cite{BenjaminiSchramm01}.  Very recently, it was established that in the $\lambda\to\infty$ limit, the critical value for Bernoulli site percolation tends to $\tfrac 12$ \cite{HansenMuller}, consistent with percolation on the Euclidean Poisson--Voronoi tessellation.  Indeed one attractive feature of the Poisson--Voronoi tessellation on $\PPP$ is that the parameter $\lambda$ is a continuously tunable (inverse) curvature parameter.

\subsubsection*{Other questions}

We have given a sufficient condition for a collection of points $\PP$ to have an anchored nonamenable Delaunay graph.  Having too many points of $\PP$ in a small region would seem to necessarily contradict that $\PP$ is anchored nonamenable, as Euclidean geometry would then take over.  On the other hand, having points at great distance would only seem to increase the boundary size of $\PP,$ suggesting that the anchored bounded vacancy is possibly unnecessary.
\begin{question}
  Suppose $\PP$ is a collection of points whose Voronoi complex has all bounded cells and which is anchored nonamenable.  Does it follow that $\PP$ has anchored bounded density?  Conversely, is there a $\PP \subset \mathbb{H}^d$ whose Voronoi complex has all bounded cells but which does not have anchored bounded vacancy?
\end{question}

The Voronoi tessellation is only one way to construct a tiling from a collection of points.  Particularly in $2$--dimensions, circle packing is a very powerful tool for describing embeddings of abstract tilings \cite{AHNR1,AHNR2,BT}.
\begin{question} 
  Suppose that an anchored nonamenable, bounded degree graph $G$ is the dual of a circle packing in $\mathbb{H}^d.$  Does it follow that the Delaunay graph on the vertices of $G$ is anchored nonamenable?
\end{question}

While there are curvature constraints on the types on the fineness of lattices in hyperbolic space, perhaps there are less symmetric sets without this constraint which still look symmetric in a coarse sense.  
\begin{question}
  Is there for any $c>0$ a collection of points $\PP \in \mathbb{H}^d$ so that $\PP$ is $c$-coarsely dense, and so that for each pair $\left\{ x,y \right\},$ there is a bilipschitz map $\phi$ preserving $\PP$ and interchanging $x$ and $y?$  If, conversely, we ask that the bilipschitz constant is sufficiently close to $1$, is $c$ bounded from below? 
\end{question}

\subsubsection*{Organization}

In Section \ref{sec:hvstructure} we give some basic structure of Voronoi complexes in hyperbolic space.  In Section \ref{sec:discretizations}, we prove Theorem \ref{thm:bmo_lattice}.  In Section \ref{sec:ppp}, we show Corollary \ref{cor:ppp_lattice}.  Finally in Section \ref{sec:hbr}, we introduce the Berezin point process and discuss stationary determinantal point processes in greater depth. 

\section{Convexity and the structure of hyperbolic Voronoi complexes}
\label{sec:hvstructure}

A subset $V \in \Htwo^d$ is convex if for any two points in $V,$ the hyperbolic geodesic connecting them is contained wholly in $V.$
For a set $S \subset \Htwo^d,$ the convex hull $\convH(S)$ is the intersection of all convex sets containing $V.$  
Much as in Euclidean space, it is also the intersection of hyperbolic half--spaces containing $S$.  Indeed, in the Klein--model of $\Htwo^d,$ in which a metric $\rho$ is placed on the unit ball of $\R^d$ so that $\rho$--geodesics are Euclidean geodesics, hyperbolically convex sets coincide with Euclidean convex sets.  Hence, for example, a convex hull of a finite set of points is still the  intersection of a family of finitely many half--spaces containing that set.  

Hence for any discrete set of points $S \subset \Htwo^d,$ the Voronoi complex with nuclei given by $S$ is, as in Euclidean space, a partition of the space into convex cells which overlap on lower dimensional faces. Additionally, as with Euclidean space, a Voronoi cell is unbounded if and only if its nucleus is an extreme point of the convex hull of $S.$  However, in hyperbolic space, horoballs (limits of balls with radius tending to infinity that are tangent to a point) and half--spaces do not coincide, and hence we are led to some additional characterizations of hyperbolic convex hulls, as elaborated in the next two lemmas.

\begin{lemma}
\label{lem:unbounded}
Let $S\subset\Htwo^d$ a discrete set of points, and let $p\in S$. Then, in the Voronoi complex corresponding to $S$, the cell with nucleus $p$ is unbounded if and only if there exists a horoball $V$ so that $V \cap S = \emptyset$ and so that $p \in \partial V.$
\end{lemma}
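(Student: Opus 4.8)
The plan is to reduce both implications to a single elementary observation: a point $z\in\Htwo^d$ lies in the Voronoi cell of $p$ if and only if the open ball $B_{\dH(z,p)}(z)$ — the ball centered at $z$ having $p$ on its bounding sphere — is disjoint from $S$, since membership in the cell of $p$ just says $\dH(z,q)\ge\dH(z,p)$ for every $q\in S$. With this in hand the lemma becomes a dictionary between two descriptions of one horoball. Fix a boundary point $\xi\in\partial\Htwo^d$ and let $\gamma\colon[0,\infty)\to\Htwo^d$ be the unit-speed geodesic ray with $\gamma(0)=p$ and $\gamma(t)\to\xi$. The one standard fact I will invoke is that the open horoball centered at $\xi$ with $p$ on its horosphere is precisely the increasing union $\bigcup_{t>0}B_t(\gamma(t))$; this holds because $t\mapsto\dH(y,\gamma(t))-t$ is non-increasing by the triangle inequality and tends to the Busemann function normalized so as to vanish at $p$, so $y$ lies in the horoball iff $\dH(y,\gamma(t))<t$ for all large $t$. (Any horoball $V$ with $p\in\partial V$ and $V\cap S=\emptyset$ is necessarily open, else $p\in V\cap S$.)

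For the ``if'' direction, let $V$ be a horoball with $V\cap S=\emptyset$ and $p\in\partial V$, let $\xi$ be its center at infinity, and let $\gamma$ be the ray from $p$ to $\xi$. By the identification above each $B_t(\gamma(t))$ is contained in $V$, hence misses $S$, and $p$ lies on its bounding sphere; thus $\dH(\gamma(t),q)\ge t=\dH(\gamma(t),p)$ for every $q\in S$, so $\gamma(t)$ lies in the Voronoi cell of $p$. Since $\dH(\gamma(t),p)=t\to\infty$, the cell is unbounded.

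For the ``only if'' direction, suppose the cell of $p$ is unbounded and choose points $z_n$ in it with $\dH(z_n,p)\to\infty$. The geodesic segments $[p,z_n]$ are $1$-Lipschitz curves issuing from $p$; as $\Htwo^d$ is proper, Arzelà--Ascoli together with a diagonal argument over ever longer initial segments produces a subsequence converging locally uniformly to a unit-speed geodesic ray $\gamma$ from $p$, which converges to some $\xi\in\partial\Htwo^d$. Because the Voronoi cell of $p$ is closed and convex — recorded earlier in this section — and each segment $[p,z_n]$ lies in it, the cell contains the entire limiting ray $\gamma$. Then every $\gamma(t)$ lies in the cell, forcing $\dH(\gamma(t),q)\ge t$ for all $q\in S$, i.e.\ $B_t(\gamma(t))\cap S=\emptyset$ for all $t>0$. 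Taking the union over $t$ and applying the identification above, the open horoball $V$ at $\xi$ with $p$ on its horosphere satisfies $V\cap S=\emptyset$ and $p\in\partial V$, as required.

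The part requiring genuine care is the ``only if'' direction: one must extract the limiting ray and check it does not leave the cell, which is exactly where properness of $\Htwo^d$ and the closedness and convexity of Voronoi cells (both from Section~\ref{sec:hvstructure}) enter. The identification of a nested union of balls through $p$ with a horoball is routine Hadamard-manifold/Busemann-function bookkeeping, and I will quote it rather than recompute it.
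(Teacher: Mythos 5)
Your proof is correct and follows essentially the same route as the paper's: both reduce the statement to the observation that a point of the cell at distance $t$ from $p$ certifies an empty open ball of radius $t$ with $p$ on its boundary, extract a limiting direction at $p$ by compactness, and realize the horoball as an increasing union (or limit) of such balls. The only cosmetic difference is that the paper extracts convergence of the normal vectors at $p$ of the empty balls directly, whereas you extract a limiting geodesic ray from points of the cell and use the cell's closedness and convexity to keep the ray inside it; both steps are sound.
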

\begin{proof}
	The cell with nucleus $p$ is unbounded if and only if there is a sequence of balls $\left\{ \HB(q_n,r_n) \right\}_{n=1}^\infty \subset \Hd$ with $r_n \to \infty$ so that $p \in \partial \HB(q_n,r_n)$ but $\HB(q_n,r_n) \cap S = \emptyset.$  By compactness, we may extract a subsequence $n_k$ of balls so that the normal vectors at $p$ converge to some $b$.  It then follows that the union $\cup_{k=1}^\infty \HB(q_{n_k},r_{n_k})$ is disjoint from $S.$  As this union of balls contains the horoball $V$ with $p \in \partial V$ and with normal vector $b$ at $p,$ the proof is complete.
\end{proof}

As a corollary, we see that:
\begin{proposition}\label{prop:finitecells}
For a discrete point set $\PP$ which has anchored bounded vacancy, every Voronoi cell in the complex with nuclei $\PP$ is finite.
\end{proposition}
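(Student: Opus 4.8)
The plan is to combine Lemma~\ref{lem:unbounded} with the fact that a horoball is too large a region to be avoided by a set obeying \eqref{eq:bmc}. By Lemma~\ref{lem:unbounded}, if the cell with nucleus $p \in \PP$ is unbounded there is a horoball $V$ with $V \cap \PP = \emptyset$ and $p \in \partial V$, so it suffices to prove the contrapositive: whenever some horoball $V$ is disjoint from $\PP$, the infimum in \eqref{eq:bmc} equals $0$ for every $M > 0$. Fix such a $V$ and an $M > 0$. Since $\Lattice$ is cocompact and locally finite, both $\delta := \sup_{B \in \Lattice}\diamH(B)$ and $K := \sup_{B' \in \Lattice}\bigl|\{B \in \Lattice : \dH(B,B') \le M\}\bigr|$ are finite, and the adjacency graph of $\Lattice$ is connected and quasi-isometric to $\Hd$.

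For each $R > 0$ I would construct a competitor $J^{(R)} \in \mathcal{L}_A$ consisting of a large ``blob'' deep inside $V$ together with a short ``tentacle'' joining it to the base cell $A$. Since $V$ is a horoball it contains the geodesic ball $\HB(o_R, R + M + 2\delta)$ for a suitable center $o_R$; taking $o_R$ on the geodesic ray issuing from $p$ perpendicular to $\partial V$ and pointing into $V$ gives $\dH(p, o_R) \le R + c$ with $c = c(V, M, \delta)$. Let $J_1^{(R)}$ be the collection of cells of $\Lattice$ meeting $\HB(o_R, R)$, which is a finite connected subcomplex, let $J_0^{(R)}$ be a shortest adjacency path in $\Lattice$ from $A$ to a cell of $J_1^{(R)}$, and set $J^{(R)} := J_0^{(R)} \cup J_1^{(R)} \in \mathcal{L}_A$. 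By the quasi-isometry, $|J_0^{(R)}| \le C(1 + \dH(A, o_R)) \le C(1 + \dH(A,p) + R + c)$, which is $O(R)$.

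Two elementary estimates then finish the argument. First, if a cell $B$ satisfies $\dH(B, J_1^{(R)}) \le M$, then, because $\bigcup_{X \in J_1^{(R)}} X \subseteq \HB(o_R, R + \delta)$ and every cell has diameter at most $\delta$, the whole cell $B$ lies in $\HB(o_R, R + M + 2\delta) \subseteq V$, so $B \cap \PP = \emptyset$. Hence every cell $B$ counted in the numerator of \eqref{eq:bmc} for $J = J^{(R)}$ must satisfy $\dH(B, J_0^{(R)}) \le M$, and there are at most $K\,|J_0^{(R)}| = O(R)$ such cells. Second, the cells of $J_1^{(R)}$ cover $\HB(o_R, R)$ and each has volume at most $\sup_{B \in \Lattice} \VolH(B) < \infty$, so $|J^{(R)}| \ge |J_1^{(R)}| \ge \VolH(\HB(o_R, R))/\sup_{B \in \Lattice} \VolH(B)$, which grows like $e^{(d-1)R}$. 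Therefore the ratio appearing in \eqref{eq:bmc} along $J^{(R)}$ is $O(R\,e^{-(d-1)R}) \to 0$ as $R \to \infty$, contradicting anchored bounded vacancy; consequently no Voronoi cell with nuclei $\PP$ is unbounded.

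The computations above are routine; the only point needing a little care is the bookkeeping between point-to-set distances and the cell diameter $\delta$, together with checking that $J_1^{(R)}$, and hence $J^{(R)}$, is a connected subcomplex. The genuine geometric content is simply that a horoball contains geodesic balls of every radius while the number of cells of a cocompact tessellation meeting a ball of radius $R$ grows exponentially in $R$, so the linear-sized tentacle $J_0^{(R)}$ needed to anchor the competitor to $A$ is negligible against the blob $J_1^{(R)}$.
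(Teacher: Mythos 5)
Your proof is correct and follows the same strategy as the paper: invoke Lemma~\ref{lem:unbounded} to produce a horoball $V$ disjoint from $\PP$, then exhibit a sequence of competitors $J \in \mathcal{L}_A$ pushed into $V$ along which the ratio in \eqref{eq:bmc} tends to $0$ for every $M$. The only real difference is the shape of the witness sets: the paper takes $J_k$ to be a chain of $k$ cells along the geodesic toward the tangency point of $V$, so that the numerator stays bounded while $|J_k| = k$, whereas you take an exponentially large ball-shaped blob deep inside $V$ attached to $A$ by a linear tentacle; both work, and yours trades the paper's tubular-neighborhood observation for the exponential volume growth of hyperbolic balls.
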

\begin{proof}
  Suppose that $\PP$ is a discrete point set whose Voronoi complex has an unbounded cell with nucleus $p.$  Then there is a horoball $V$ tangent to the ideal boundary at some point $\omega.$  Let $\gamma$ be a geodesic from $p$ to $\omega.$  Then any fixed size tubular neighborhood $N$ of $\gamma$ is eventually contained in $V$ in that $V^c \cap N$ is compact.  

  We may take $A \in \Lattice$ in the definition of anchored bounded vacancy to be the cell containing $p.$   Let $J_k$ be an increasing sequence in $\mathcal{L}_A$ with $|J_k| = k$ and every cell of $J_k$ intersecting $\gamma.$  Then for any $M > 0,$ the collection of $\Lattice$ cells near to $J_k$ by $\dA$ distance $M$ is contained in some tubular neighborhood $N$ of $\gamma.$  Hence all but finitely many cells are contained in $V,$ and consequently
  \[
    \lim_{k \to \infty} 
    \frac{|\{B \in \Lattice: \dH(B,J_k) \leq M, |B \cap \PP| > 0\}|}{|J_k|} = 0.
  \]
\end{proof}

We can also use Lemma \ref{lem:unbounded} to give two alternative representations of a convex hull of a set of points in hyperbolic space.
We define the \emph{Voronoi boundary} of a discrete set of points $S$ to be those points in the Voronoi complex with nuclei $S$ whose cells are unbounded.
\begin{lemma}
	\label{lem:QST}
For each point \(w\) on the ideal boundary of $\Hd$, let \(H_w\) be the maximal horoball that passes through \(w\) that is disjoint from \(S\).
Let $Q$ be the compact set $\cap_w H_w^c,$ with the intersection over the entire boundary of $\Hd$.
Let $T\subset S$ be the Voronoi boundary of $S$. Then
\[
\convH(Q)=\convH(S)=\convH(T).
\]
\end{lemma}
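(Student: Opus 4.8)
The plan is to establish the two equalities $\convH(S)=\convH(T)$ and $\convH(S)=\convH(Q)$ separately, using Lemma \ref{lem:unbounded} as the main tool. For the first, I would argue that every point of $S$ lies in $\convH(T)$. Indeed, suppose $p \in S \setminus T$, so that the Voronoi cell of $p$ is bounded. By Lemma \ref{lem:unbounded}, there is no horoball through any boundary point with $p$ on its boundary sphere and disjoint from $S$; equivalently, $p$ is not an extreme point of $\convH(S)$ in the strong sense that it cannot be separated from $S \setminus \{p\}$ by a horoball. One then needs to upgrade this to: $p$ lies in the convex hull of the remaining points. Working in the Klein model, where hyperbolic convexity is Euclidean convexity, a point $p$ fails to be in $\convH(S\setminus\{p\})$ iff it is a Euclidean extreme point, iff there is a supporting hyperplane (hence a supporting half-space, hence — by taking a limit of large balls tangent from outside — a supporting horoball) witnessing it. Repeating the removal of non-extreme points (using discreteness and local finiteness to make the induction/limit argument rigorous) shows $\convH(S)=\convH(\text{extreme points})=\convH(T)$, where the last equality is exactly the statement, recalled in the paragraph before Lemma \ref{lem:unbounded}, that a Voronoi cell is unbounded iff its nucleus is an extreme point of $\convH(S)$, combined with Lemma \ref{lem:unbounded}.

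For the equality $\convH(S)=\convH(Q)$, I would show both inclusions. Since each maximal empty horoball $H_w$ is disjoint from $S$, its complement $H_w^c$ is a closed set (not convex, but) containing $S$; hence $Q=\cap_w H_w^c \supseteq S$, giving $\convH(Q)\supseteq\convH(S)$. For the reverse inclusion it suffices to show $Q \subseteq \convH(S)$, i.e.\ every point $z \notin \convH(S)$ lies in some open horoball $H_w$ disjoint from $S$. If $z\notin\convH(S)$, then in the Klein model there is a Euclidean (hence hyperbolic) half-space containing $S$ but not $z$; push its bounding hyperplane toward $z$ until it supports $\convH(S)$, then take the horoball tangent to the ideal boundary at the point $w$ on the geodesic ray from (a point of) $\convH(S)$ through $z$, with $z$ interior to it, obtained as an increasing limit of empty balls as in the proof of Lemma \ref{lem:unbounded}. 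Enlarging this horoball to the maximal empty one $H_w$ through the same boundary point keeps it disjoint from $S$ and still containing $z$, so $z\notin H_w^c$, hence $z\notin Q$. Thus $Q\subseteq\convH(S)$, and taking convex hulls (noting $\convH(S)$ is already convex) gives $\convH(Q)\subseteq\convH(S)$.

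The main obstacle I anticipate is the passage between the three equivalent "separation" notions — separation by a half-space (Euclidean/Klein convexity), by a ball disjoint from $S$, and by a horoball disjoint from $S$ — and in particular making sure the limiting horoball from Lemma \ref{lem:unbounded} really isolates the intended point $z$ or $p$ on the correct side while staying disjoint from $S$. The balls in Lemma \ref{lem:unbounded} are anchored to have a \emph{specific} nucleus $p$ on their boundary; here I instead need empty balls that \emph{contain} a prescribed point $z$ and grow to infinity in a controlled direction, so a small adaptation of that compactness argument (extracting a convergent subsequence of outward normals and taking the union of the balls) is required. Once that adaptation is in place, the rest is bookkeeping with convex hulls in the Klein model, where everything reduces to elementary Euclidean convex geometry.
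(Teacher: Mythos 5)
Your argument is correct in substance but routed differently from the paper's. The paper takes the two inclusions $\convH(Q)\supset\convH(S)\supset\convH(T)$ for free from $Q\supset S\supset T$ and closes the cycle with the single inclusion $\convH(Q)\subset\convH(T)$: it takes an extreme point $p$ of $Q$, uses a supporting hyperplane and a slightly enlarged tangent horoball to force $p\in S$ (otherwise $p$ would sit inside some maximal empty horoball $H_w$, hence outside $Q$), and then Lemma~\ref{lem:unbounded} places $p$ in $T$. You instead prove $\convH(S)=\convH(T)$ by showing that extreme points of $\convH(S)$ lie in $T$, and separately prove the pointwise inclusion $Q\subset\convH(S)$ by exhibiting, for each $z\notin\convH(S)$, an empty horoball containing $z$. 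This costs you two separation arguments where the paper needs one, but your inclusion $Q\subset\convH(S)$ is a more transparent geometric fact than $\convH(Q)\subset\convH(T)$, and the adaptation of the limiting-ball argument of Lemma~\ref{lem:unbounded} that you flag as the main obstacle is indeed exactly what is needed in both places (the cleanest choice in the second step: drop the perpendicular from $z$ to a \emph{strictly} separating hyperplane $L$ and take the horoball tangent to $L$ at the foot; do not push $L$ all the way to a supporting position, since $z$ could then land on $L$ and fail to be interior to the tangent horoball).

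Two cautions on the first half. The biconditional you quote --- a cell is unbounded iff its nucleus is an extreme point of $\convH(S)$ --- fails in the ``only if'' direction (three collinear points: the middle one has an unbounded slab-shaped cell and an empty horoball tangent to the geodesic, but is not extreme), and ``extreme iff exposed by a supporting hyperplane'' can fail for infinite sets. Your proof survives because you only need the true directions: an extreme point of $\convH(S)$ is a boundary point, hence admits \emph{some} supporting hyperplane $L$; the balls tangent to $L$ at $p$ inside the complementary open half-space are disjoint from $S$ and of unbounded radius, so $p\in T$ by Lemma~\ref{lem:unbounded}; combined with the trivial $\convH(T)\subset\convH(S)$ and $\convH(S)=\convH(\mathrm{ext}(\convH(S)))$ this yields equality, with no need for the iterated-removal argument. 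Both that last identity and the existence of maximal empty horoballs implicitly use that $S$ is finite (equivalently, that $Q$ is compact), which is also implicit in the paper's statement, so this is not a gap relative to the paper --- but it is worth making explicit.
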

\begin{proof}
The inclusions $\convH(Q)\supset\convH(S)\supset\convH(T)$ follow trivially from the inclusions $Q\supset S \supset T$. Hence to prove the lemma, it is enough to show that $\convH(Q)\subset\convH(T).$ 
Let $p$ be an extreme point of $Q,$ so that there exists a hyperplane $L$ such that $L\cap Q \supset\left\{p\right\}$ and so that one of the closed half--spaces bounded by $L$ contains $Q$. Suppose on way to a contradiction that $p\not\in S.$
Let $V$ be the horoball that is tangent to $L$ at $p$ and that is disjoint from $Q.$  As $p$ is separated from $S,$ it is possible to slightly enlarge $V$ to $V'$, another horoball tangent to the ideal boundary at the same point that still is disjoint from $S.$  As $V' \supset \overline V,$ we have that $p \in Q^c,$ a contradiction.  Hence $p \in S.$  By Lemma~\ref{lem:unbounded}, we have that $Q \cap S \subset T,$ so that we have shown that all the extreme points of $Q$ are in $T.$  Hence $\convH(Q) \subset \convH(T).$
\end{proof}

A major difference between Euclidean Voronoi complexes and hyperbolic Voronoi complexes is that in a hyperbolic Voronoi complex with nuclei $S,$ points that are close to the boundary of $\convH(S)$ must actually be close to the Voronoi boundary of $S.$

\begin{proposition}
	\label{prop:convexhull}
	For any $D> 0$ there is an $R>0$ so that 
	for all $S \subset \Hd$ finite,
	\[
		\bigcup_{p\in S} \HB\left(p,D\right)
		\subset \convH(T) \cup \bigcup_{p\in T} \HB\left(p,R\right),
	\]
	where $T \subset S$ is the Voronoi boundary of $S.$  
\end{proposition}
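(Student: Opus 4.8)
The plan is to argue by contradiction using a compactness argument, exploiting the fact that hyperbolic half-spaces are ``thin'' near the ideal boundary compared to horoballs. Suppose the statement fails: then there is a $D > 0$ and a sequence of finite sets $S_n \subset \Hd$ together with points $p_n \in S_n$ and $x_n$ with $\dH(x_n, p_n) \leq D$, such that $x_n \notin \convH(T_n)$ (where $T_n$ is the Voronoi boundary of $S_n$) and $\dH(x_n, q) > n$ for every $q \in T_n$. Applying an isometry we may assume each $x_n$ sits at a fixed basepoint $o$, so the $p_n$ all lie in the compact ball $\HB(o, D)$; passing to a subsequence, $p_n \to p_\infty$. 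Since $o = x_n \notin \convH(T_n)$, by Lemma \ref{lem:QST} there is for each $n$ a horoball $V_n$, tangent to the ideal boundary at some $\omega_n$, disjoint from $S_n$, whose boundary horosphere separates $o$ from $\convH(S_n)$; in particular $o$ lies on the far side of $\partial V_n$ from every point of $T_n$. Passing to a further subsequence, $\omega_n \to \omega_\infty$ on the ideal boundary and the separating horospheres converge (the horoball ``depths'' are bounded since $o$ stays at bounded distance from them: $o \notin V_n$ but $p_n \in$ a ball of radius $D$ meeting the picture).

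The key geometric input is the following contrast: a point $q$ with $\dH(q, o) \leq D$ that lies on the $\convH(S_n)$-side of the horosphere $\partial V_n$ must be within bounded distance of the ``base'' region of $V_n$ near $\omega_n$ — more precisely, the set of points at distance $\leq D$ from $o$ which are separated from $V_n$ by $\partial V_n$ is contained in a fixed-radius metric neighborhood of $\partial V_n$, hence (since the $x_n$-side geodesic toward $\convH(S_n)$ must exit $V_n$) the nearest Voronoi-boundary point along the way is at bounded distance. Concretely: take $q \in T_n$ realizing $\dH(o, \convH(T_n))$; the geodesic from $o$ to $q$ is a geodesic from a point outside $V_n$ into $\convH(S_n)$, and because $q \notin V_n$ while $o \notin V_n$ and the geodesic can dip into $V_n$ only a bounded amount (controlled by $D$ and the position of $p_n$), standard hyperbolic trigonometry in the upper half-space model — placing $\omega_n$ at $\infty$ so horoballs become horizontal half-spaces $\{ \text{height} \geq h \}$ and geodesics become vertical lines or Euclidean semicircles — gives $\dH(o, q) \leq R(D)$ for an explicit $R$ depending only on $D$ and $d$. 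This contradicts $\dH(x_n, q) > n$ for large $n$.

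To make the trigonometry clean I would pass to the upper half-space model $\{(y, t) : t > 0\}$ with $\omega_n = \infty$, so that $V_n = \{ t \geq h_n \}$ for some $h_n > 0$ and $\partial V_n = \{t = h_n\}$. Since $o = x_n \notin V_n$ and $\dH(p_n, o) \leq D$, the point $p_n$ has $t$-coordinate at most $e^D \cdot t(o)$, so after normalizing $t(o) = 1$ we get a uniform bound $h_n$ is at most something like $e^D$ — wait, actually we need $p_n \in \convH(S_n)$ only up to the horoball not containing it; the right bound is that $\convH(S_n)$ meets $\{t \leq h_n\}$ and $o \in \{t < h_n\}$, so the separating geodesic segment and any $q \in T_n$ on $\partial V_n$ lies at $t \leq h_n \leq e^{D}$ (after normalization, using that $V_n$ disjoint from $S_n \ni p_n$ forces $h_n \leq t(p_n) \leq e^D$). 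Then $o$ and $q$ both lie in the compact-modulo-horizontal-translation slab $\{ e^{-?} \leq t \leq e^D\}$, and $q$ lies on $\partial \convH(T_n) \subset \partial Q$, hence on the maximal horoball through some ideal point — but there the horosphere through $o$'s competing directions pins the horizontal displacement $|y(o) - y(q)|$ to be bounded too, since otherwise one could enlarge a horoball at a far-away ideal point to swallow $p_n$. I will extract the finitely many inequalities this gives and read off $R$; if $x \in \convH(T_n)$ already there is nothing to prove, which is why the $\convH(T)$ term appears in the statement.

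The main obstacle I anticipate is justifying the uniform bound on the ``horizontal'' displacement — i.e., ruling out that the nearest Voronoi-boundary point $q$, though at bounded height, drifts off toward a distant ideal point. This is exactly where the hyperbolic phenomenon (half-spaces are not horoballs, Lemma \ref{lem:unbounded} and Lemma \ref{lem:QST}) must be invoked decisively: any $p_n$ at bounded distance from $o$ that were genuinely far (in the horosphere-through-$o$ sense) from $\convH(T_n)$ could be enclosed in a horoball disjoint from $T_n$, contradicting $p_n \in \convH(S_n) = \convH(T_n)$ by Lemma \ref{lem:QST}, unless $p_n$ itself is on the Voronoi boundary — and in the latter case $q = p_n$ works with $R = D$. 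Handling the dichotomy ``$p_n \in T_n$'' versus ``$p_n$ deep inside $\convH(T_n)$'' cleanly, and getting the bound to be uniform over all finite $S_n$, is the delicate part; everything else is compactness plus two-dimensional hyperbolic trigonometry in the half-space model.
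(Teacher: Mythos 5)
There is a genuine gap. Your proposal sets up a contradiction/compactness scheme but never actually carries out the one step that matters: producing a point of $T$ within bounded distance of $x$. You defer this as ``the delicate part,'' and the mechanism you sketch for closing it does not work. First, you propose to contradict $p_n\in\convH(T_n)$ by enclosing $p_n$ in a horoball disjoint from $T_n$ (or from $S_n$): a horoball disjoint from $S_n$ can never contain $p_n\in S_n$, and a horoball disjoint merely from $T_n$ gives no contradiction, because horoballs are convex and $\convH(T_n)$ routinely dips inside an empty horoball (take two points of $T_n$ on its boundary horosphere; the geodesic between them enters the horoball). For the same reason, the boundary horosphere of $V_n$ does not ``separate $o$ from $\convH(S_n)$.'' Second, your assertion $o=x_n\notin V_n$ contradicts what Lemma~\ref{lem:QST} actually yields: $o\notin\convH(T_n)=\convH(Q_n)\supset Q_n$ forces $o\notin Q_n$, i.e.\ $o$ lies \emph{inside} some maximal empty horoball. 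Third, the point $q$ realizing $\dH(o,\convH(T_n))$ generally lies on a face of the hull, not in $T_n$. Finally, the compactness framing itself is problematic: $T_n$ is not stable under local limits of configurations (membership in the Voronoi boundary is a global condition, and points of $T_n$ can escape to infinity), so even if the limit existed you would still need a direct geometric argument in the limit configuration.

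The missing idea is a direct sweep of horoballs tangent at a \emph{common} ideal point. Given $x\in S\setminus T$ with $\HB(x,D)\not\subset\convH(T)$, take a supporting half--space $L\supset\convH(S)$ whose boundary hyperplane meets $\HB(x,D)$, let $w$ be the ideal endpoint of the outward normal ray from $x$ and $p$ its intersection with $\partial L$ (so $\dH(x,p)\le D$), and consider the horoballs $H_w(y)$ through $y\in[p,x]$, all tangent at $w$. The smallest, $H_w(p)$, lies in $L^c$ and so misses $S$; the largest, $H_w(x)$, must meet $S$ because $x\notin T$ (Lemma~\ref{lem:unbounded}). The first horoball in this family whose closure touches $S$ certifies a point $t\in T$, and $t$ is trapped in $H_w(x)\cap L$, whose diameter is bounded by a function of $D$ alone. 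This gives $R$ directly, with no contradiction argument and no limit extraction. Your intuition that ``half--spaces are thin near the ideal boundary compared to horoballs'' is pointing at the right phenomenon, but it has to be implemented through this one--parameter family and a first--contact argument, which is absent from your writeup.
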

\begin{proof}
  Let $x \in S$ be arbitrary.  We will take $R > D,$ and so if $x \in T$ there is nothing to show.  
  Likewise if $\HB(x,D) \subset \convH(T),$ there is nothing to show, and so there is a supporting hyperbolic half--space $L \supset \convH(S)$ not containing $\HB(x,D).$  Let $w$ be the point on the ideal boundary of $\Hd$ which is the endpoint of the geodesic ray from $x$ that is normal to $L,$ and let $p$ the point of intersection between this ray and $L.$  Let $H_w(y)$ be the open horoball defined by the ray from $y$ to $w.$   Note that $H_w(p) \subset L^c$ and therefore contains no points of $S.$  However, $H_w(x)$ is a horoball whose closure contains $x$ and by the assumption that $x\not\in T,$ it must be that $H_w(x)$ intersects $S.$  Hence, there is a point $y$ on the geodesic $[p,x]$ that is the closet point to $p$ at which the closure of $H_w(y)$ intersects $S.$  Let $t \in S$ be such a point, and note that $H_w(y)$ is an open horoball whose closure contains $t$ and is disjoint from $S$, and hence $t \in T.$  This implies that $T$ intersects $H_w(x) \cap L.$  The diameter of $H_w(x) \cap L$ can be controlled solely as a function of $D,$ which completes the proof.
\end{proof}

Finally, we show that to estimate the number of unbounded cells in a hyperbolic Voronoi complex, then it suffices to estimate, up to constants, the volume of a neighborhood of those points.

\begin{lm}
\label{lm:greatlemma}
There is a constant
For every $D > 0$ there is an $\beta$ so that for any finite set $S\subset\Htwo^d$ the Voronoi boundary $T$ of $S$ satisfies
\[
  {\VolH( \cup_{p \in S} \HB(p,D) )} \leq \beta |T|.
\]
\end{lm}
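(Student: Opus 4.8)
The plan is to bound the volume on the left by first splitting the union of balls $\HB(p,D)$ according to whether $p$ lies in the Voronoi boundary $T$ or not. The contribution of the points $p \in T$ is straightforward: each $\HB(p,D)$ has the same finite volume $\VolH(\HB(\cdot,D))$ by homogeneity of $\Hd$, so $\VolH(\cup_{p \in T}\HB(p,D)) \leq |T| \cdot \VolH(\HB(\cdot,D))$. The real work is to control $\VolH(\cup_{p \in S \setminus T}\HB(p,D))$, i.e.\ the balls around interior nuclei. Here I would invoke Proposition~\ref{prop:convexhull}: with $R = R(D)$ as produced there, every ball $\HB(p,D)$ is contained in $\convH(T) \cup \bigcup_{q \in T}\HB(q,R)$. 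The second piece again has volume at most $|T|\cdot \VolH(\HB(\cdot,R))$, so everything reduces to estimating $\VolH(\convH(T))$ — more precisely, $\VolH\big(\convH(T) \cap \bigcup_{p \in S}\HB(p,D)\big)$, since we only need the part of the convex hull that is actually within distance $D$ of some nucleus.

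The key geometric input is that in $\Hd$, a convex set has volume comparable to the volume of (a bounded neighborhood of) its boundary — hyperbolic convex bodies are "thin" relative to their boundary, unlike in Euclidean space. Concretely, I would argue that $\convH(T)$ is within bounded Hausdorff distance of its topological boundary $\partial\,\convH(T)$, or at least that the portion of $\convH(T)$ lying within distance $D$ of $S$ is within bounded distance of $T$ itself. The clean way to see this: the part of $\convH(T)$ far from $\partial\,\convH(T)$ contains a large ball, but a large ball in $\convH(S) = \convH(T)$ would force a Voronoi cell of a boundary point to be large, or more directly, any point of $\convH(T)$ within distance $D$ of a nucleus $p$ is, by a supporting-horoball argument in the spirit of Proposition~\ref{prop:convexhull} and Lemma~\ref{lem:QST}, within a $D$-dependent distance of some point of $T$. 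Then $\convH(T) \cap \bigcup_{p \in S}\HB(p,D) \subset \bigcup_{q \in T}\HB(q,R')$ for a suitable $R' = R'(D)$, giving volume at most $|T|\cdot \VolH(\HB(\cdot,R'))$. Collecting the three bounds and taking $\beta = \VolH(\HB(\cdot,D)) + \VolH(\HB(\cdot,R)) + \VolH(\HB(\cdot,R'))$ (all finite constants depending only on $D$ and $d$) finishes the proof.

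The main obstacle is making the statement "the relevant part of $\convH(T)$ is within bounded distance of $T$" precise and rigorous, uniformly over all finite $S$. Euclidean intuition fails here: a Euclidean convex hull can have huge interior volume compared to $|T|$, so the argument must genuinely use hyperbolicity (exponential volume growth, the discrepancy between horoballs and half-spaces). I expect the cleanest route is to reuse the supporting-horoball machinery already set up: if $x \in \convH(T)$ is within distance $D$ of some nucleus but far from $T$, run the argument of Proposition~\ref{prop:convexhull} to locate a nearby point of $T$, using that a point of $S$ near $x$ must have its defining horoball meeting $S$ close by. Care is needed because $x$ is in the hull rather than exterior to it, so one works with a supporting hyperplane of $\convH(T)$ through the nearest boundary point and transfers the estimate inward; the diameter bound for the horoball-hyperplane intersection is again a function of $D$ alone, which is exactly what Proposition~\ref{prop:convexhull} supplies.
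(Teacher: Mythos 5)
Your reduction via Proposition~\ref{prop:convexhull} to controlling the volume of $\convH(T)$ matches the paper's first step, but the way you propose to finish has a genuine gap. The claim you lean on --- that every point of $\convH(T)$ lying within distance $D$ of a nucleus is within a bounded, $D$-dependent distance of $T$, so that $\convH(T)\cap\bigcup_{p\in S}\HB(p,D)\subset\bigcup_{q\in T}\HB(q,R')$ --- is false. Take $S$ to be an $\epsilon$-net of a ball $\HB(o,r)$ with $\epsilon$ small and $r$ large. Any horoball through a nucleus lying deeper than $2\epsilon$ inside the ball contains a ball of radius $\epsilon$ inside $\HB(o,r)$ and hence meets $S$, so by Lemma~\ref{lem:unbounded} such a nucleus has a bounded cell; thus $T$ consists only of points within distance about $2\epsilon$ of the sphere $\partial\HB(o,r)$. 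Meanwhile $\convH(T)=\convH(S)$ essentially fills $\HB(o,r)$, and the center $o$ is within $\epsilon$ of a nucleus yet at distance roughly $r$ from $T$, with $r$ arbitrary. The same example defeats the stronger statement that a hyperbolic convex body lies within bounded Hausdorff distance of its topological boundary (a large ball is convex). The lemma survives in this example only because $|T|\asymp e^{(d-1)r}\epsilon^{1-d}$ is itself at least comparable to $\VolH(\HB(o,r))$: the hull's volume is controlled by the \emph{cardinality} of $T$, not by metric proximity to $T$.

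That cardinality bound is exactly the ingredient your proposal is missing. The paper invokes Proposition~\ref{prop:BE} (quoted from \cite{BE}): for any finite set, $\VolH(\convH(T))\le\alpha_d|T|$ with $\alpha_d$ depending only on $d$. This is the genuinely hyperbolic fact --- it fails badly in Euclidean space --- and it is a volume estimate rather than a containment estimate. With it the proof is two lines: by Proposition~\ref{prop:convexhull},
\[
\VolH\Bigl(\bigcup_{p\in S}\HB(p,D)\Bigr)\le\VolH(\convH(T))+|T|\,\VolH(\HB(o,R))\le\bigl(\alpha_d+\VolH(\HB(o,R))\bigr)|T|.
\]
Your intuition that hyperbolic convex bodies are ``thin relative to their boundary'' is pointing at the right phenomenon (a linear isoperimetric-type inequality for convex hulls), but the quantitative form needed is the linear-in-$|T|$ volume bound, and the bounded-neighborhood route you sketch cannot produce it.
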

This lemma is effectively an immediate corollary of Proposition~\ref{prop:convexhull} and the following proposition, proved in \cite{BE}:
\begin{prop}
  For any finite set $S \subset \Htwo^d,$
  \[
    \VolH(\convH(S)) \leq \alpha_d |S|,
  \]
  where $\convH(S)$ denotes the hyperbolic convex hull $S$, and $\alpha_d$ is a constant depending only on $d$.
  \label{prop:BE}
  \end{prop}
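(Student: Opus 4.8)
The plan is to establish the linear bound by a global curvature identity rather than by triangulating $\convH(S)$ and bounding each simplex. Although every hyperbolic $d$--simplex has volume at most a dimensional constant $V_d$, a triangulation of $\convH(S)$ on the vertex set $S$ can have a \emph{superlinear} number of top--dimensional simplices (as for cyclic--type configurations), so the naive estimate $\VolH(\convH(S)) \le V_d\cdot\#\{\text{simplices}\}$ is hopelessly weak. A second warning is that one cannot simply induct on dimension through the facets: the analogue in $\Htwo^{1}=\R$ is \emph{false} (an interval of length $L$ is the hull of $2$ points), so negative curvature must genuinely enter and the base case must be $d=2$. Since the hyperbolic volume of a convex hull depends continuously on its vertices and generic configurations are dense, a perturbation argument reduces the statement to $S$ in generic position, so that $K\Def\convH(S)$ is a simplicial polytope whose faces are totally geodesic.

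For even $d=2m$ the engine is the Gauss--Bonnet--Chern (Allendoerfer--Weil) formula for the polytope $K$. Because the faces of a hyperbolic polytope are totally geodesic, every extrinsic second--fundamental--form contribution to the boundary terms vanishes, and the only boundary contributions come from exterior angles at faces of even codimension; in constant curvature $-1$ the Pfaffian of the curvature form is a constant, so the interior integral is exactly a dimensional constant times $\VolH(K)$. As $K$ is a ball, $\chi(K)=1$, and solving for the volume yields a relation of the schematic form
\[
  \VolH(K)\;=\;\sum_{\substack{F \text{ face of } K\\ \operatorname{codim}F\ \text{even}}} a_{\operatorname{codim}F}\,\theta(F)\,\VolH(F)\;-\;b_d ,
\]
where $\theta(F)\ge 0$ is the (bounded) exterior angle at $F$, $\VolH(F)$ is its intrinsic volume with $\VolH(\text{vertex})\Def 1$, and $a_\bullet,b_d$ are dimensional constants. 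For $d=2$ this is precisely Gauss--Bonnet, $\operatorname{Area}(K)=\sum_i\epsilon_i-2\pi\le(k-2)\pi\le\pi|S|$, since each exterior angle $\epsilon_i<\pi$ and there are $k\le|S|$ vertices. In general the vertex terms (codimension $d$) are again $\le C|S|$ for free, and the whole game is to bound the contribution of the \emph{positive--dimensional} even--codimensional faces by $O(|S|)$.

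For odd $d$ I would reduce to the even case by coning up one dimension. Embed $\Htwo^{d}$ as a totally geodesic copy in $\Htwo^{d+1}$ and fix an apex $q$ at bounded distance $h$ from it; then $\convH(S\cup\{q\})$ is exactly the hyperbolic cone (join) over $K=\convH(S)$, and the already--established even--dimensional bound in dimension $d+1$ gives
\[
  \VolH\bigl(\convH(S\cup\{q\})\bigr)\;\le\;\alpha_{d+1}\,(|S|+1).
\]
Writing the $(d+1)$--volume of the cone as a geodesic integral over $K$, the transverse Jacobian $J(x)$ along the geodesic from $q$ to a base point $x\in K$ satisfies $J(x)\ge\int_0^{h}(\text{transverse }\sinh\text{ factors})\,dt\ge c(h)>0$ uniformly, because every such geodesic has length at least $h$. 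Hence $\VolH(K)\le c(h)^{-1}\VolH(\convH(S\cup\{q\}))\le\alpha_d|S|$, and this step is completely routine.

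The main obstacle is therefore the bookkeeping in the even--dimensional formula: controlling $\sum_{F}a_{\operatorname{codim}F}\theta(F)\VolH(F)$ over even--codimensional faces by $O(|S|)$, when the number of such faces is itself superlinear in $|S|$ (Upper Bound Theorem). Bounding term by term fails, so one must exploit cancellation, inducting on dimension through the even--codimensional faces---each of which is again an even--dimensional hyperbolic polytope obeying the same identity---and using the Gram--Euler angle relations for $K$ to convert the superlinear face count into a bounded total of exterior--angle weights. This is exactly where the negative curvature defeats the combinatorial complexity, and it is the content I expect to be hardest to make rigorous.
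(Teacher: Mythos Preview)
The paper does not prove this proposition; it is quoted from the reference \cite{BE} and used as a black box, so there is no argument in the paper to compare against.

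Your even--dimensional scheme via Gauss--Bonnet--Chern is a reasonable outline, and you are honest that the decisive step---controlling the positive--dimensional face contributions by $O(|S|)$ despite the superlinear face count---is left undone. So the even case remains a proposal, not a proof.

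The odd--to--even reduction, however, which you call ``completely routine'', is \emph{incorrect}. You assert that the transverse Jacobian $J(x)$ of the cone parameterization is bounded below by some $c(h)>0$ because every geodesic from the apex $q$ to a base point $x\in K$ has length at least $h$. But the Jacobian of the map $K\times(0,1]\to\convH(K\cup\{q\})$ along that geodesic is governed by Jacobi fields vanishing at $q$ and equal to tangent vectors of $K$ at $x$; the transverse component therefore carries a factor $\sin\beta(x)$, where $\beta(x)$ is the angle at which the geodesic from $q$ meets the totally geodesic copy of $\Htwo^d$ at $x$. In the right triangle with legs $h$ and $r=\dH(p,x)$ (right angle at the foot $p$ of $q$), hyperbolic trigonometry gives $\sin\beta(x)=\sinh h/\sinh(\dH(q,x))$, which decays like $e^{-r}$ as $r\to\infty$. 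Hence $J(x)\to 0$, and the inequality you need between the $(d{+}1)$--volume of the cone and the $d$--volume of $K$ fails in general. The phenomenon is already visible in the two--dimensional computation you invoke: a hyperbolic triangle over a base segment of length $2L$ has area less than $\pi$ for every $L$, so the ratio of cone volume to base length tends to $0$; the same exponential thinning of the cone away from the foot of $q$ occurs in every dimension. The length of the geodesic from $q$ to $x$ being at least $h$ controls the radial factor but not this angular degeneration, so the step cannot be repaired by adjusting $h$.
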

\begin{proof}[Proof of Lemma~\ref{lm:greatlemma}]
  Using Proposition~\ref{prop:convexhull} and Proposition~\ref{prop:BE}, letting $T$ be the Voronoi boundary of $S$ and letting $o \in \Hd$ be any point,
  \[
    \begin{aligned}
   \VolH( \cup_{p \in S} \HB(p,D) ) 
    & \leq \VolH( \convH(T)) + |T|\VolH(\HB(o,R)) \\
    &\leq (\alpha_d + \VolH(\HB(o,R)))|T|.
    \end{aligned}
  \]
  Thus the bound follows with  $\beta =(\alpha_d + \VolH(\HB(o,R))).$
\end{proof}

\section{Lattice discretizations of Voronoi complexes}
\label{sec:discretizations}

We suppose that $\PP$ is a countable subset of $\Hd$ so that the Voronoi complex on $\mathcal{S}$ has only finite cells.  We will now formulate conditions under which we can show the Delaunay graph of $\PP$ has anchored nonamenability.

We again let $\Lattice$ be any cocompact lattice, which we once more identify with the cells in a Voronoi tessellation with nuclei given by the orbit of $\Lattice$ of a point.  By transitivity, we have that the volumes of the cells are equal, and we let $V$ denote that volume.  We also let $D$ be the diameter of one (hence all) of these cells.

We would like to say that connected sets of points in $\PP$ resemble lattice animals in $\Lattice$ in some way.
We let $\mathcal{A}$ be the graph with vertex set $\Lattice$ and edges between any $A,B$ having $\dH(A,B) \leq 2D.$
For any set $S$ of points in $\Htwo^d$, let
\begin{align*}
\mathcal{I}(S)&:=
\left\{A \in \Lattice:\left|A \cap S\right|>0\right\}, \\
\mathcal{E}(S)&:=
\left\{A \in \Lattice:\left|A \cap S\right| = 0, \text{$A$ intersects an $\mathcal{S}$--Voronoi cell with nucleus in $S$}\right\}. 
\end{align*}

\begin{lemma}
\label{lem:vboundary}
  Suppose that $\PP$ is a discrete set of points in $\Htwo^d$ for which all Voronoi cells are bounded.
There is a $C>0$ so that
for any finite set $S\subset \PP$, we have
\[
|\del_{\text{out}} S|\geq C |\mathcal{I}(S)|
\]
\end{lemma}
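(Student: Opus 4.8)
The plan is to bound $|\partial_{\text{out}} S|$ from below by comparing it to the number of lattice cells $\mathcal{I}(S)$ that contain a point of $\PP$. The key observation is that a vertex $p \in S$ lies in $\partial_{\text{out}} S$ precisely when it has a Delaunay neighbor outside $S$, and Delaunay adjacency is governed by empty balls; so the strategy is to show that most cells in $\mathcal{I}(S)$ must ``see'' a point of $\PP\setminus S$ nearby, forcing a Delaunay edge from a point of $S$ to a point outside $S$. First I would set $R = R(S)$ to be the Voronoi cells of the $\PP$-complex whose nuclei lie in $S$, and recall from the hypothesis that all these cells are bounded — in fact, by the structure results of Section \ref{sec:hvstructure}, their union $U := \bigcup_{p\in S}(\text{Vor cell of }p)$ is a bounded region whose ``outer shell'' (cells adjacent to $U^c$) is comparable in volume, via Proposition \ref{prop:BE} and Lemma \ref{lm:greatlemma}, to the number of extreme-type points, i.e.\ to the size of the boundary.

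The main steps, in order: (1) For each cell $A \in \mathcal{I}(S)$, pick a witness point $p_A \in A \cap S$; since $\diamH(A) = D$, the point $p_A$ is within distance $D$ of all of $A$. (2) Consider the region $W := \bigcup_{A \in \mathcal{I}(S)} A$. Either a definite fraction of the cells of $W$ are ``interior'' (all neighboring $\Lattice$-cells also lie in $\mathcal{I}(S)$ or are covered by $\PP$-Voronoi cells with nuclei in $S$), or a definite fraction lie on the ``frontier'' of $W$. (3) In the frontier case, each such frontier cell $A$ abuts a cell $B$ with $\dH(A,B)\le 2D$ such that either $B\cap\PP=\emptyset$ but $B$ meets a $\PP$-Voronoi cell with nucleus $q\notin S$, or $B\cap\PP$ contains a point $q\notin S$; in the first subcase the nucleus $q$ is at bounded distance (controlled by $D$ and the cell geometry) from $p_A$, and one checks there is an empty ball of bounded radius through $p_A$ and $q$ — here one uses that $p_A$'s own Voronoi cell reaches into $B$ or near it — giving a Delaunay edge $p_A \leftrightarrow q$ with $q \notin S$, hence $p_A \in \partial_{\text{out}}S$; in the second subcase the same argument applies with $q \in B\cap\PP$ directly. (4) In the interior-dominated case, one instead uses Lemma \ref{lm:greatlemma} (applied to $S$) together with the fact that the interior cells are packed into $\convH(S)$-like region of volume $O(|T|)$ where $T$ is the Voronoi boundary of $S$; but $T \subset \partial_{\text{out}}S$ since the boundary points of $S$ each receive an edge to a point outside (indeed to an unbounded direction, which—because the full $\PP$-complex has only bounded cells—must be realized by an external $\PP$-point at bounded distance). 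Combining, $|\mathcal{I}(S)| \le \text{const}\cdot(\VolH(W)/V)$ and $\VolH(W) \le \VolH(\bigcup_{p\in S}\HB(p,D)) \le \beta|T| \le \beta|\partial_{\text{out}}S|$, so one can take $C = V/\beta$ up to the constants absorbed along the way.

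The main obstacle I anticipate is step (3)/(4): turning ``cell $A$ is near a $\PP$-Voronoi cell with nucleus $q \notin S$'' into an actual Delaunay edge from a point of $S$ to a point outside $S$, with the radius of the empty ball controlled uniformly in $S$. The subtlety is that the $\PP$-Voronoi cell of $p_A$ might be large (only finite, not uniformly bounded) even though the $\PP$-complex has all cells bounded; so I cannot directly say $p_A$'s cell reaches $B$. The fix is to reroute through Lemma \ref{lm:greatlemma} applied directly to the finite set $S$ (whose own Voronoi complex is a coarsening of the $\PP$-one) — this is why the lemma is stated for \emph{arbitrary} finite $S$ — and to observe that points of $\mathcal{I}(S)$ not near the Voronoi boundary $T$ of $S$ are trapped in $\convH(T)$ thanks to Proposition \ref{prop:convexhull}, whose volume is $O(|T|)$. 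Thus the volume bound does all the work and one never needs pointwise control of individual $\PP$-Voronoi cells, only the comparison $|\mathcal{I}(S)| \lesssim \VolH(\convH(T))/V + |T| \lesssim |T| \le |\partial_{\text{out}}S|$, where the last inequality holds because each point of $T$ has an unbounded $S$-Voronoi direction that, in the finer $\PP$-complex, forces an adjacent $\PP$-point outside $S$.
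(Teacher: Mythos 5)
Your final argument is correct and is essentially the paper's proof: bound $V|\mathcal{I}(S)|=\VolH(\cup_{A\in\mathcal{I}(S)}A)\le\VolH(\cup_{p\in S}\HB(p,D))\le\beta|T|$ via Lemma~\ref{lm:greatlemma}, then observe $T\subseteq\partial_{\text{out}}S$ because an unbounded $S$-Voronoi direction, in the $\PP$-complex whose cells are all bounded, forces a Delaunay edge to a point of $\PP\setminus S$. The frontier/interior case analysis in your steps (2)--(3) is an unnecessary detour, as you yourself note in the closing paragraph; the volume comparison alone does all the work.
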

\begin{proof}[Proof of Lemma~\ref{lem:vboundary}]
  By assumption, there are no unbounded cells in the Delaunay complex on $\PP.$
Therefore, any cell in the Voronoi boundary of $S$ is in $\del_{\text{out}} S$. 

Note that $\mathcal{I}(S)$ is contained in the $D$--neighborhood of $S,$ and $\VolH( \cup_{A \in \mathcal{I}(S)} A) = V|\mathcal{I}(S)|.$
Hence by Lemma~\ref{lm:greatlemma},
\[
  |\del_{\text{out}} S|\geq |\mathcal{I}(S)|V/R.
\]
\end{proof}


We also show that Voronoi complexes can be discretized to form connected subsets of $\mathcal{A}$ in the sense of the following lemma.

\begin{lemma}
\label{lem:connected}
For any $S \subset \PP$ that is connected in the Delaunay graph on $\PP$, 
then $\mathcal{E}(S) \cup \mathcal{I}(S)$
is connected in $\mathcal{A}.$
\end{lemma}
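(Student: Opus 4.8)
The plan is to show that $\mathcal{E}(S)\cup\mathcal{I}(S)$ is connected in $\mathcal{A}$ by exploiting the connectivity of $S$ in the Delaunay graph together with the fact (from the previous lemma's hypothesis) that all Voronoi cells are bounded. Since $S$ is Delaunay-connected, it suffices to prove the following two claims: (i) for any single point $p\in S$, the set of lattice cells $A\in\mathcal{E}(\{p\})\cup\mathcal{I}(\{p\})$ — that is, the cells meeting the Voronoi cell $W_p$ of $p$ — is connected in $\mathcal{A}$ and contains the cell of $\Lattice$ containing $p$; and (ii) for any Delaunay edge $\{p,q\}$ in $S$, the lattice cells meeting $W_p$ and those meeting $W_q$ together form a connected subset of $\mathcal{A}$, i.e.\ there is a lattice cell meeting both $W_p$ and $W_q$, or at least two lattice cells at $\dH$-distance $\le 2D$, one meeting each. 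Granting (i) and (ii), one walks along a Delaunay path in $S$ from any vertex to any other and concatenates the connected "clusters'' associated to each vertex and edge, obtaining connectivity of the union; and one checks that every cell of $\mathcal{E}(S)\cup\mathcal{I}(S)$ arises this way, since by definition every such cell either contains a point of $S$ (so lies in $\mathcal{I}(\{p\})$ for that $p\in S$) or meets some $W_p$ with $p\in S$ (so lies in $\mathcal{E}(\{p\})\cup\mathcal{I}(\{p\})$).

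For claim (i): the Voronoi cell $W_p$ is a bounded, convex, connected set containing $p$. The lattice cells meeting $W_p$ tile a neighborhood of $W_p$; since $W_p$ is connected and each lattice cell has diameter $D$, any two lattice cells meeting $W_p$ are joined by a chain of lattice cells all meeting $W_p$, with consecutive cells sharing a point of $W_p$ hence at $\dH$-distance $0\le 2D$ — this is the standard fact that the cells of a tessellation covering a connected set form a connected "patch.'' More carefully, one takes two points $x,x'\in W_p$, connects them by a geodesic segment inside $W_p$ (by convexity), and reads off the sequence of lattice cells the segment passes through; consecutive cells in this list intersect, so they are $\mathcal{A}$-adjacent, and all lie in $\mathcal{I}(\{p\})\cup\mathcal{E}(\{p\})$. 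The cell containing $p$ is among them since $p\in W_p$.

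For claim (ii): if $\{p,q\}$ is a Delaunay edge, there is an open ball $B$ disjoint from $\PP$ whose closure contains both $p$ and $q$. The center $c$ of $B$ is equidistant from $p$ and $q$ and no point of $\PP$ is strictly closer, so $c$ lies on the common Voronoi face $\overline{W_p}\cap\overline{W_q}$; in particular the segment $[p,c]$ lies in $\overline{W_p}$ and $[c,q]$ lies in $\overline{W_q}$. Applying the geodesic-crossing argument of (i) to $[p,c]$ and to $[c,q]$ produces chains of lattice cells in $\mathcal{E}(\{p\})\cup\mathcal{I}(\{p\})$ and in $\mathcal{E}(\{q\})\cup\mathcal{I}(\{q\})$ respectively, and the lattice cell containing $c$ appears in both chains (or, if $c$ lies on a lattice-cell boundary, one picks a cell containing $c$ and it meets both $\overline{W_p}$ and $\overline{W_q}$), giving the required bridge. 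Here the choice of edge threshold $2D$ in the definition of $\mathcal{A}$ gives comfortable room: even if the "meeting'' is only up to closures or through a single shared boundary point, the relevant cells are within $\dH$-distance $D<2D$.

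**The main obstacle** I anticipate is bookkeeping around boundary cases — points lying exactly on faces of the lattice tessellation or of the Voronoi complex, and making precise the claim that the lattice cells a geodesic segment passes through form an $\mathcal{A}$-connected sequence (one must handle a geodesic grazing an edge or vertex of the tessellation, where "consecutive'' cells might only share a lower-dimensional face or a single point, still within distance $2D$). None of this is deep, but it requires a clean formulation, e.g.\ fixing once and for all that $\mathcal{A}$-adjacency only needs $\dH(A,B)\le 2D$ and that any two lattice cells whose closures both meet a common bounded convex set of diameter at most $D$ are automatically within that distance, so that the entire argument reduces to covering connected sets by tessellation patches. The hyperbolic geometry plays essentially no role beyond the fact, already established, that the Voronoi cells are bounded and convex.
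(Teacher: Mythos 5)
Your proof is correct and follows essentially the same route as the paper's: cover each (convex) Voronoi cell $W_p$ by the chain of lattice cells met by geodesic segments, note these cells all lie in $\mathcal{E}(\{p\})\cup\mathcal{I}(\{p\})\subset\mathcal{E}(S)\cup\mathcal{I}(S)$, and glue the per-nucleus patches along the Delaunay structure of $S$. The only difference is that you make the gluing step explicit (via the center of the certifying ball lying on the common Voronoi face), whereas the paper leaves that reduction implicit and instead runs its geodesic from the nucleus $p$ itself, using the empty ball $\HB(h,r)$ to certify that the intermediate cells belong to $\mathcal{E}(S)$.
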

\begin{proof}[Proof of Lemma~\ref{lem:connected}]
  Let $p \in S$ be arbitrary, and let $X$ be the $\PP$--Voronoi cell $X$ with nucleus given by $p.$  
  Let $h$ be any point in $X,$ and let $L_h$ be any cell of $\Gamma$ in $X.$  It suffices to show that $L_h$ is connected to $L_p \in \mathcal{I}(\{p\})$ in $\mathcal{A}.$

  Let $\gamma$ be a geodesic connecting $p$ to $h.$  As $h$ is in $X$, the $\PP$--Voronoi cell with nucleus $p,$ the ball $\HB(h,r)$ with $r=\dH(h,p)$ is disjoint from $\PP.$  Hence any $A \in \Gamma$ intersecting $\HB(h,r-D)$ is empty.  If $r \leq D,$ then $\dH(L_h,L_p) \leq D$ and so $L_h$ and $L_p$ are connected in $\mathcal{A}$.  Hence, the only portion of $\gamma$ not necessarily covered by cells from $\mathcal{E}(S)$ is the initial segment of length $D.$  In particular, the distance from $L_p$ to $\HB(h,r-D)$ is at most $D,$ and hence $L_h$ and $L_p$ are connected in $\mathcal{A}$ through $\mathcal{E}(S).$
\end{proof}

Furthermore, if we enlarge these discretizations of some Voronoi cells, we do not cover many more cells of $\mathcal{I}(\PP).$  Let $\dA$ be the graph distance in $\mathcal{A}.$
\begin{lemma}
\label{lem:balls}
For any $M \in \N,$ there is a $C=C(M) < \infty$ so that for all $S \subset \PP$ the following holds.
Let $W = \left\{ A \in \Lattice : \dA(A, \mathcal{E}(S) \cup \mathcal{I}(S)) \leq M \right\}$
then
\[
  |\left\{ A \in W : |A \cap \PP| > 0 \right\}| \leq C | \mathcal{I}(S)|.
\]
\end{lemma}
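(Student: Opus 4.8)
The plan is to show that each cell $A$ counted on the left-hand side with $A\cap\PP\neq\emptyset$ lies within a hyperbolic distance depending only on $M$ and $D$ of the union of the cells of $\mathcal{I}(S)$, and then to finish with an elementary volume comparison. The first step is to pass from the combinatorial distance in $\mathcal{A}$ to genuine hyperbolic distance: if $\dA(A,B)\le M$, pick a path $A=A_0,A_1,\dots,A_k=B$ in $\mathcal{A}$ with $k\le M$ and $\dH(A_{i-1},A_i)\le 2D$, and note (by a straightforward induction using $\operatorname{diam}(A_i)=D$ and the triangle inequality) that $\dH(x,A_i)\le 3Di$ for every $x\in A$, so that $\dH(x,B)\le 3DM=:\rho_0$ for all $x\in A$. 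Now fix a cell $A\in W$ with $A\cap\PP\neq\emptyset$, choose $q\in A\cap\PP$, and choose $B\in\mathcal{E}(S)\cup\mathcal{I}(S)$ with $\dA(A,B)\le M$.

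The heart of the argument, which I expect to be the main point, is to locate $A$ near $S$ via the definitions of $\mathcal{E}(S)$ and $\mathcal{I}(S)$: in either case $B$ meets a $\PP$--Voronoi cell whose nucleus $p$ belongs to $S$ (when $B\in\mathcal{I}(S)$, take $p$ to be a point of $S$ contained in $B$, which lies in its own Voronoi cell). Choose $y\in B$ lying in the Voronoi cell of $p$, so that $\dH(y,p)=\dH(y,\PP)$ by the definition of the Voronoi cell. Since $q\in A$ with $\dH(q,B)\le\rho_0$ and $\operatorname{diam}(B)=D$, we have $\dH(q,y)\le\rho_0+D$, and because $q\in\PP$,
\[
  \dH(y,p)=\dH(y,\PP)\le\dH(y,q)\le\rho_0+D ,
\]
whence $\dH(q,p)\le 2(\rho_0+D)=:\rho_1$. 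As $p$ lies in some cell of $\mathcal{I}(S)$, the point $q$, and therefore all of $A$ (which has diameter $D$), lies within $\rho_1+D$ of $\bigcup_{C\in\mathcal{I}(S)}C$.

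To finish, let $N_r(\cdot)$ denote the closed hyperbolic $r$--neighbourhood. By the previous paragraph $\bigcup\{A\in W:A\cap\PP\neq\emptyset\}$ is contained in $N_{\rho_1+D}\!\big(\bigcup_{C\in\mathcal{I}(S)}C\big)=\bigcup_{C\in\mathcal{I}(S)}N_{\rho_1+D}(C)$, whose volume is at most $\sum_{C\in\mathcal{I}(S)}\VolH(N_{\rho_1+D}(C))$; since the cells of $\Lattice$ are pairwise isometric, each summand equals a single finite number $V_M$ that depends only on $M$ and $D$. On the other hand the cells of $\Lattice$ overlap only on null sets and each has volume $V$, so that union has volume exactly $V\cdot|\{A\in W:A\cap\PP\neq\emptyset\}|$. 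Combining the two bounds gives $|\{A\in W:|A\cap\PP|>0\}|\le (V_M/V)\,|\mathcal{I}(S)|$, which is the claim with $C(M)=V_M/V$. The only routine points to check are the numerical constant in the induction of the first step and the verification that $B$ genuinely meets a Voronoi cell with nucleus in $S$ in both the $\mathcal{E}(S)$ and the $\mathcal{I}(S)$ cases; neither causes difficulty, and we note in passing that this argument does not actually use the boundedness of the $\PP$--Voronoi cells.
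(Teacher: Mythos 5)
Your proof is correct and rests on the same geometric fact as the paper's: a cell of $W$ containing a point $q$ of $\PP$ forces the nucleus $p\in S$ of the Voronoi cell met by the nearby $B\in\mathcal{E}(S)\cup\mathcal{I}(S)$ to satisfy $\dH(q,p)\le\dH(q,y)+\dH(y,\PP)\le 2\dH(q,y)$, pinning such cells within a bounded distance of $\mathcal{I}(S)$. The only difference is cosmetic: the paper runs the argument in the contrapositive (far cells of $\mathcal{E}(S)$ are surrounded by large empty balls) and counts combinatorially per nucleus using the bounded degree of $\mathcal{A}$, whereas you conclude with a volume comparison; both are valid.
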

\begin{proof}
  Let $p \in S$ be arbitrary, 
  and let $X$ be the $\mathcal{S}$--Voronoi cell with nucleus $p.$
  Suppose $A \in \mathcal{E}(S)$ is an empty cell intersecting $X$ at a point $q.$
  As $q$ is contained in the $\mathcal{S}$--Voronoi cell with nucleus $p,$ 
  with $r = \dH(q,\mathcal{I}(p))$ we have that $\HB(q,r) \cap \mathcal{S} = \emptyset.$ 
  Furthermore, 
  \[
    \HB(q,r) \supset \bigl( \cup \left\{ B \in \Lattice : \dA(B,A) < r/(4D) \right\} \bigr).
  \]
  Hence, any $A \in \mathcal{E}(S)$ intersecting $X$ for which $\dH(A,\mathcal{I}(p)) > 4DM$ has the property that every $A \in \Lattice$ within $\dA$--distance $M$ contains no points of $\PP.$ Thus, there is a constant $C=C(M)$ so that
  \[
    |\left\{ A \in W : A \cap X \neq \emptyset, |A \cap \PP| > 0 \right\}| \leq C,
  \] 
  and summing over all $\mathcal{I}(S)$ proves the Lemma.
\end{proof}


These tools combine to give a proof Theorem \ref{thm:bmo_lattice} for showing when the Delaunay graph of $\PP$ has anchored nonamenability.  Before launching into the proof, we remark that \eqref{eq:bmd} or \eqref{eq:bmc}, we may replace $\mathcal{L}_A$ by the connected subsets of $\mathcal{A}$ which contain $A,$ as for any connected set $J$ in $\mathcal{A},$ we may find a connected set $J' \supset J$ in $\mathcal{L}_A$ for which $|J'| < C J$ for some constant $C$ depending only on $\Lattice.$ 

\begin{proof}[Proof of Theorem \ref{thm:bmo_lattice}]
  We wish to show there is a $\delta > 0$ so that if $S \ni x$ is any finite set in $\PP$ which is connected in the Delaunay graph on $\PP$ to an anchor point $\rho \in \PP$ then 
  \[
    |\del_{\text{out}} S| > \delta |S|.
  \]
  Let $A$ be an anchor cell of $\Gamma$ that contains $\rho.$ Let $J$ be $\mathcal{E}(S) \cup \mathcal{I}(S),$ which contains $A$.  By Lemma~\ref{lem:connected}, $J$ is connected in $\mathcal{A}$.  By the hypothesis that $\PP$ has anchored bounded density (recall \eqref{eq:bmd}), we have that there is a $\delta_0 > 0$ so that
  \begin{equation}\label{eq:thm1}
    |J| > \delta_0 |S|. 
  \end{equation}
  By the hypothesis that $\PP$ has anchored bounded vacancy, we have there is an $M$ and a $\delta_1 > 0$ so that
  \begin{equation}\label{eq:thm2}
    |\{B \in \Lattice : \dA(B,J) \leq M, |B \cap \PP| > 0\}|
    > \delta_1 |J|.
  \end{equation}
  By Lemma~\ref{lem:balls}, there is a $\delta_2 >0$ so that
  \begin{equation}\label{eq:thm3}
    |\mathcal{I}(S)|
    >
    \delta_2
    |\{B \in \Lattice : \dA(B,J) \leq M, |B \cap \PP| > 0\}|.
  \end{equation}
  By Lemma~\ref{lem:vboundary}, there is a $\delta_3 >0$ so that 
  \[
    |\del_{\text{out}} S| > \delta_3 |\mathcal{I}(S)|.
  \]
  Hence combining this with \eqref{eq:thm3}, \eqref{eq:thm2} and \eqref{eq:thm1}, we have the desired conclusion.
\end{proof}

We also show that under the additional control of Theorem \ref{thm:bmo_lattice2}, the degrees are under control.
\begin{proposition}\label{prop:degrees}
  Suppose that $\PP \subset \mathbb{H}^d$ has both anchored $p$-exponential density and anchored $q$-exponential vacancy for $\tfrac{1}{p} + \tfrac{1}{q} < \tfrac{1}{2}$, then the Delaunay graph $G$ on $\PP$ has 
  \[
    \sup_{\substack{S \ni \rho \\ S \text{ connected }}} \frac{\Vol[S](S)}{|S|} < \infty,
  \]
  with $\Vol[S](S)$ twice the number of edges from $S$ to $S.$
\end{proposition}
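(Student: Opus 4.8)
The plan is to bound $\Vol[S](S)$, which is twice the number of Delaunay edges with both endpoints in a connected $S\ni\rho$, by $C|S|$, by controlling each vertex degree through the geometry of its Voronoi cell. Write $\Vol[S](S)=\sum_{x\in S}\deg[S](x)$, where $\deg[S](x)$ is the number of Delaunay neighbours of $x$ lying in $S$. For $x\in\PP$ let $R(x):=\max_{z\in V(x)}\dH(x,z)$ be the circumradius of its Voronoi cell, which is finite by anchored bounded vacancy and Proposition~\ref{prop:finitecells}. If $y$ is a Delaunay neighbour of $x$, then $V(x)$ and $V(y)$ share a vertex $z$ with $\dH(x,z)=\dH(y,z)\le R(x)$, so $\dH(x,y)\le 2R(x)$; hence $\deg[S](x)\le\deg[G](x)\le|\PP\cap\HB(x,2R(x))|$. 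Two a priori estimates feed into this. First, a hyperbolic ball of radius $r$ meets $\asymp e^{(d-1)r}$ cells of $\Lattice$, so anchored $p$-exponential density bounds $|\PP\cap\HB(x,2R(x))|$ by a constant multiple of $e^{\frac 2p(d-1)R(x)}$, up to a lower-order contribution coming from joining these cells to the anchor. Second, letting $z_x$ realize $R(x)$, the cell $B_x\ni z_x$ is empty once $R(x)>D$, meets $V(x)$, lies in $J:=\mathcal E(S)\cup\mathcal I(S)$, and satisfies $\dH(\PP,B_x)\ge R(x)-D$; thus $R(x)\le\dH(\PP,B_x)+D$.

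Next I would record that $|J|\lesssim|S|$: $J$ is connected in $\mathcal{A}$ and contains the anchor cell $A$ by Lemma~\ref{lem:connected}, so anchored bounded vacancy produces $\gtrsim|J|$ occupied cells in a bounded neighbourhood of $J$, while Lemma~\ref{lem:balls} caps the number of such occupied cells by a constant multiple of $|\mathcal I(S)|\le|S|$. With this in hand, $\Vol[S](S)\lesssim\sum_{x\in S}e^{\frac 2p(d-1)\dH(\PP,B_x)}$, and I would sum this by grouping the vertices of $S$ according to their cell $B_x\in J$. The number of $x\in S$ with a given $B_x$ is itself controlled, by the same density and volume-growth estimate, by a multiple of $e^{\frac 1p(d-1)\dH(\PP,B_x)}$; this is the point at which the threshold $\tfrac12$ enters, since an internal edge is pinned down only once a \emph{pair} of its nuclei is placed within the common empty circumball and the circumradius enters doubled, as $2R(x)$. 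One thus arrives at $\Vol[S](S)\lesssim\sum_{B\in J}e^{c(d-1)\dH(\PP,B)}$ for a fixed $c$ that is a small multiple of $\tfrac1p$, and anchored $q$-exponential vacancy together with the power-mean inequality gives
\[
\sum_{B\in J}e^{c(d-1)\dH(\PP,B)}\;\le\;|J|^{\,1-\frac cq}\Bigl(\sum_{B\in J}e^{q(d-1)\dH(\PP,B)}\Bigr)^{c/q}\;\lesssim\;|J|\;\lesssim\;|S|,
\]
which is legitimate precisely because $\tfrac1p+\tfrac1q<\tfrac12$ forces $c\le q$ with room to spare.

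The step I expect to be the main obstacle is making the lower-order terms close uniformly, so that the final bound is genuinely linear in $|S|$ and not a superlinear power of it. Applying the density estimate to $\HB(x,2R(x))$ requires joining its $\asymp e^{2(d-1)R(x)}$ lattice cells back to the anchor $A$, which introduces terms governed by $\dA(A,B_x)$ and hence by $|J|$; likewise, bounding the number of vertices of $S$ that share a controlling cell requires a local estimate on the number of Delaunay simplices near that cell, which must again be expressed through the density of a small ball. Keeping track of which of these error terms are absorbed by $|J|\lesssim|S|$ and which by the power-mean step, while keeping every constant independent of $S$, is the delicate book-keeping — and it is exactly why the hypothesis is stated as the strict inequality $\tfrac1p+\tfrac1q<\tfrac12$ rather than with $\le$.
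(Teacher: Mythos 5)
Your per-vertex strategy --- bounding $\deg(x)$ by $|\PP\cap\HB(x,2R(x))|$ with $R(x)$ the circumradius of the Voronoi cell of $x$ --- is a genuinely different route from the paper's, and its geometric ingredients are sound: $\dH(x,y)\le 2R(x)$ for Delaunay neighbours, $R(x)\le\dH(\PP,B_x)+D$, and $|J|\lesssim|S|$ all check out. But the obstacle you flag at the end is not bookkeeping; it breaks the argument as organized. The density hypothesis is \emph{anchored}: to bound $|\PP\cap\HB(x,2R(x))|$ you must embed the cells meeting that ball in some $J'\in\mathcal{L}_A$, and any such $J'$ has $|J'|\gtrsim\dA(A,B_x)$ (or $\gtrsim|J|$ if you attach the ball to $J$ itself), so each local application of the hypothesis carries an additive cost of order $\dA(A,B_x)^{1/p}$ on top of $e^{\frac2p(d-1)R(x)}$. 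Summing over $x\in S$ when $S$ is, say, a long Delaunay path gives $\sum_x\dA(A,x)^{1/p}\asymp|S|^{1+1/p}$, which is superlinear, and no choice of connector animal avoids this because the hypothesis is being invoked $|S|$ separate times. The same defect afflicts your bound on the number of $x$ sharing a given controlling cell $B_x$.

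The paper closes exactly this gap by never applying the hypotheses locally. It counts each internal edge through the lattice cell $B\in J$ containing the center of an empty ball certifying it; since such a ball has radius $r$ with $r-D<\dH(\PP,B)<r$, both endpoints lie in a shell $\Delta B$ of $\lesssim e^{(d-1)\dH(\PP,B)}$ cells, giving
\[
  \Vol[S](S)\;\le\;2\sum_{B\in J}\Bigl(\sum_{X\in\Delta B\cap J}|X\cap\PP|\Bigr)^2.
\]
A single H\"older step over $B\in J$ with exponents $p,q$ then pairs the $q$-exponential vacancy (controlling $\sum_B\lambda^{p\,\dH(\PP,B)}$) against one global application of the density hypothesis, with the volume-growth factor $|\Delta B|^{2q-1}$ absorbed by choosing $\lambda>e^{2(d-1)}$; this is where $\tfrac1p+\tfrac1q<\tfrac12$ actually enters. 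To salvage your approach you would need the analogous regrouping --- fix the cell first, sum the degree contributions of all vertices it controls, and only then invoke density and vacancy once each over $B\in J$ --- which is essentially the paper's argument in different clothing.
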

\begin{proof}

  Let $E$ be the empty cells intersecting $\mathcal{I}(S),$ and let $J$ be $E \cup \mathcal{I}(S).$ By Lemma~\ref{lem:connected}, $J$ is connected to $A$ in $\mathcal{A}.$  Using anchored bounded density and anchored bounded vacancy, as in the proof of Theorem \ref{thm:bmo_lattice}, we have $\delta |J| < |\mathcal{I}(S)|$ for some $\delta.$  

  Recall that for an edge to exist between $s,t \in \mathcal{S},$ there must be an open hyperbolic ball $U$ with $s,t \in \partial U$ with $U \cap \mathcal{S} = \emptyset,$ and we will say that $U$ \emph{certifies} the edge from $s$ to $t.$  Note the center of this ball is contained in a cell of $J,$ as it is on the boundary of a Voronoi cell with nucleus in $S.$
    Hence if we define
    \(
    h(x,J)
    =h(x,J,\mathcal{S})
    \)
    as the number of $\mathcal{S}$--Delaunay edges within $J,$ with one vertex $x,$ and with the additional property that the edge is certified by an open ball centered in a cell of $J$,
    then it suffices to show that
    \[
      \sup_{J \in \mathcal{L}_A} \frac{\sum_{B \in J} \sum_{x \in B}h(x,J)}{|J|} < \infty.
    \]

    For any cell $B \in J,$ suppose there is a $U$ certifying an edge of $\mathcal{S}$ and having a center in $B.$  Suppose the radius of this ball is $r.$  Then it follows 
    \[
      r - D < \dH(\mathcal{S},B) < r.
    \]
  Hence, if we define $\Delta B$ to be the collection of $\Lattice$ cells $X$ which have 
  \[
    \dH(X,B) - 2D < \dH(\mathcal{S},B) < \dH(X,B),
  \]
  then every edge certified by a ball with center in $B$ has both endpoints in a cell in $\Delta B.$  Hence we can estimate
  \begin{equation}\label{eq:b1}
    \sum_{B \in J} \sum_{x \in B}h(x,J)
    \leq 
    2
    \sum_{B \in J} \biggl( \sum_{X \in \Delta B \cap J} |X \cap \mathcal{S}| \biggr)^2.
  \end{equation}
  Let $\lambda > e^{2(d-1)}$ and apply H\"older to the previous bound, from which we get
  \[
    \biggl(\sum_{B \in J} \lambda^{p\dH(\mathcal{S},B)}\biggr)^{1/p} \biggl(\sum_{B \in J} \biggl(\lambda^{-\dH(\mathcal{S},B)/2} \sum_{X \in \Delta B \cap J} |X \cap \mathcal{S}| \biggr)^{2q}\biggr)^{1/q}.
  \]
  First term is controlled from exponential mean vacancy.  For the second, we apply convexity to get the bound  \[
    \sum_{B \in J}
    \biggl(\lambda^{-\dH(\mathcal{S},B)/2} \sum_{X \in \Delta B \cap J} |X \cap \mathcal{S}| \biggr)^{2q}
     \leq
     \sum_{B \in J}
     |\Delta B|^{2q-1} \lambda^{-q\dH(\mathcal{S},B)}
     \sum_{X \in \Delta B \cap J} |X \cap \mathcal{S}|^{2q}
  \]
  Rearranging the sum, we have
  \[
     \sum_{B \in J}
     |\Delta B|^{2q-1} \lambda^{-q\dH(\mathcal{S},B)}
     \sum_{X \in \Delta B \cap J} |X \cap \mathcal{S}|^{2q}
     \lesssim
     \sum_{X \in J}|X \cap \mathcal{S}|^{2q}
     \sum_{k=1}^\infty e^{2q(d-1)k}\lambda^{-qk},
  \]
  which is bounded by $C|J|$ by assumption.
\end{proof}

\section{Application to Poisson point processes}
\label{sec:ppp}

We make some simple observations which allow Theorem~\ref{thm:bmo_lattice} to be useful, for example when applied to Poisson points.  We begin by observing that on account of $\mathcal{A}$ having bounded degree (in fact being regular), there is a constant $\Delta$ so that
\begin{equation}
  \label{eq:LAgrowth}
  \left|
  J \in \mathcal{L}_A : |J| = n
  \right| \leq \Delta^{n-1}~\forall n \in \N.
\end{equation}
\noindent We also observe that enlarging a set $J \in \mathcal{L}_A$ enlarges it by a multiple that can be made arbitrarily large depending on $M.$
\begin{lemma}
  \label{lem:boundary}
  For any $R>0,$ there is an $M$ so that
  \[
    \inf_{ J \in \mathcal{L}_A} \frac{|\{B \in \Lattice : \dA(B,J) \leq M\}|}{|J|} > R,
  \]
\end{lemma}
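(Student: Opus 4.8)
The plan is to exploit the fact that $\mathcal{A}$ is a vertex‑transitive (indeed regular) graph of bounded degree, which lets us reduce the claim to a purely combinatorial isoperimetric statement about connected finite subsets of $\mathcal{A}$. First I would observe that since $\mathcal{A}$ is a connected, infinite, locally finite, vertex‑transitive graph, the number of vertices within $\mathcal{A}$‑distance $M$ of a single vertex, call it $N_M$, goes to infinity as $M\to\infty$; in fact $N_M$ is the same for every vertex by transitivity. The key elementary fact I want is that for a connected finite vertex set $J$ in a graph of maximum degree $\Delta$, the set $J^{(M)} := \{B : \dA(B,J)\le M\}$ satisfies $|J^{(M)}| \ge c_M |J|$ for a constant $c_M$ that tends to infinity with $M$. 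This is where the bounded‑degree hypothesis is essential, because it gives a lower bound on how ``spread out'' a connected set must be relative to its $M$‑neighborhood.

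The cleanest way to get the $|J^{(M)}|\ge c_M|J|$ bound is via a covering/packing argument. Fix $M$ and let $m = \lfloor M/3\rfloor$ (or any constant fraction of $M$, the precise choice is a routine detail). Since $J$ is connected, I can greedily choose a maximal $2m$‑separated subset $\{x_1,\dots,x_k\}\subset J$: the balls $B_\mathcal{A}(x_i,m)$ are pairwise disjoint, each is contained in $J^{(m)}\subset J^{(M)}$ when $x_i\in J$ (more carefully, in $J^{(M)}$), so $|J^{(M)}| \ge \sum_i |B_\mathcal{A}(x_i,m)| = k\, N_m$ using transitivity. On the other hand, by maximality every point of $J$ is within $\mathcal{A}$‑distance $2m$ of some $x_i$, and since $J$ is connected with max degree $\Delta$ one gets $|J| \le k\, \Delta^{2m}$ — or, more carefully, $|J| \le k \cdot |B_\mathcal{A}(o,2m)| = k\, N_{2m}$. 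Combining, $|J^{(M)}|/|J| \ge N_m / N_{2m} \cdot (N_m/N_m)$... the honest inequality is $|J^{(M)}|\ge k N_m$ and $|J|\le k N_{2m}$, giving the ratio $\ge N_m/N_{2m}$, which is not obviously large. So instead I would run the packing at scale $m$ but bound $|J|$ by the covering at the \emph{same} scale: every $x\in J$ lies in $B_\mathcal{A}(x_i,2m)$ for some $i$ is the wrong direction. The fix: choose the $x_i$ maximal $m$‑separated, so balls $B(x_i,m/2)$ are disjoint and sit inside $J^{(M)}$ (need $M\ge m/2$), giving $|J^{(M)}|\ge k\,N_{m/2}$; and maximality gives $J\subset \bigcup_i B(x_i,m)$, hence $|J|\le k\,N_m$. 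Then $|J^{(M)}|/|J|\ge N_{m/2}/N_m$, still a bounded ratio.

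The resolution — and the actual main obstacle — is that we cannot get arbitrarily large ratios from a single neighborhood unless we use that $J$ is \emph{connected} more forcefully: a connected set in a bounded‑degree graph with $|J|=n$ has diameter at least $(n-1)/\Delta$... no. The correct and standard argument: since $J$ is connected, it contains a spanning tree, and a tree on $n$ vertices with max degree $\Delta$ has at least $n/(2\Delta)$ leaves' complement... Rather, the clean route is: a connected graph on $n$ vertices has $n-1$ edges in a spanning tree, and by a simple argument (e.g.\ Proposition on lattice animals, or directly) the $M$‑neighborhood of a connected set $J$ contains at least $\min(N_M, |J|\cdot \varepsilon_M)$ vertices where $\varepsilon_M\to\infty$; concretely, one shows that if $|J^{(M)}| < R|J|$ then iterating — $J\subset J^{(M)}\subset J^{(2M)}\subset\cdots$ with each step multiplying by at most $R$ — forces $N_{kM}\le R^{k+1}$, contradicting exponential (or even superpolynomial) growth of $\mathcal{A}$. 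But $\mathcal{A}$, being a cocompact‑lattice graph in $\mathbb H^d$, has exponential growth, so $N_M$ grows like $e^{cM}$; thus choosing $M$ with $e^{cM/ (\text{diam bound})} \gg R$ does it. \textbf{The main obstacle is packaging this exponential‑growth fact cleanly}: the rigorous statement I would invoke is that $\mathcal{A}$ is quasi‑isometric to $\mathbb H^d$, hence has exponential volume growth, and that for an exponential‑growth transitive graph, $\inf_J |J^{(M)}|/|J| \to\infty$ as $M\to\infty$ — equivalently, $\mathcal{A}$ is non‑amenable, which is exactly \eqref{eq:delaunay_expansion} applied to the quasi‑lattice $\Lattice$ itself. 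Given non‑amenability of $\mathcal{A}$, i.e.\ $|\partial_{\mathcal{A}} J|\ge c|J|$ for all finite $J$, one gets $|J^{(1)}|\ge (1+c)|J|$, and iterating $M$ times yields $|J^{(M)}|\ge (1+c)^M|J|$, so choosing $M$ with $(1+c)^M > R$ finishes the proof. I would write the argument in this last form: cite non‑amenability of the cocompact‑lattice graph $\mathcal{A}$, then iterate the linear isoperimetric inequality $M$ times.
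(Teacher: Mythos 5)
Your final packaged argument --- cite nonamenability of the cocompact-lattice graph $\mathcal{A}$ (deduced from the isoperimetric inequality of $\mathbb{H}^d$) and iterate the resulting linear isoperimetric inequality $M$ times to get $|\{B : \dA(B,J)\leq M\}| \geq (1+c)^M|J|$, then pick $M$ with $(1+c)^M > R$ --- is exactly the paper's proof, which likewise reduces the lemma to the statement that this lattice graph is an expander. The packing/covering detours you correctly abandoned are not needed; the only small detail worth recording in a clean write-up is converting the edge-isoperimetric constant into an outer vertex-boundary bound using the bounded degree of $\mathcal{A}$ before iterating.
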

\begin{proof}
  This follows as $\mathcal{L}_A$ is an expander, i.e.\ 
  \[
    \inf_{\substack{V \subset \mathcal{L}_A \\ |V| <\infty}} \frac{|\partial V|}{|V|} > 0,
  \]
  which can be deduced from the isoperimetric inequality for $\Htwo^d.$ 
\end{proof}

We now check that Corollary \ref{cor:ppp_lattice} follows from Theorem~\ref{thm:bmo_lattice}, which is to say that $\PPP$ has anchored bounded density and anchored bounded vacancy. 

\begin{lemma}
  \label{lem:ppp}
  A stationary Poisson point process has anchored bounded density and anchored bounded vacancy.
\end{lemma}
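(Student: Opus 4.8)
The plan is to verify the two conditions \eqref{eq:bmd} and \eqref{eq:bmc} separately for a Poisson point process $\PPP$ with intensity $\lambda \cdot \VolH$, using the basic fact that $|\PPP \cap B|$ is Poisson with mean $\lambda \VolH(B)$, together with independence over disjoint regions and the growth bound \eqref{eq:LAgrowth}. Throughout I would use that for $J \in \mathcal{L}_A$ with $|J|=n$, the set $\cup_{B \in J} B$ has volume exactly $nV$ (all cells have common volume $V$), and that $\dH(\cup_{B\in J}B, y)$ is controlled by $\dA$-distance up to the factor $2D$. The key probabilistic tool is a union bound over the at most $\Delta^{n-1}$ connected animals of size $n$ (by \eqref{eq:LAgrowth}), combined with Poisson tail estimates; since $\Delta^{n-1}$ grows only exponentially in $n$, any superexponential decay in $n$ of the relevant bad event (per animal) will make the union bound summable, and Borel--Cantelli then gives the almost sure conclusion.

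For anchored bounded density \eqref{eq:bmd}: I would show that almost surely there is a (random) constant $K$ with $|\PPP \cap (\cup_{B\in J}B)| \le K|J|$ for all $J \in \mathcal{L}_A$. Fix a large constant $c$ (to be chosen bigger than, say, $2e\lambda V \log \Delta$ or similar). For a fixed animal $J$ of size $n$, the count $|\PPP \cap (\cup_{B\in J}B)|$ is $\Poisson(\lambda n V)$, so by the standard Poisson upper tail bound $\Pr(\Poisson(\mu) \ge t) \le e^{-\mu}(e\mu/t)^t$ we get $\Pr(|\PPP \cap (\cup_{B\in J}B)| \ge cn) \le e^{-\Omega(n \log c)}$ once $c$ is large relative to $\lambda V$. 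Multiplying by the $\Delta^{n-1}$ animals of size $n$ and summing over $n$ gives a finite sum, so almost surely only finitely many animals are "bad"; enlarging $K$ to absorb those finitely many exceptions (each contributing a finite count over a finite union of cells) yields the bound for all $J$ simultaneously, hence the supremum in \eqref{eq:bmd} is almost surely finite.

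For anchored bounded vacancy \eqref{eq:bmc}: here the content is a \emph{lower} bound on the number of nonempty cells near $J$. I would first invoke Lemma~\ref{lem:boundary} to pick $M$ large enough that $|\{B : \dA(B,J) \le M\}| \ge R|J|$ for every $J$, where $R$ is a constant I will fix later (say $R = 4$). Now among these $\ge R|J|$ cells, a cell $B$ fails to meet $\PPP$ with probability $e^{-\lambda V}=:q < 1$, independently across disjoint cells. The cells within $\dA$-distance $M$ of $J$ need not be disjoint as I range over $J$, but $\{B : \dA(B,J)\le M\}$ is itself a connected animal of size at most $R'|J|$ for a constant $R'$ depending on $M$, and I can extract from it a subcollection of pairwise-disjoint (in particular non-adjacent-enough) cells of size a fixed fraction of $|J|$; the events $\{|B \cap \PPP| = 0\}$ over this subcollection are genuinely independent $\Bernoulli(q)$. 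A Chernoff bound then shows that the number of \emph{nonempty} cells among this independent family is at least $(1-q)/2$ times its size except with probability $e^{-\Omega(|J|)}$, which after multiplying by $\Delta^{R'|J|-1}$ (the number of animals $\{B:\dA(B,J)\le M\}$ can equal — bounded by counting the underlying $J$) is still summable in $n=|J|$ provided we first took $R$, equivalently $M$, large enough that the exponential gain beats $\log \Delta$. Borel--Cantelli then gives a uniform lower bound over all $J$, which is \eqref{eq:bmc}.

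The main obstacle is the dependence structure in the vacancy bound: unlike the density bound, where a single Poisson count per animal suffices, here one needs independence of many cell-occupancy events and must handle the fact that the $M$-neighborhoods of different animals overlap heavily. The clean way around this is to (i) use Lemma~\ref{lem:boundary} to guarantee there are at least $R|J|$ candidate cells near $J$ with $R$ as large as we like, (ii) thin this candidate set to a positive fraction that is pairwise disjoint so the emptiness events are independent $\Bernoulli(e^{-\lambda V})$, and (iii) choose $R$ large enough at the outset that the Chernoff exponent $\Omega(R|J|)$ dominates the entropy $(\log \Delta)\cdot(\text{size of neighborhood})= O(R'|J|\log\Delta)$ in the union bound — here one must be a little careful that enlarging $M$ simultaneously enlarges both the useful quantity ($R$) and the entropy term ($R'$), but since by Lemma~\ref{lem:boundary} the ratio $R/R'$ of "useful cells" to "total neighborhood cells" is bounded below while $R \to \infty$ as $M \to \infty$... in fact it is cleanest to fix $M$ so that $R \ge R'$ is not automatic, so instead I would phrase the Chernoff bound directly on the at-most-$\Delta^{R'n-1}$ animals and observe that the deviation probability per animal, $e^{-c R n}$, can be made to beat $\Delta^{R'n}$ because $R$ grows with $M$ while $R' $ grows only polynomially faster (indeed $R' \le (\deg \mathcal{A})^M$ and $R$ can be taken comparably large). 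Modulo this bookkeeping, the argument is routine.
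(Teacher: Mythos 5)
Your treatment of anchored bounded density is exactly the paper's: the count over a size-$n$ animal is $\Poisson(\lambda V n)$, a Poisson upper-tail bound gives superexponential-in-$n$ decay for a large enough threshold constant, and a union bound against the $\Delta^{n-1}$ animals of \eqref{eq:LAgrowth} plus Borel--Cantelli finishes. No issues there.

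For anchored bounded vacancy your strategy is also the paper's (Lemma~\ref{lem:boundary} to make the $M$-neighborhood large, independence of cell occupancies, Chernoff, union bound), but two points need repair. First, the ``thin to a pairwise-disjoint subcollection'' step is a phantom obstacle: the cells of $\Lattice$ are the cells of a Voronoi tessellation, so they have pairwise disjoint interiors and the occupancy indicators $\one[|B\cap\PPP|>0]$ over \emph{all} cells of the neighborhood are already independent; the count of nonempty cells is exactly $\Binomial(X,1-e^{-\lambda V})$ with $X=|\{B:\dA(B,J)\le M\}|\ge R|J|$, no thinning and no loss of the factor $R$. Second, and more seriously, your entropy accounting is wrong as stated: the union bound runs over the animals $J$ of size $n$, of which there are at most $\Delta^{n-1}$ by \eqref{eq:LAgrowth} --- the neighborhood $\{B:\dA(B,J)\le M\}$ is a function of $J$ and contributes no additional entropy. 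Your proposed count of $\Delta^{R'n}$ neighborhoods, combined with your claim that the per-animal deviation probability $e^{-cRn}$ beats it, would require $cR>R'\log\Delta$; since the useful cells are a subset of the neighborhood you always have $R\le R'$, and $c$ is a fixed constant determined by $\lambda V$, so this inequality is false in general and that version of the argument does not close. The correct closing, which is the paper's, is: per animal the failure probability is at most $e^{-\delta(1-e^{-\lambda V})R n}$, and by Lemma~\ref{lem:boundary} one chooses $M$ so that $\delta(1-e^{-\lambda V})R\ge\log\Delta+1$, making the sum over the $\Delta^{n-1}$ animals of size $n$ at most $e^{-n}$ and hence summable.
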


\begin{proof}
  We check each criterion of Theorem \ref{thm:bmo_lattice} separately.  Let $\lambda\cdot dV$ be the intensity of $\PPP,$ with $dV$ the volume measure on $\Htwo^{d}.$  By the properties of the Poisson process, for $J \in \mathcal{L}_A$ of cardinality $n$,
  \[
    |\PPP \cap (\cup_{B \in J} B)| \lawequals \Poisson(\lambda\cdot V \cdot n).
  \]
  In particular, there is a constant $C > 0$ so that for all $t \geq 0,$ $n \in \N$
  \[
    \Pr\left[ 
    |\PPP \cap (\cup_{B \in J} B)| 
    \geq (1+t)\lambda V n
    \right] \leq e^{-Ct \lambda V n}.
  \]
  Therefore, picking $t$ sufficiently large,
  \[
    \sum_{n=1}^\infty
    \Pr\left[ \exists J \in \mathcal{L}_A : |J| = n,
    |\PPP \cap (\cup_{B \in J} B)| 
    \geq (1+t)\lambda V n
    \right]
    \leq 
    \sum_{n=1}^\infty
    \Delta^n e^{-Ct \lambda V n} < \infty.
  \]
  So, by Borel--Cantelli,
  \[
    \sup_{ J \in \mathcal{L}_A} \frac{|\PPP \cap (\cup_{B \in J} B)|}{|J|} < \infty,
  \]
  almost surely.

  Conversely, for $J \in \mathcal{L}_A$ of cardinality $n,$
  \[
    |\{B \in \Lattice : \dA(B,J) \leq M, |B \cap \PPP| > 0\}|
    \lawequals
    \Binomial(X,1-\exp(-\lambda V)),
  \]
  where $X = |\{B \in \Lattice : \dA(B,J) \leq M\}.$  Hence, there is a constant $\delta >0$ so that
  \[
    \begin{aligned}
    &\Pr
    \left[ 
    |\{B \in \Lattice : \dA(B,J) \leq M, |B \cap \PPP| > 0\}|
    \leq X(1-\exp(-\lambda V))/2
    \right]  \\
    &\leq e^{-\delta X(1-\exp(-\lambda V))}.
  \end{aligned}
  \]
  By Lemma~\ref{lem:boundary}, by choosing $M$ sufficiently large, we can guarantee that
  \[
    \delta X(1-\exp(-\lambda V)) \geq  n (\log \Delta + 1),
  \]
  so that once more we can sum over all $J \in \mathcal{L}_A$ of cardinality $n$ to conclude that
  \[
    \Pr\left[ \exists J \in \mathcal{L}_A : |J| = n,
       |\{B \in \Lattice : \dA(B,J) \leq M, |B \cap \PPP| > 0\}|
     \right]
     \leq e^{-n}.
  \]
  Once more applying Borel--Cantelli, the desired conclusion holds. 
\end{proof}
\section{Berezin process}
\label{sec:hbr}
In this section we will build and partially characterize a family of stationary determinantal processes on $\Htwo^d.$  We do this, as to our knowledge only one stationary determinantal point process, the hyperbolic GAF has been developed (ignoring thinnings thereof).  We shall construct a one-parameter family of these processes which can be indexed by intensity.  For background on determinantal processes, see \cite{ShiraiTakahashi,HKPV,Soshnikov,AGZ}.

We remark from the outset that the standard method of constructing determinantal point processes would be to choose a reproducing kernel for some Hilbert space of functions on $\mathbb{H}^d.$  For example, the classic Bergman kernel $\frac{1}{(1-w\bar{z})^2}$ which defines the point process of zeros of the hyperbolic GAF is the reproducing kernel for the holomorphic functions of the disk which are in $L^2(|dz|^2)$.  A natural candidate to generalize this to higher dimensional real hyperbolic space might be to choose the reproducing kernel for some space of hyperbolic harmonic functions.\footnote{Nota bene, in conformal models of hyperbolic space in dimension $2,$ Euclidean and hyperbolic harmonic functions agree. In higher dimensional real hyperbolic space, this no longer holds.} 

Moreover, it would suffice to have any Hilbert space $\mathcal{H}$ of functions on $\mathbb{H}^d$ with a stationary inner product, i.e.\ one for which $\langle f , g\rangle = \langle f \circ \psi, g \circ \psi\rangle$ for any isometry $\psi,$ on which the evaluation map $f \mapsto f(x)$ is a bounded operator for all $x \in \mathbb{H}^d.$  The natural candidate of stationary harmonic functions on $L^2(dV)$ with $dV$ given by Haar measure unfortunately turns out to be the space $\{ 0 \}$ \cite[Corollary 10.4.2]{Stoll}.  Further, it is an open question to find such a Hilbert space of harmonic functions.  See \cite[Exercise 10.8.13]{Stoll}.
%
%
So, we will look at a determinantal process which is \emph{not} given by a reproducing kernel, but which seems to have some nice structural properties, including some natural interactions with hyperbolic harmonic functions (see Section \ref{sec:berezinthings}). 

We will work in the Poincar\'e ball model $B \subset \R^d,$ which we will identify with $\mathbb{H}^d.$
Let $\phi_a$ be the M\"obius automorphism of the ball exchanging $a$ with $0$.\footnote{See \cite{Ahlfors} for a thorough exposition on computations of higher dimensional M\"obius transformations. This is $T_a$ in \cite{Ahlfors}.}
When $d=2,$ with complex notation, 
\(
  \phi_a(z) = \frac{a-z}{1-\overline{a} z}.
\)
In higher dimensions, this can be given as \cite[(2.1.6)]{Stoll}
\begin{equation}\label{eq:mobius}
  \phi_a(x) \coloneqq \frac{a|x-a|^2 + (1-|a|^2)(a-x)}{[x,a]^2},\quad [x,a]^2 = 1-2\langle x,a\rangle+|a|^2|x|^2. 
\end{equation}

The modulus of this M\"obius transformation can be deduced from the identity \cite[(2.1.7)]{Stoll}, which gives
\begin{equation}
  1-|\phi_a(x)|^2 
  =\frac{(1-|x|^2)(1-|a|^2)}{[x,a]^2}.
  \label{eq:mobiusmodulus}
\end{equation}
We will also use that the modulus $|\phi_a(x)|$ is invariant in that for any $b \in B$
\begin{equation}
  \label{eq:invariance}
  |\phi_a(x)|^2 
  =
  |\phi_{\phi_b(a)}(\phi_b(x))|^2
\end{equation}
see \cite[(38)]{Ahlfors}.  From this it also follows that $|\phi_a(x)| = |\phi_x(a)|.$
The hyperbolic distance $\dH(a,b)$ can be expressed as
\begin{equation}
  \label{eq:hyp}
  \dH(a,b) = \log \biggl( \frac{1+|\phi_a(b)|}{1-|\phi_a(b)|}\biggr).
\end{equation}
We also observe the identity $\cosh(\dH(x,y))^{-1} = 1- |\phi_x(y)|^2,$ where we use $\cosh, \sinh$ to denote the hyperbolic cosine and sine respectively.

We begin with an observation on positive definiteness.  To agree with convention, we will let $\sigma = \tfrac{d-1}{2},$ and we define the kernel
\begin{equation}\label{eq:kernels}
  \mathcal{K}_s(x,y) \coloneqq (1-|\phi_x(y)|^2)^{\sigma(1+s)}= \cosh(\dH(x,y))^{-\sigma(1+s)}.
\end{equation}
which is therefore invariant and symmetric.  Moreover:
\begin{lemma}\label{lem:pdefkernel}
  For any $s \geq 0$ the kernel $\mathcal{K}_s :\Htwo^d \times \Htwo^d \to \R_+$ is positive definite.
\end{lemma}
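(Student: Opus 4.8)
The plan is to peel off a manifestly positive-definite prefactor and then reduce to a known positivity property of ultraspherical polynomials. Write $\alpha := \sigma(1+s) = \tfrac{(d-1)(1+s)}{2}$, so the hypothesis $s \geq 0$ means $\alpha \geq \sigma = \tfrac{d-1}{2}$; the case $d = 1$ is trivial ($\sigma = 0$, $\mathcal{K}_s \equiv 1$), so take $d \geq 2$. Using \eqref{eq:mobiusmodulus}, I would first rewrite $\mathcal{K}_s(x,y) = (1-|x|^2)^\alpha (1-|y|^2)^\alpha \, [x,y]^{-2\alpha}$. Since $(x,y) \mapsto (1-|x|^2)^\alpha(1-|y|^2)^\alpha$ is of the form $f(x)f(y)$ it is positive definite, and the Schur (entrywise) product of positive-definite kernels is positive definite; so it suffices to show that $(x,y) \mapsto [x,y]^{-2\alpha}$ is positive definite on $B \times B$.

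Next I would pass to polar coordinates. Writing $x = \rho\xi$, $y = \tau\eta$ with $\rho,\tau \in [0,1)$ and $\xi,\eta \in \mathbb{S}^{d-1}$ (the case $x = 0$ or $y = 0$ is vacuous since then $[x,y]^2 = 1$), formula \eqref{eq:mobius} gives $[x,y]^2 = 1 - 2(\rho\tau)\langle\xi,\eta\rangle + (\rho\tau)^2$. Because $\rho\tau < 1$, the generating-function identity for the Gegenbauer (ultraspherical) polynomials $C_n^{(\alpha)}$ yields $[x,y]^{-2\alpha} = \sum_{n\geq 0}(\rho\tau)^n\, C_n^{(\alpha)}(\langle\xi,\eta\rangle)$, with uniform convergence on compact subsets of $B\times B$. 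Since positive definiteness is preserved under (pointwise) limits of finite sums, it is enough to check that each kernel $(x,y)\mapsto (\rho\tau)^n\, C_n^{(\alpha)}(\langle\xi,\eta\rangle)$ is positive definite on $B$.

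For this last point I would invoke two classical facts. First, the connection formula $C_n^{(\alpha)}(t) = \sum_{k=0}^{\lfloor n/2\rfloor} c_{n,k}\, C_{n-2k}^{(\lambda)}(t)$ with $\lambda = \tfrac{d-2}{2}$, whose coefficients $c_{n,k}$ are nonnegative precisely because $\alpha \geq \tfrac{d-1}{2} > \lambda \geq 0$. Second, the addition theorem for spherical harmonics on $\mathbb{S}^{d-1}$, $C_m^{(\lambda)}(\langle\xi,\eta\rangle) = \gamma_m \sum_j Y_{m,j}(\xi)\overline{Y_{m,j}(\eta)}$ with $\gamma_m > 0$. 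Combining these and using $(\rho\tau)^n = |x|^{2k}|y|^{2k}\cdot|x|^{n-2k}|y|^{n-2k}$, together with the fact that $|x|^{n-2k}Y_{n-2k,j}(x/|x|)$ is a homogeneous (solid-harmonic) polynomial in $x$, the term $(\rho\tau)^n C_n^{(\alpha)}(\langle\xi,\eta\rangle)$ becomes a nonnegative linear combination of rank-one kernels $\overline{F(x)}F(y)$ with $F$ a polynomial — hence positive definite on $B$. In dimension $d = 2$ the index $\lambda = 0$ degenerates; there I would instead use the elementary expansion $C_n^{(\alpha)}(\cos\theta) = \sum_{k=0}^n \tfrac{(\alpha)_k(\alpha)_{n-k}}{k!\,(n-k)!}\cos((n-2k)\theta)$ (manifestly nonnegative coefficients) with the positive-definiteness of $\cos(m(\theta_\xi - \theta_\eta))$ on $\mathbb{S}^1$ — or, identifying $B$ with the disc, simply factor $[x,y]^{-2\alpha} = (1-\overline{x}y)^{-\alpha}\,\overline{(1-\overline{x}y)^{-\alpha}}$, a Schur product of the positive-definite kernel $(1-\overline{x}y)^{-\alpha} = \sum_k \tfrac{(\alpha)_k}{k!}\overline{x}^k y^k$ with its conjugate.

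The one substantive input is the nonnegativity of the connection coefficients $c_{n,k}$ for $\alpha \geq \lambda$; this is the step where the bound $s \geq 0$ (equivalently $\alpha \geq \tfrac{d-1}{2}$) is genuinely used — for exponents below $\tfrac{d-2}{2}$ the kernel stops being positive definite when $d \geq 3$, as one already sees from the sign of the constant term of $C_2^{(\alpha)}$ in the basis $\{C_0^{(\lambda)},C_2^{(\lambda)}\}$. The required positivity is classical: it can be read off an explicit product formula for $c_{n,k}$ as a ratio of Pochhammer symbols of positive arguments (see the standard references on ultraspherical polynomials), so no real computation is needed. An alternative route I would mention but not pursue is harmonic-analytic: $\mathcal{K}_s$ is a radial function on $\mathbb{H}^d$ and its positive-definiteness for $\alpha$ in this range can be read off its spherical (Harish--Chandra) transform; the Gegenbauer argument above is more self-contained. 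Everything else — the generating-function identity, the addition theorem, and the interchange-of-summation/uniform-convergence bookkeeping — is routine.
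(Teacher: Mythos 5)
Your argument is correct for the stated range of $s$, but it follows a genuinely different route from the paper's. The paper's proof is a two-line application of Schoenberg's theorem: it cites Faraut--Harzallah for the fact that $(x,y)\mapsto\log\cosh(\dH(x,y))$ is a kernel of (conditionally) negative type, whence $e^{-t\log\cosh(\dH)}=\cosh(\dH)^{-t}$ is positive definite for every $t\ge 0$, and then identifies $\cosh(\dH(x,y))^{-1}=1-|\phi_x(y)|^2$ via \eqref{eq:mobiusmodulus}. That argument is shorter, needs no case split at $d=2$, and actually yields positive definiteness for every exponent $\sigma(1+s)\ge 0$, i.e.\ for all $s\ge -1$. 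Your Gegenbauer/zonal-harmonic route is more self-contained and more explicit: it essentially reproduces by hand the spherical-harmonic decomposition that the paper only introduces later (in \eqref{eq:Ph} and Lemma \ref{lem:Kexpansion}), and it exhibits $\mathcal{K}_s$ directly as a nonnegative combination of rank-one kernels $F(x)\overline{F(y)}$ with polynomial $F$. The price is the imported nonnegativity of the Gegenbauer connection coefficients and the separate treatment of $d=2$; the payoff is a proof valid for all $\alpha\ge\tfrac{d-2}{2}$, which comfortably covers $s\ge 0$.

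One aside in your write-up is false and should be deleted: the claim that for $d\ge 3$ the kernel ``stops being positive definite'' once the exponent drops below $\tfrac{d-2}{2}$. The negativity of the coefficient of $C_0^{(\lambda)}$ in the single term $C_2^{(\alpha)}$ only shows that that one summand fails to be positive definite on the sphere; it says nothing about the full sum $\sum_n(\rho\tau)^nC_n^{(\alpha)}(\langle\xi,\eta\rangle)$, in which the $n=0$ and other terms can compensate. Indeed, the Schoenberg argument above shows that $[x,y]^{-2\alpha}$ (equivalently $\mathcal{K}$ with exponent $\alpha$, after conjugating by the product factor $(1-|x|^2)^{\alpha}(1-|y|^2)^{\alpha}$) is positive definite for \emph{every} $\alpha\ge 0$, so there is no threshold at $\tfrac{d-2}{2}$.
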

\begin{proof}
  The mapping $(x,y) \mapsto \log(\cosh(\dH(x,y)))$ has \emph{negative type} (see \cite[Proposition 7.3]{FarautHarzallah} or \cite[Section 2.6,Theorem 2.11.3]{BHV}; for the definition, c.f.\ \cite[Definition C.2.1]{BHV} where it is called \emph{conditionally negative type}).  As a consequence of Schoenberg's Theorem \cite[Theorem C.3.2]{BHV}, for any $t \geq 0,$
  \[
    e^{-t\log(\cosh(\dH(x,y)))}
  \]
  is positive definite.
  There is only to observe that
  \[
    \cosh(\dH(x,y)) = 1 + \frac{|x-y|^2}{(1-|x|^2)(1-|y|^2)} = \frac{[x,y]^2}{(1-|x|^2)(1-|y|^2)}
    =(1-|\phi_x(y)|^2)^{-1},
  \]
  using \eqref{eq:mobiusmodulus}.
\end{proof}

The kernel $\mathcal{K}_s$ is locally trace class on $L^2(dV)$ with $dV$ the hyperbolic volume measure
\begin{equation}
  dV(x) = \frac{dx}{c_d(1-|x|^2)^{d}},
\end{equation}
with $c_d$ the Euclidean volume of $B.$  Indeed the diagonal of $\mathcal{K}_s$ is $1$.

To define a determinantal point process, we must additionally be able to control the norm of the operator $\mathcal{K}_s.$ 
For $s > 1,$ this is a triviality, as from \eqref{eq:kernels}, $\mathcal{K}_s(x,y) \leq 2^{\sigma(1+s)}e^{-\dH(x,y)\sigma(1+s)},$ which decays faster then the volume growth exponent.  Hence $\int_{\Htwo^d} \mathcal{K}_s(x,y) \,dV(y)$ is uniformly bounded in $x.$  This implies the operator norm bound by the Schur test.  We will set $\mathcal{N}_{s,d}$ to be $\| \mathcal{K}_s\|_{op}.$
\begin{lemma}
  $\mathcal{N}_{s,d} < \infty$ if and only if $s > 0.$  Moreover, in the extremes, we have
  \[
    s^{-1} \lesssim
    \mathcal{N}_{s,d} \lesssim s^{-2}
    \text{ as } s \to 0
    \quad
    \text{and}
    \quad
    \mathcal{N}_{s,d} \asymp
    s^{-d/2}
    \quad
    \text{ as } s\to \infty.
  \]
\end{lemma}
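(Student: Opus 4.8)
The plan is to exploit that $\mathcal{K}_s$ is an invariant convolution operator on $\Htwo^d$ — its kernel depends only on $\dH(x,y)$ — so that its norm on $L^2(dV)$ is governed by the ground-state spherical function. Fix a basepoint $o\in\Htwo^d$, and let $\varphi_0$ denote the radial spherical function of $\Htwo^d$ with spectral parameter $0$, normalised so that $\varphi_0(o)=1$; we will use only that $\varphi_0$ is positive and that $\varphi_0(t)\asymp(1+t)e^{-\sigma t}$ as $t\to\infty$ (for odd $d$ it is elementary, e.g.\ $\varphi_0(t)=t/\sinh t$ in $\Htwo^3$). I claim that
\[
  \mathcal{N}_{s,d}=I_\infty\coloneqq\omega_{d-1}\int_0^\infty\cosh(t)^{-\sigma(1+s)}\,\varphi_0(t)\,\sinh^{d-1}(t)\,dt,\qquad \omega_{d-1}=|\mathbb S^{d-1}|,
\]
with the convention that $\mathcal{N}_{s,d}=\infty$ when the integral diverges. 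Granting this, the lemma becomes a one-variable integral estimate.

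To prove $\mathcal{N}_{s,d}=I_\infty$, the key input is the mean-value identity for $\varphi_0$: since $x\mapsto\varphi_0(\dH(o,x))$ is a Laplace eigenfunction of eigenvalue $-\sigma^2$, its spherical average over $\del\HB(x,t)$ equals $\varphi_0(\dH(o,x))\varphi_0(t)$ (equivalently, the functional equation of spherical functions). Integrating this over $0<t<\rho$ against $\cosh(t)^{-\sigma(1+s)}$ gives, for all $x$ and all $\rho\in(0,\infty]$,
\[
  \int_{\HB(x,\rho)}\mathcal{K}_s(x,y)\,\varphi_0(\dH(o,y))\,dV(y)=\varphi_0(\dH(o,x))\cdot\omega_{d-1}\int_0^{\rho}\cosh(t)^{-\sigma(1+s)}\varphi_0(t)\sinh^{d-1}(t)\,dt .
\]
For $\rho=\infty$ (and $I_\infty<\infty$) this says $\mathcal{K}_s$ carries the positive function $x\mapsto\varphi_0(\dH(o,x))$ to $I_\infty$ times itself, so the Schur test with this weight yields $\|\mathcal{K}_s\|_{op}\le I_\infty$; this is the natural replacement, valid for all $s>0$, of the crude Schur test with weight $1$ used for $s>1$ (which fails for $s\le1$, as $\mathcal{K}_s(o,\cdot)\notin L^1$ there). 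For the matching lower bound, test against $f_R\coloneqq\varphi_0(\dH(o,\cdot))\,\mathbf 1_{\HB(o,R)}$; since the integrand of $\|f_R\|_2^2=\omega_{d-1}\int_0^R\varphi_0(t)^2\sinh^{d-1}(t)\,dt$ is asymptotic to a constant multiple of $t^2$, we get $\|f_{cR}\|_2^2/\|f_R\|_2^2\to c^3$. Fixing $\epsilon\in(0,1)$ and restricting the double integral for $\langle\mathcal{K}_s f_R,f_R\rangle$ to $x\in\HB(o,(1-\epsilon)R)$, so that $\HB(x,\epsilon R)\subset\HB(o,R)$, the truncated identity with $\rho=\epsilon R$ gives $\langle\mathcal{K}_s f_R,f_R\rangle\ge I_{\epsilon R}\|f_{(1-\epsilon)R}\|_2^2$ with $I_{\epsilon R}\uparrow I_\infty$. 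Dividing by $\|f_R\|_2^2$ and letting $R\to\infty$, then $\epsilon\to0$, gives $\|\mathcal{K}_s\|_{op}\ge I_\infty$; the same computation shows $\mathcal{K}_s$ is unbounded when $I_\infty=\infty$.

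Finally, estimate $I_\infty$. Its integrand is $\asymp t^{d-1}$ near $t=0$ and $\asymp(1+t)e^{-\sigma s t}$ near $t=\infty$ (using $\sinh^{d-1}t\asymp e^{(d-1)t}=e^{2\sigma t}$ and $\varphi_0(t)\asymp(1+t)e^{-\sigma t}$), so $I_\infty<\infty$ iff $s>0$. As $s\to0^+$, the contribution of $\{t\le1\}$ is $O(1)$ and the tail is $\asymp\int_1^\infty(1+t)e^{-\sigma st}\,dt$, which is $\lesssim s^{-2}$ and $\ge\int_1^\infty e^{-\sigma st}\,dt\asymp s^{-1}$, giving $s^{-1}\lesssim\mathcal{N}_{s,d}\lesssim s^{-2}$. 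As $s\to\infty$, the tail over $\{t\ge1\}$ is $O(e^{-cs})$ for some $c>0$ (bound $\cosh(t)^{-\sigma(1+s)}$ there by $(\cosh1)^{-\sigma(s-1)}\cosh(t)^{-2\sigma}$ and note the remaining integral converges), so $I_\infty\asymp\int_0^1\cosh(t)^{-\sigma(1+s)}t^{d-1}\,dt$; since $\log\cosh t\asymp t^2$ on $[0,1]$, the rescaling $v=t\sqrt{\sigma(1+s)}$ (Laplace's method) yields $I_\infty\asymp s^{-d/2}\int_0^\infty e^{-cv^2}v^{d-1}\,dv\asymp s^{-d/2}$.

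The step I expect to be the main obstacle is the identity $\mathcal{N}_{s,d}=I_\infty$: cleanly invoking the mean-value identity and the two-sided estimate $\varphi_0(t)\asymp(1+t)e^{-\sigma t}$, and making the test-function lower bound (the truncation bookkeeping and the $\|f_R\|_2^2$ asymptotics) rigorous. One may instead cite Herz's majorization principle for positive invariant kernels on rank-one symmetric spaces; the one-variable asymptotics are routine in either case.
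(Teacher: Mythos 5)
Your proof is correct, and it takes a genuinely different route from the paper's. The paper runs the Schur test with the power weights $p_\beta(y)=\cosh(\dH(0,y))^{-\beta}$ for $\beta\in(\sigma(1-s),\sigma)$, evaluates the resulting integrals in closed form via the hyperbolic law of cosines and beta integrals, and obtains the lower bound by testing against the same $p_\beta$ with $\beta>\sigma$ (now in $L^2$); optimizing over $\beta$ gives the stated bounds, leaving the gap $s^{-1}\lesssim\mathcal N_{s,d}\lesssim s^{-2}$ as $s\to0$ unresolved. You instead use the \emph{exact} Schur weight, the bottom spherical function $\varphi_0$, which by the mean-value/functional equation is a genuine generalized eigenfunction of $\mathcal K_s$; combined with the truncated test functions $\varphi_0\cdot\one[\,\cdot\in\HB(o,R)]$ this is precisely Herz's majorization principle for positive invariant kernels, and it yields the identity $\mathcal N_{s,d}=\int\mathcal K_s(o,y)\,\varphi_0(\dH(o,y))\,dV(y)$, i.e.\ the spherical transform of the kernel at the bottom of the $L^2$ spectrum. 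This buys three things: the whole lemma collapses to one-variable asymptotics of a single integral; you actually close the paper's gap at $s\to0$, obtaining the sharp rate $\mathcal N_{s,d}\asymp s^{-2}$; and your identity is exactly the rigorous form of the ``almost-eigenfunction at the bottom of the spectrum'' heuristic behind Conjecture \ref{conj:norm}, so evaluating your integral when $d=2$ (where the reciprocal eigenvalue $1/G_s(-1)$ is computed in Section \ref{sec:berezinthings}) would settle that conjecture. The price is importing standard but nontrivial spherical-function facts: the two-sided estimate $\varphi_0(t)\asymp(1+t)e^{-\sigma t}$ and the mean-value property for eigenfunctions; also note that your limit $\|f_{cR}\|_2^2/\|f_R\|_2^2\to c^3$ uses the genuine Harish-Chandra asymptotic $\varphi_0(t)\sim c_0\,t\,e^{-\sigma t}$ rather than merely $\asymp$ (with only $\asymp$ you would lose a harmless multiplicative constant in the lower bound, which still proves the lemma as stated but not the exact identity). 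The paper's argument is longer and cruder at $s\to 0$, but is self-contained and produces explicit gamma-function constants.
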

\begin{proof}
  We again apply the Schur test.  Let $\beta \in (\sigma(1-s),\sigma)$ be fixed and let $p(y) = \cosh( \dH(0,y))^{-\beta}$ for all $y \in \Htwo^d.$
We must show that
\[
  \sup_{x \in \Htwo^d} \frac{1}{p(x)}\int_{\Htwo^d} \mathcal{K}_s(x,y)p(y) \,dV(y) < \infty.
\]
From the hyperbolic law of cosines, if we set $\alpha$ as the angle made between $(0,x,y),$ then
\[
  \cosh( \dH(0,y) ) = \cosh( \dH(0,x))\cosh( \dH(x,y)) - \sinh( \dH(0,x))\sinh( \dH(x,y))\cos(\alpha).
\]
Then we can estimate
\[
  \cosh( \dH(0,y) ) \geq \cosh( \dH(0,x))\cosh( \dH(x,y))(1 - \cos(\alpha)_+)
\]
Hence if we integrate this in polar coordinates centered at $x,$
\begin{equation}\label{eq:norm1}
  \begin{aligned}
   &\int_{\Htwo^d} \mathcal{K}_s(x,y)p(y)\,dV(y) \\
   &\leq
   \cosh(\dH(x,0))^{-\beta}
   \int\limits_0^\infty
   \int\limits_{S^{d-1}}
   \cosh(r)^{-\sigma(1+s)-\beta}
   (1 - \cos(\alpha)_+)^{-\beta}
   f'(r)\,drd\vartheta.
 \end{aligned}
 \end{equation}
with $f'(r)=\sinh(r)^{d-1}.$
Under uniform measure on the sphere, the variable $\cos(\alpha)$ has the density
\[
  \frac{\Gamma(\sigma + \tfrac12)}{\sqrt{\pi}\Gamma(\sigma)} (1-x^2)^{{\sigma} - 1}\one[ |x| \leq 1] \,dx.
\]
Using the bound
\(
1-(x)_+ \geq \frac{1-x^2}{2},
\)
we conclude that for $\beta < \sigma.$
\[
  \begin{aligned}
   \int\limits_{S^{d-1}}
   (1 - \cos(\alpha)_+)^{-\beta}
   d\vartheta(\alpha)
   &\leq
   2^{\beta}
   \frac{\Gamma(\sigma + \tfrac12)}{\sqrt{\pi}\Gamma(\sigma)} 
   \int_{-1}^1
   (1-x^2)^{{\sigma} - 1 - \beta}\one[ |x| \leq 1] \,dx, \\
   &=2^{\beta}
   \frac{\Gamma(\sigma + \tfrac12)\Gamma(\sigma-\beta)}{\sqrt{\pi}\Gamma(\sigma)\Gamma(\sigma-\beta+\tfrac 12)}. 
 \end{aligned}
\]
We also have for $\sigma(1+s)+\beta > (d-1),$
\begin{equation}\label{eq:betaintegral}
  \int_0^\infty \sinh(r)^{d-1}\cosh(r)^{-\sigma(1+s)-\beta}\,dr
  =
  \frac{\Gamma(\sigma+\tfrac 12)\Gamma(\tfrac{\sigma(s-1)}{2}+\tfrac{\beta}{2})}{2\Gamma(\tfrac{\sigma(1+s)}{2}+\tfrac{\beta}{2}+\tfrac12)}.
\end{equation}
Applying these bounds to \eqref{eq:norm1}, we conclude that for any $\beta \in (\sigma(1-s),\sigma)$ 
\begin{equation}\label{eq:normupper}
  \int_{\Htwo^d} \mathcal{K}_s(x,y)\tfrac{p(y)}{p(x)}\,dV(y)
  \leq
  2^{\beta}
  \frac{\Gamma(\sigma + \tfrac12)\Gamma(\sigma-\beta)}{\sqrt{\pi}\Gamma(\sigma)\Gamma(\sigma-\beta+\tfrac 12)}
  \frac{\Gamma(\sigma+\tfrac 12)\Gamma(\tfrac{\sigma(s-1)}{2}+\tfrac{\beta}{2})}{2\Gamma(\tfrac{\sigma(1+s)}{2}+\tfrac{\beta}{2}+\tfrac12)}.
\end{equation}

We evaluate the asymptotics of this bound.
As $s\to 0,$ it is asymptotically optimal to pick $\beta = \sigma(1-\tfrac{s}{2}).$  For this choice we get that the bound shown asymptotically is
  \[
  2^{\beta}
  \frac{\Gamma(\sigma + \tfrac12)\Gamma(\sigma-\beta)}{\sqrt{\pi}\Gamma(\sigma)\Gamma(\sigma-\beta+\tfrac 12)}
  \frac{\Gamma(\sigma+\tfrac 12)\Gamma(\tfrac{\sigma(s-1)}{2}+\tfrac{\beta}{2})}{2\Gamma(\tfrac{\sigma(1+s)}{2}+\tfrac{\beta}{2}+\tfrac12)}
  \sim \frac{2^{\sigma}\Gamma(\sigma+\tfrac 12)}{\sqrt{\pi}\Gamma(\tfrac 12)\Gamma(\sigma)}\frac{4}{\sigma^2 s^2} \text{ as } s \to 0.
  \]
  Conversely, as $s \to \infty,$ we may take $\beta = 0,$ for which choice the upper bound becomes
  \[
    \mathcal{N}_{s,d}
    \leq
    \frac{\Gamma(\sigma+\tfrac 12)\Gamma(\tfrac{\sigma(s-1)}{2})}{2\sqrt{\pi}\Gamma(\tfrac{\sigma(1+s)}{2}+\tfrac12)}
    \sim \frac{\Gamma(\sigma+\tfrac 12)}{2\sqrt{\pi}} \biggl(\frac{\sigma s}{2}\biggr)^{-\sigma-\tfrac 12} \text{ as } s \to \infty.
  \]

We can as well consider the lower bound.  Here we just use $p(y)$ as a test function, for $\beta > \sigma.$  Then
\[
  \mathcal{N}_{s,d} \cdot \int p(x)^2\,dV(x) \geq \int \mathcal{K}_s(x,y) p(x)p(y)\,dV(x)\,dV(y).
\]
Using $\cosh(\dH(x,y)) \leq \cosh(\dH(0,x))\cosh(\dH(0,y))$ whenever $\cos(\alpha) > 0$ (which has $\vartheta$--probability $\tfrac 12$) and \eqref{eq:betaintegral}
\begin{equation} \label{eq:normlower}
  \mathcal{N}_{s,d}
  \geq 2^{-1}
  \biggl(\frac{\Gamma(\sigma+\tfrac 12)\Gamma(\tfrac{\sigma(s-1)}{2}+\tfrac{\beta}{2})}{2\Gamma(\tfrac{\sigma(1+s)}{2}+\tfrac{\beta}{2}+\tfrac12)}\biggr)^2
  \biggl(\frac{\Gamma(\sigma+\tfrac 12)\Gamma(-\sigma+\beta)}{2\Gamma({\beta}+\tfrac12)}\biggr)^{-1}.
\end{equation}
If $s < 0$ then taking $\beta=\sigma(-s+1)$, we have $\mathcal{N}_{s,d} = \infty.$  Likewise for $s=0,$ taking $\beta \downarrow \sigma$ causes the bound to diverge. 
For $s > 0,$ we take $\beta = \sigma(1+s),$ 
\[
  \mathcal{N}_{s,d}
  \geq 2^{-1}
  \frac{\Gamma(\sigma+\tfrac 12)\Gamma(\sigma s)}{2\Gamma({\sigma(1+s)}+\tfrac12)}.
\]
\end{proof}
\noindent For the specific choice of $s=\frac{1}{d-1}$, we will compute the norm in 
Proposition \ref{prop:norm}.
We also remark that for the Berezin kernels in the unit ball of $\C^d$, similar norms have been evaluated analytically.  See \cite{LiuZhou} and \cite{Dostanic}.  

\begin{remark}
  From the fact that $\mathcal{K}_{s+t} = \mathcal{K}_s\mathcal{K}_t \leq \mathcal{K}_s,$ for any $s,t >0,$ and the positivity of the kernel, it follows that $\mathcal{N}_s$ must in fact be decreasing in $s.$  
\end{remark}

Hence for $s > 1,$ the operator $\mathcal{K}_s/\mathcal{N}_{s,d}$ is a locally trace class, non--negative contraction on $L^2(dV).$  From the Machi--Soshnikov theorem, \cite{Soshnikov}, it therefore defines a determinantal point process $\Delta_{s,d}$ on $B.$  Moreover, the resulting point process is stationary on $\Htwo^d,$ as for any M\"obius transformation $\psi$ the law of $\psi^{-1}(\Delta_{s,d})$ is determinantal with kernel $\mathcal{K}_s(\psi(\cdot),\psi(\cdot))/\mathcal{N}_{s,d}$ acting on $L^2(V \circ \psi),$ and both the kernel and reference measure are invariant under $\psi.$ 

\begin{definition}
  \label{def:Ks}
  Define the \emph{Berezin} point process $\Delta_{s,d}$ as the determinantal point process on $\Htwo^d$ with kernel $\mathcal{K}_s/\mathcal{N}_{s,d}.$
\end{definition}
The name is given as the kernel $\mathcal{K}_s$ defines the \emph{Berezin} kernel.  In $2$--dimensions, the properties of this kernel and its resulting map are relatively well developed in \cite{Hedenmalm}.  In higher dimensions, there are some properties developed in \cite[Chapter 10]{Stoll}.  There is also a spectral expansion developed for Berezin kernels in hyperbolic spaces \cite{vanDijkHille, Neretin1, Neretin2}.  We highlight a few nice properties in Section \ref{sec:berezinthings}. 

\begin{figure}
  \begin{subfigure}[t]{0.45\textwidth}
    \includegraphics[width=\textwidth]{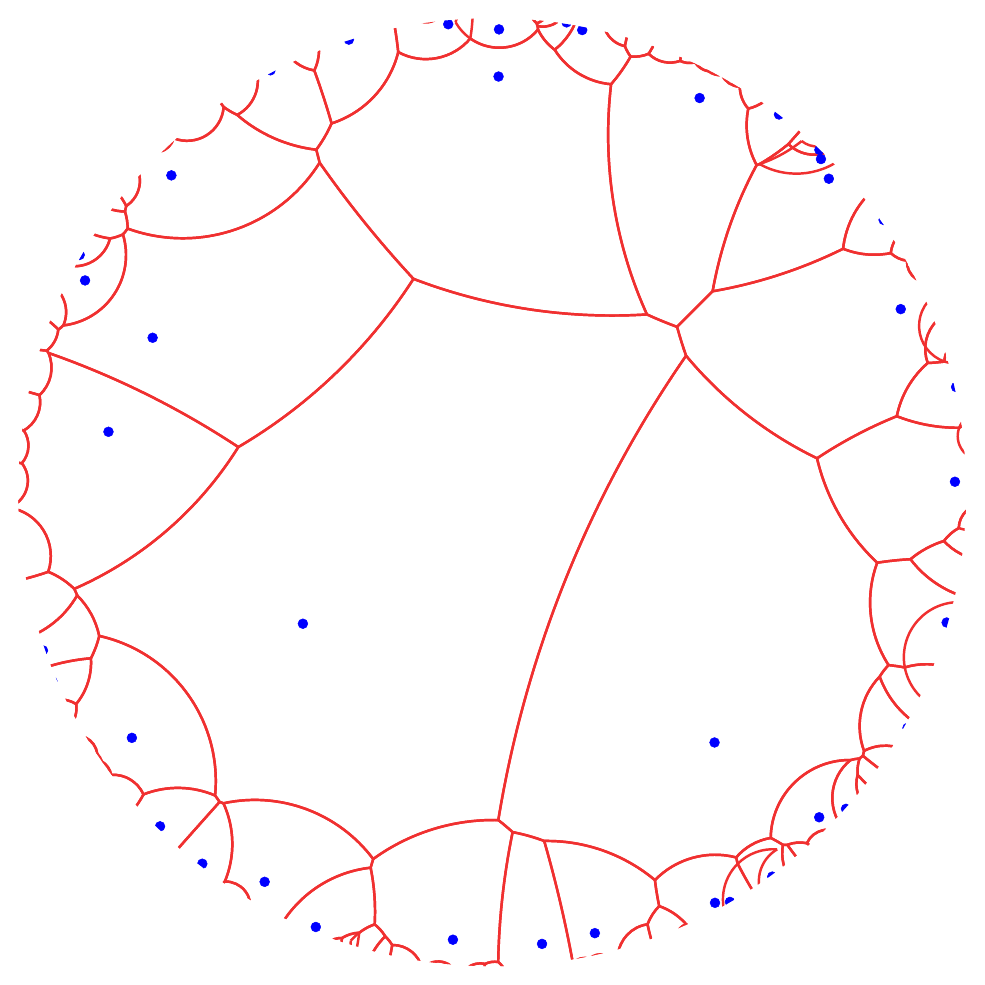}
    \caption{$s=3.$}
  \end{subfigure}
  \begin{subfigure}[t]{0.45\textwidth}
    \includegraphics[width=\textwidth]{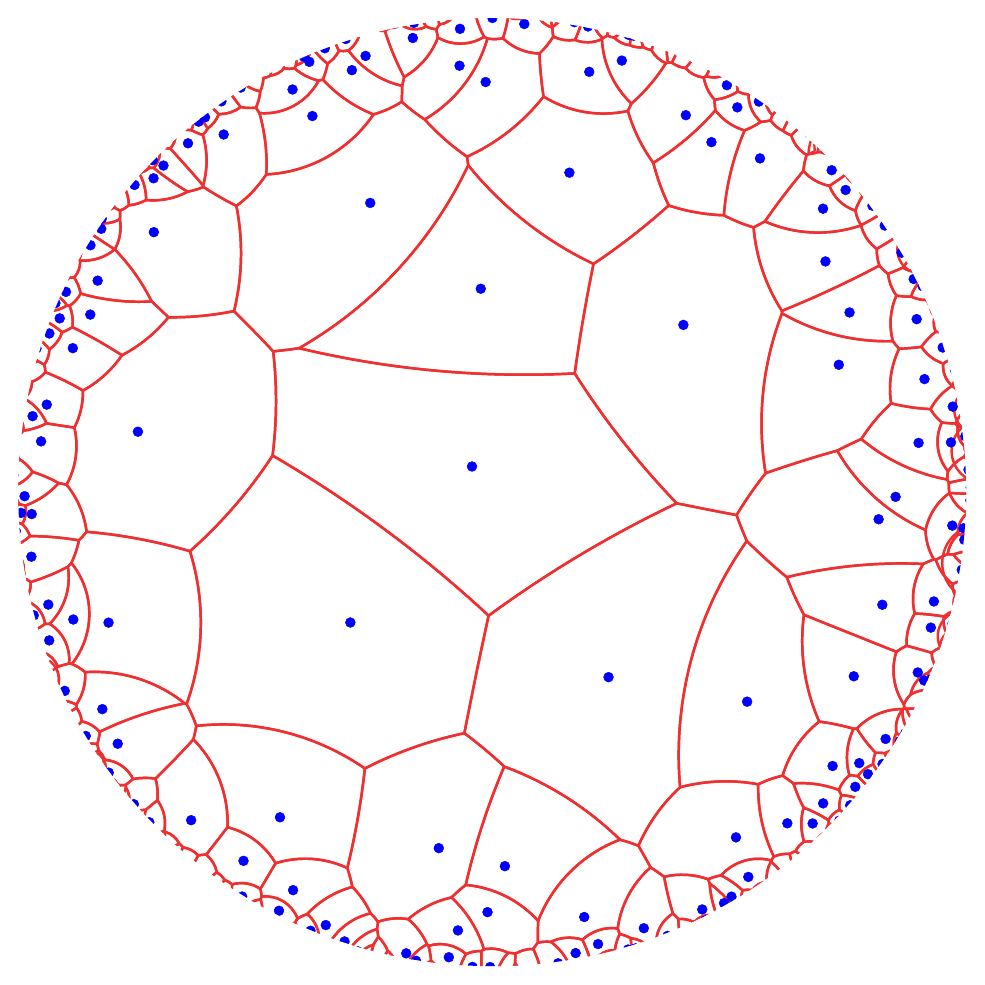}
    \caption{$s=10.$} 
  \end{subfigure}
  \begin{subfigure}[t]{0.45\textwidth}
    \includegraphics[width=\textwidth]{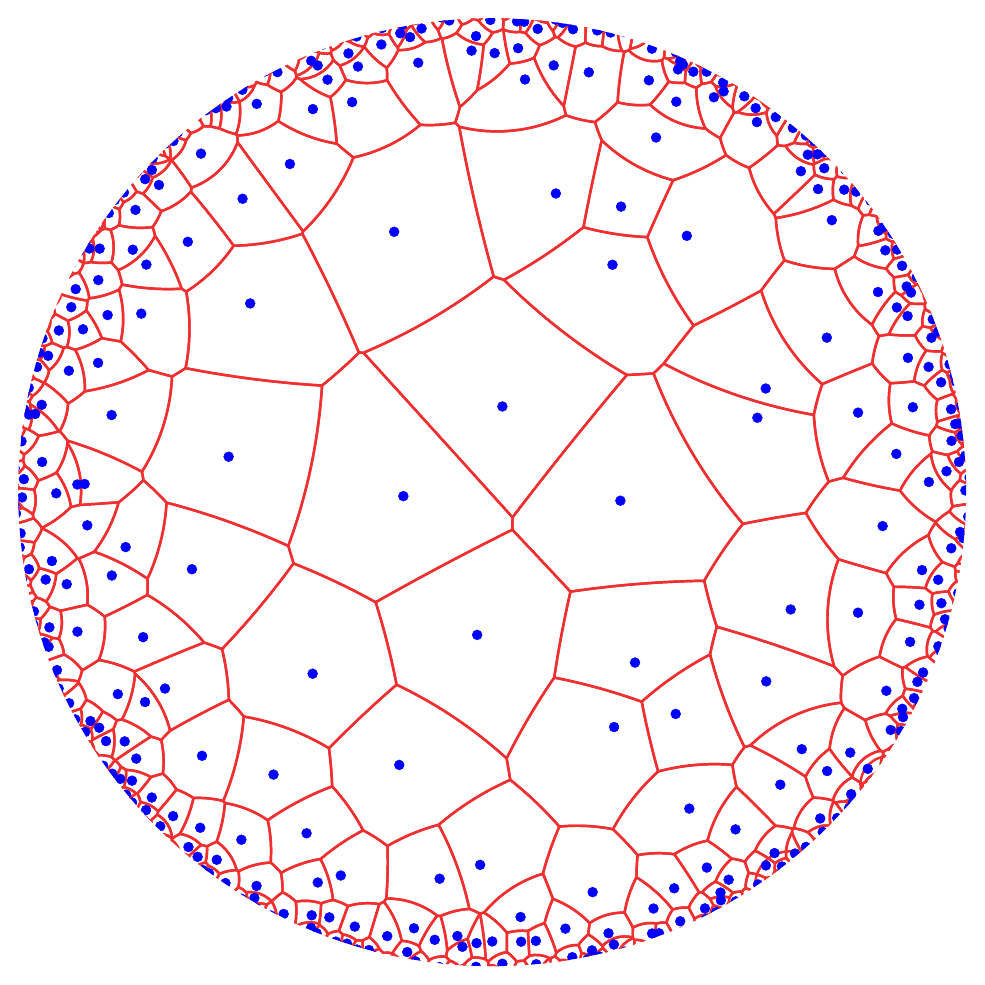}
    \caption{$s=20.$} 
  \end{subfigure}
  \begin{subfigure}[t]{0.45\textwidth}
    \includegraphics[width=\textwidth]{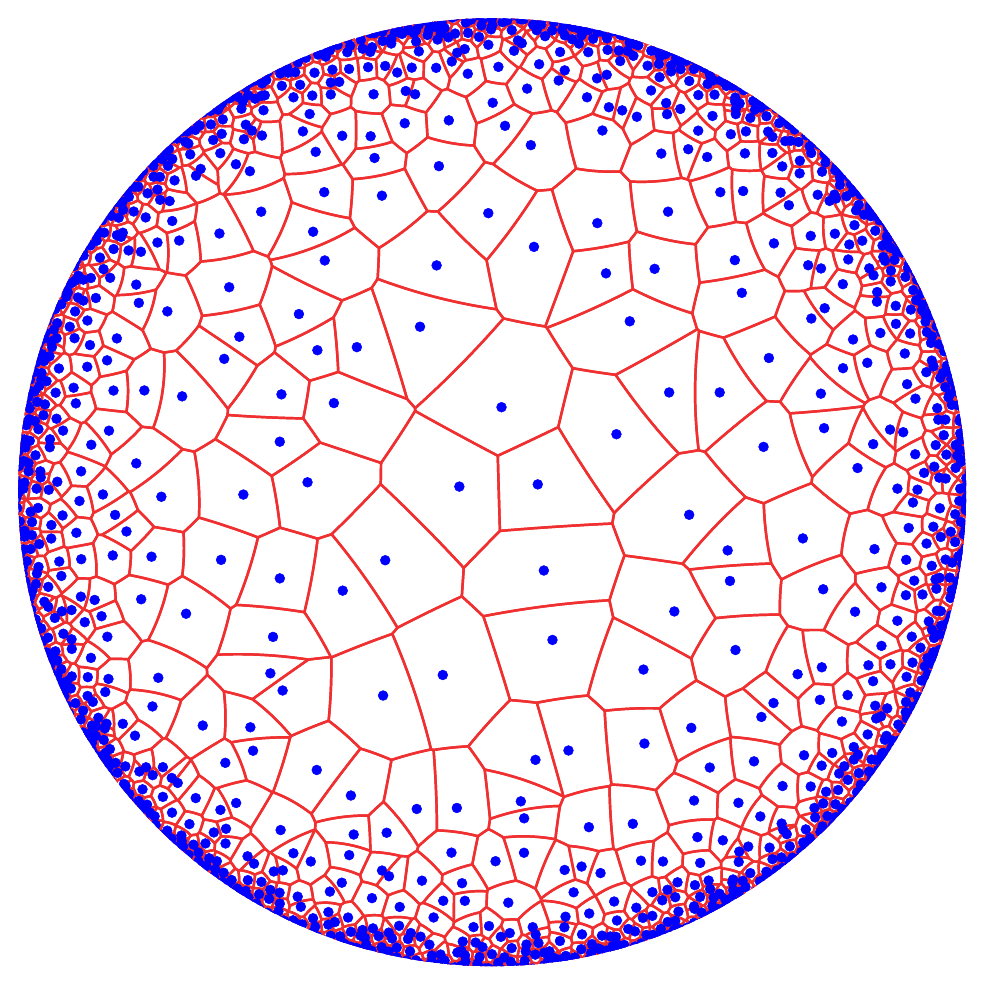}
    \caption{$s=80.$} 
  \end{subfigure}
  \caption{Four simulations of Berezin point process on Euclidean balls of size $0.99$ in the Poincar\'e model.  Norm taken as $\Gamma^2(\tfrac{s}{2})/\Gamma^2(\tfrac{s+1}{2})$ (c.f.\ Conjecture \ref{conj:norm}).  Subsampled from 40000 points hyperbolically uniform points, by defining a discrete Berezin kernel.  The discrete kernel can be identified with the action of the Berezin kernel on functions which are constant on the Voronoi cells formed by the 40000 seed points.}
  \label{fig:berezin}
\end{figure}

\subsection{ Relation to other kernels }

It may seem we have made an arbitrary choice in considering $\mathcal{K}_s(x,y) = \cosh(x,y)^{-\sigma(1+s)}.$  For example, the hyperbolic metric $\dH(x,y)$ is also negative type in any dimension, which follows as a corollary from the negativity of $\log\cosh(\dH(x,y))$ (see the discussion following \cite[Corollary 7.4]{FarautHarzallah}).

There is however a structure theorem defining all invariant positive definite kernels, which we will now describe, and which will show that in some sense there is only a $1$--parameter family of kernels possible.  
Define the invariant Laplacian $\Delta_h(f)(x) = \Delta (f \circ \phi_x) (0),$ which is up to a scalar multiple, the Laplace--Beltrami operator in coordinates (\cite[Remark 3.1.3]{Stoll}).
Define the kernel on $B \times S^{d-1}$ given by
\[
  P(a,\zeta)^{\sigma(1+s)} = \biggl(\frac{1-|a|^2}{[a,\zeta]^2}\biggr)^{\sigma(1+s)}.
\]
For $s=1,$ this defines the hyperbolic Poisson kernel on $B \times S^{d-1}$
\cite[(5.1.5)]{Stoll}.
More generally, from \cite{Helgason}, the mapping
\[
  f \mapsto \int_{S^{d-1}}P(\cdot ,\zeta)^{\sigma(1+s)}f(\zeta)\,d\vartheta(\zeta)
\]
for $f \in L^1(S^{d-1})$ maps densely into the space of all eigenfunctions of $\Delta_h$ with eigenvalue $4\sigma^2(s^2-1).$

From these eigenfunctions, we are particularly interested in the radial ones.
Define for complex $s$
\begin{equation}\label{eq:spherical}
  \Phi_{s}(r) = 
  (1-r^2)^{(1-s)\sigma} {}_2F_1((1-s)\sigma, -s\sigma + \tfrac 12 ; \sigma + \tfrac{1}{2} ; r^2),
\end{equation}
in terms of the Gauss hypergeometric function.
Then
\[
  \Phi_s(a) = 
  \int_{S^{d-1}}
  P(a,\zeta)^{\sigma(1+s)}
  \,d\vartheta(\zeta).
\]
We can also represent this function in an intrinsic way.  By using the hyperbolic law of cosines, if we let $\alpha$ be the angle formed between $\zeta \in S^{d-1}$ and $a$ from $0,$ then
\[
  \Phi_s(a) = 
  \tilde\Phi_s(\cosh(\dH(0,a))) 
  =\int_{S^{d-1}} ( \cosh(\dH(0,a)) -\sinh(\dH(0,a)) \cos(\alpha))^{-\sigma(1+s)}\,d\vartheta. 
\]
(c.f.\ \cite[$\omega_s$, p.206]{FarautHarzallah}, \cite[(4.42)]{Gangolli}).
We also need the function
\begin{equation}\label{eq:GQF}
  \tilde Q( \cosh(\dH(0,a)))
  =
  \int\limits_{S^{d-1}} \log( \cosh(\dH(0,a)) -\sinh(\dH(0,a)) \cos(\alpha))\,d\vartheta, 
\end{equation}
which is proportional to the derivative of $\Phi_s(a)$ in $s$ at $s=-1.$

As an analogue of the L\'evy-Khintchine formula (also called the L\'evy-Khintchine-Schoenberg formula), we have the following representation due to \cite{Gangolli} (c.f.\ \cite{FarautHarzallah}).
\begin{theorem}[Gangolli-Faraut-Harzallah representation]
  \label{thm:gangulli}
  Any invariant symmetric positive definite kernel $K(x,y)$ has the form $e^{-\tilde{\psi}(\cosh(\dH(x,y)))}$ where
  \[
    \tilde \psi = c \tilde Q 
    + \int_{-1}^0 (1 - \tilde\Phi_s)\,d\mu_1(s) 
  + \int_{i\R} (1 - \tilde\Phi_s)\,d\mu_2(s)
  \]
  for $c \geq 0,$ $\mu_1$ and $\mu_2$ positive measures such that $\int (1+s)d\mu_1 < \infty$ and such that $\mu_2$ is symmetric and finite.
\end{theorem}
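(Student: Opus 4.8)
The plan is to deduce this from \emph{Schoenberg's theorem} (already used in the proof of Lemma~\ref{lem:pdefkernel}) together with the Lévy--Khintchine representation for invariant kernels of \emph{negative type} on the Gelfand pair $(SO_0(d,1),SO(d))$. By Schoenberg's theorem \cite[Theorem~C.3.2]{BHV}, the kernel $e^{-\tilde\psi(\cosh(\dH(x,y)))}$ and all of its positive powers are positive definite precisely when the invariant kernel $(x,y)\mapsto\tilde\psi(\cosh(\dH(x,y)))$ is conditionally negative definite, so it suffices to classify the invariant negative-type kernels (and, read in the sense of its billing as a Lévy--Khintchine--Schoenberg formula, this is what the theorem asserts). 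The easy (``$\Leftarrow$'') direction is then in hand: for each positive-definite spherical parameter $s$ the kernel $1-\tilde\Phi_s(\cosh(\dH(x,y)))$ is of negative type, since $\tilde\Phi_s(\cosh(\dH(\cdot,\cdot)))$ is positive definite with unit diagonal; the kernel $\tilde Q(\cosh(\dH(x,y)))$ of \eqref{eq:GQF} is of negative type, being $\sigma^{-1}$ times the pointwise limit of the negative-type kernels $(1+s)^{-1}(1-\tilde\Phi_s)$ as $s\downarrow-1$ (equivalently, it is the kernel governing Lévy Brownian motion indexed by $\Htwo^d$, as in \cite{Gangolli}); and nonnegative combinations and convergent integrals of negative-type kernels are again of negative type, the convergence being guaranteed by the two integrability hypotheses. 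Note $\tilde Q\ge0$ (pair $\alpha$ with $\pi-\alpha$ and use $\cosh^2-\sinh^2=1$) and $\tilde Q(1)=0$.

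For the converse I would use the harmonic analysis of the rank-one symmetric space $\Htwo^d=SO_0(d,1)/SO(d)$, where invariant kernels correspond to $K$-bi-invariant (radial) functions and the ``characters'' are the spherical functions $\tilde\Phi_s$ of \eqref{eq:spherical}, positive definite precisely for $s\in(-1,1)\cup i\R$, with $\tilde\Phi_s=\tilde\Phi_{-s}$ and $\tilde\Phi_{\pm1}\equiv1$. Given an invariant negative-type $\tilde\psi$, note $\tilde\psi\ge0$ and $\tilde\psi(1)=0$, so $e^{-t\tilde\psi(\cosh(\dH(\cdot,\cdot)))}$ is, for each $t>0$, a bounded continuous invariant positive-definite kernel with diagonal value $1$; by the spherical (Bochner--Godement) theorem there is thus a \emph{probability} measure $\nu_t$ on $(-1,1)\cup i\R$ with $e^{-t\tilde\psi(u)}=\int\tilde\Phi_s(u)\,d\nu_t(s)$, and by the symmetry $\tilde\Phi_s=\tilde\Phi_{-s}$ we may take $\nu_t$ supported on $(-1,0]\cup i\R$, its only boundary point being the trivial parameter $s=-1$. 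As $t\downarrow0$ the left side tends to $1=\tilde\Phi_{-1}$, so $\nu_t\Rightarrow\delta_{-1}$, and writing $\tilde\psi(u)=\lim_{t\downarrow0}t^{-1}\bigl(1-e^{-t\tilde\psi(u)}\bigr)=\lim_{t\downarrow0}t^{-1}\int\bigl(\tilde\Phi_{-1}(u)-\tilde\Phi_s(u)\bigr)\,d\nu_t(s)$, one extracts the Lévy measure $\mu_1$ on $(-1,0]$ and $\mu_2$ on $i\R$ as the vague limit of $t^{-1}\nu_t$ restricted away from $s=-1$ (a Helly-type compactness argument using the continuity of $s\mapsto\tilde\Phi_s(u)$), while the $\nu_t$-mass escaping to $s=-1$ produces, through the first-order expansion $1-\tilde\Phi_s(u)=\sigma(1+s)\tilde Q(u)+o(1+s)$, the drift term $c\tilde Q$ with $c\ge0$.

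It remains to pin down the side conditions and close the loop. On $i\R$ the spherical functions are real-valued matrix coefficients of unitary representations, so $1-\tilde\Phi_s\in[0,2]$ there; picking any $u\ne1$ with $\tilde\Phi_s(u)<1$ for a.e.\ $s$ (which holds since $s\mapsto\tilde\Phi_s(u)$ is real-analytic and not identically $1$), convergence of $\int_{i\R}(1-\tilde\Phi_s(u))\,d\mu_2(s)$ is equivalent to finiteness of $\mu_2$, and symmetry of $\mu_2$ is inherited from $\tilde\Phi_s=\tilde\Phi_{-s}$. Near $s=-1$, since $1-\tilde\Phi_s(u)$ vanishes to exactly first order in $1+s$ (with coefficient $\sigma\tilde Q(u)>0$ for $u\ne1$), convergence of $\int_{-1}^0(1-\tilde\Phi_s(u))\,d\mu_1(s)$ forces and is forced by $\int(1+s)\,d\mu_1<\infty$. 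This produces the representation $\tilde\psi=c\tilde Q+\int_{-1}^0(1-\tilde\Phi_s)\,d\mu_1(s)+\int_{i\R}(1-\tilde\Phi_s)\,d\mu_2(s)$, and combining with the ``$\Leftarrow$'' direction of the first paragraph (and Schoenberg once more) gives the stated equivalence with $K=e^{-\tilde\psi(\cosh(\dH(\cdot,\cdot)))}$.

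The main obstacle is the behavior near the trivial parameter $s=-1$: one needs the expansion $\tilde\Phi_s(u)=1-\sigma(1+s)\tilde Q(u)+o(1+s)$ with enough uniformity in $u$ — uniformly on compacts, and with the error controlled relative to the linear-in-$\dH$ growth of $\tilde Q(u)$ — to both split the $t\downarrow0$ limit legitimately into a Lévy part over $s\ne-1$ and a drift $c\tilde Q$, and to identify $\int(1+s)\,d\mu_1<\infty$ as the \emph{exact} integrability condition. This is where the hypergeometric form \eqref{eq:spherical}, the identity $\partial_s\tilde\Phi_s|_{s=-1}=-\sigma\tilde Q$ (differentiating $\Phi_s(a)=\int_{S^{d-1}}P(a,\zeta)^{\sigma(1+s)}\,d\vartheta(\zeta)$ under the integral), and the Harish-Chandra asymptotics of $\tilde\Phi_s$ at large argument (see \cite{Helgason}) do the work; a secondary technical point is the vague-compactness argument producing $\mu_1,\mu_2$ on the locally compact parameter space $((-1,0]\cup i\R)\setminus\{-1\}$. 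If one prefers not to rebuild this machinery, the economical route is to invoke the general Lévy--Khintchine theorem for negative-type functions on Riemannian symmetric spaces of \cite{Gangolli} (see also \cite{FarautHarzallah}), which furnishes the spectral measure and drift abstractly over the spherical dual; then the only remaining work is the bookkeeping that identifies the abstract data with the explicit $\tilde\Phi_s$, the explicit $\tilde Q$, the ranges $(-1,0]$ and $i\R$, and the displayed finiteness conditions.
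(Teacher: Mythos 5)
The paper does not prove this statement: it is imported as the L\'evy--Khintchine--Schoenberg formula for the Gelfand pair $(SO_0(d,1),SO(d))$ and cited directly to \cite{Gangolli} (c.f.\ \cite{FarautHarzallah}), with no argument given. Your proposal therefore goes a genuinely different route by reconstructing the proof. The reduction via Schoenberg to classifying invariant negative-type kernels is the right frame, and your easy direction is complete as written: $1-\tilde\Phi_s$ is of negative type because $\tilde\Phi_s$ is positive definite with unit diagonal, $\tilde Q=\sigma^{-1}\lim_{s\downarrow-1}(1+s)^{-1}(1-\tilde\Phi_s)$ is consistent with the paper's remark that $\tilde Q$ is proportional to $\partial_s\Phi_s|_{s=-1}$, and the nonnegativity of $\tilde Q$ by pairing $\alpha$ with $\pi-\alpha$ checks out since $\log(c-\sinh\cos\alpha)+\log(c+\sinh\cos\alpha)\geq\log(c^2-\sinh^2)=0$. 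The converse is organized exactly as in the cited sources: Bochner--Godement applied to the semigroup $e^{-t\tilde\psi}$, folding the spherical dual onto $(-1,0]\cup i\R$ by $\tilde\Phi_s=\tilde\Phi_{-s}$, extracting $\mu_1,\mu_2$ as vague limits of $t^{-1}\nu_t$ away from the trivial parameter, and recovering the drift $c\tilde Q$ from the mass escaping to $s=-1$. You correctly identify the two places where real work remains --- the uniformity of the expansion $1-\tilde\Phi_s=\sigma(1+s)\tilde Q+o(1+s)$ relative to the growth of $\tilde Q$, and the compactness argument on the one-point compactification of the parameter space --- and you do not carry them out, so as a self-contained proof this is a sketch rather than a proof. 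But as a derivation it is faithful to the actual argument of \cite{Gangolli} and \cite{FarautHarzallah}, and it buys an explicit identification of the abstract spectral data with the concrete $\tilde\Phi_s$, $\tilde Q$, and the stated integrability conditions, which the paper simply asserts.
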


Note that in this representation, all $\tilde \Phi_s$ with $s \in (-1,0)$ and $s \in i\R$ are uniformly bounded.  Hence for any $\tilde \psi$ having the Gangolli representation for $c>0,$
\[
  \tilde \psi(\cosh(0,x)) \sim c\tilde Q(\cosh(0,x)) \sim c\log(\cosh(\dH(0,x)))
  \quad
  \text{ as }
  |x|\to \infty.
\]
Thus we see that any invariant positive definite kernel is in some sense asymptotic to $\mathcal{K}_s$ for some choice of $s.$  Moreover, we note that for any pair $K_{\psi_1}$ and $K_{\psi_2}$ of invariant kernels, which are bounded on $L^2(dV),$ they commute.

\begin{corollary}
  Let $\tilde\psi$ be given by Theorem \ref{thm:gangulli} with $c > \sigma.$  Then $K_\psi(x,y) = e^{-\tilde\psi(\cosh(\dH(x,y)))}$ has finite operator norm $\mathcal{N}_\psi,$ and so $K_\psi/\mathcal{N}_\psi$ defines a stationary determinantal point process on $\mathbb{H}^d.$
\end{corollary}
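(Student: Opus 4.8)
The plan is to verify the two hypotheses of the Machi--Soshnikov theorem for the operator $K_\psi/\mathcal{N}_\psi$ on $L^2(dV)$: namely that it is a non-negative, locally trace class contraction. Positive definiteness is immediate from Theorem~\ref{thm:gangulli}, which asserts that $K_\psi(x,y)=e^{-\tilde\psi(\cosh(\dH(x,y)))}$ is an invariant symmetric positive definite kernel; combined with the bound $\mathcal{N}_\psi<\infty$ this makes $K_\psi/\mathcal{N}_\psi$ a non-negative contraction. Local trace class follows exactly as for $\mathcal{K}_s$: the diagonal of $K_\psi$ is $e^{-\tilde\psi(1)}$, a constant, so $\int_{\Omega} K_\psi(x,x)\,dV(x)<\infty$ for every bounded $\Omega$, and the restriction of a positive definite kernel to a set of finite measure with integrable diagonal is trace class. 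Stationarity of the resulting point process then follows by the same change-of-variables argument used for $\Delta_{s,d}$: both $K_\psi$ (being invariant) and $dV$ (being Haar measure) are preserved by M\"obius transformations, so the pushforward of the process under any isometry is determinantal with the same kernel.

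The crux of the matter, and the only nontrivial point, is establishing $\mathcal{N}_\psi=\|K_\psi\|_{op}<\infty$ when $c>\sigma$. Here I would use the asymptotic identity derived just above the corollary: since all $\tilde\Phi_s$ with $s\in(-1,0)\cup i\R$ are uniformly bounded, the integral terms in the Gangolli representation contribute a bounded quantity to $\tilde\psi$, and hence
\[
  \tilde\psi(\cosh(\dH(0,x))) = c\,\tilde Q(\cosh(\dH(0,x))) + O(1) = c\log(\cosh(\dH(0,x))) + O(1)
\]
as $|x|\to\infty$, using $\tilde Q(\cosh(\dH(0,x)))\sim\log(\cosh(\dH(0,x)))$. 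Consequently there is a constant $A$ with $K_\psi(x,y)\le A\,\cosh(\dH(x,y))^{-c} = A\,\mathcal{K}_{s'}(x,y)$ pointwise, where $s'$ is chosen so that $\sigma(1+s')=c$, i.e.\ $s'=c/\sigma-1>0$. Since $c>\sigma$ forces $s'>0$, the preceding lemma gives $\mathcal{N}_{s',d}<\infty$, and by positivity of both kernels and the sandwich $0\le K_\psi\le A\,\mathcal{K}_{s'}$ (as quadratic forms on non-negative test functions, or directly via the Schur test applied to the dominating kernel) we conclude $\mathcal{N}_\psi\le A\,\mathcal{N}_{s',d}<\infty$. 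Alternatively, one can simply run the Schur test of the previous lemma verbatim with the test function $p(y)=\cosh(\dH(0,y))^{-\beta}$ for a suitable $\beta\in(\sigma(1-s'),\sigma)$, since the dominating decay rate $c$ is all that entered that computation.

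I expect the main obstacle to be purely bookkeeping: pinning down the $O(1)$ error in the Gangolli representation precisely enough to extract a clean pointwise domination $K_\psi\le A\,\mathcal{K}_{s'}$, and checking that the condition $c>\sigma$ is exactly what is needed (it corresponds, via $\sigma(1+s')=c$, to $s'>0$, which is the sharp threshold for finiteness of $\mathcal{N}_{s',d}$ in the lemma). One subtlety worth a remark: near-boundary behavior ($|x|\to\infty$ in hyperbolic distance) controls integrability since the volume measure $dV$ has exponential growth exponent $2\sigma=d-1$, and $c>\sigma$ does \emph{not} by itself beat this exponent---but the Schur test does not require pointwise integrability of $K_\psi(x,\cdot)$, only the weighted estimate $\sup_x p(x)^{-1}\int K_\psi(x,y)p(y)\,dV(y)<\infty$, and the window $\beta\in(\sigma(1-s'),\sigma)$ is nonempty precisely because $s'>0$. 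Once the domination is in hand, the corollary is immediate.
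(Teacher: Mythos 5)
Your proof is correct and takes essentially the same approach as the paper: both reduce to the Berezin kernel $\mathcal{K}_{s'}$ with $\sigma(1+s')=c$ (so $s'>0$ exactly when $c>\sigma$) by using the Gangolli asymptotics $\tilde\psi\sim c\,\tilde Q\sim c\log\cosh$ to show $K_\psi$ is uniformly comparable to $\mathcal{K}_{s'}$, and then invoke the earlier Schur-test norm bound. The supplementary checks you spell out (positive definiteness, local trace class via the constant diagonal, stationarity) are left implicit in the paper.
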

\begin{proof}
  As $c > \sigma,$ there is $s>0$ so that $c=\sigma(1+s)$.
  Then the kernel $K_\psi(x,y)/K_{s}(x,y)$ is bounded from above and below uniformly, and so the estimate of Lemma \ref{lem:pdefkernel} applies to $K_\psi(x,y).$  
\end{proof}

We conclude by noting that in some cases, these functions can be explicitly computed.
In dimension $2,$ \cite[(4.45)]{Gangolli} we have the explicit representation
\begin{equation}\label{eq:d2Q}
  \tilde Q( \zeta)
  =\log\biggl(\frac{1+\cosh(\zeta)}{2}\biggr).
\end{equation}
In dimension $3,$ we have \cite[(4.57)]{Gangolli}
\begin{equation}\label{eq:d3Q}
  \tilde Q(\zeta)
  = \zeta\frac{\cosh(\zeta)}{\sinh(\zeta)}-1
  \quad
  \text{and}
  \quad
  \Phi_s(\zeta)
  =\frac{\sinh( \zeta(1+s))}{(1+s)\sinh(\zeta)}.
\end{equation}

\subsubsection*{Hermitian kernels}

The most studied determinantal processes in dimension $2$ are given by Hermitian kernels.  The zero set of the hyperbolic Gaussian analytic function \cite{PeresVirag, HKPV} has kernel $(1-w\bar{z})^{-2}$ acting on $L^2(B)$ under Lebesgue measure.  The zeros of $\det ( \sum_{k=0}^\infty G_k z^k)$ for iid $L \times L$ complex Gaussian matrices similarly produce a hermitian kernel $(1-w \bar z)^{-L-1}$ acting on $L^2(\,dx(1-x^2)^{(L-1)})$ \cite{Krishnapur} (c.f.\ \cite{JancoviciTellez,Comtet}, where these also appear).  See also \cite{DemniLazag} for a further generalization of this family having Hermitian kernels, and see the discussion in \cite{BufetovQiu} for a discussion of stationary determinantal point processes in higher dimensional real and complex hyperbolic spaces. 

There are some exact connections between these Hermitian kernels and the Berezin kernels.  The kernel
\[
  H_{L} \coloneqq \frac{(1-|w|^2)^{\tfrac{L+1}{2}}(1-|z|^2)^{\tfrac{L+1}{2}}}{(1-w\bar{z})^{L+1}}
\]
acting on $L^2(dV)$ induces the same determinantal process as those of \cite{Krishnapur}.  Then $|H_{L}(w,z)| = \mathcal{K}_{L}(x,y)$ on taking $w=x_1+ix_2$ and $z=y_1+iy_2.$  As a corollary, the determinantal processes induced by $H_L$ and $\mathcal{K}_L$ have the same $2$-point function (up to a scaling).

While these Hermitian kernels seem to induce incompatible processes with those coming from real symmetric kernels, we have not been able to rule it out.  So we pose the following question:
\begin{question}
  Is there real symmetric kernel $K : \Htwo^2 \times \Htwo^2$ acting on $L^2(dV)$ that induces the same law as 
  \(H_L
  \)
  acting on $L^2(dV)$? If no, is there any precise relationship between the determinantal process with kernel (up to scaling) $\mathcal{K}_L$ and that with $H_L?$
\end{question}

\subsection{The relation of the Berezin kernels to harmonic functions}
\label{sec:berezinthings}

We highlight a few connections of the Berezin kernel to the Laplacian.  We begin with noting that harmonic functions are in fact generalized eigenfunctions of these kernels.
\begin{proposition}\label{prop:stoll}
  For a hyperbolic harmonic function $f$ for which $\int_{\mathbb{H}^d} |f(x)|(1-|x|^2)^{\sigma(1+s)}\,dV(x) < \infty$ and
  for all $x \in \mathbb{H}^d,$
  \[
    f(x) = \frac{1}{c_s}\int \mathcal{K}_{s}(x,y)f(y)\,dV(y)
    \quad
    \text{where}
    \quad
    c_s = \frac{d}{2}\frac{\Gamma(\tfrac{d}{2})\Gamma(\sigma(1+s)-1)}{\Gamma( \tfrac{d}{2} + \sigma(1+s)-1)}.
  \]
\end{proposition}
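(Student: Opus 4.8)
The plan is to reduce to the single point $x=0$ via the M\"obius invariance of $\mathcal{K}_s$ and of $dV$, and then to use the mean value property of hyperbolic harmonic functions at the origin. For the reduction, substitute $y=\phi_x(z)$ in $\int \mathcal{K}_s(x,y)f(y)\,dV(y)$; since $\phi_x$ is an isometry ($dV\circ\phi_x=dV$) and an involution ($\phi_x\circ\phi_x=\mathrm{id}$, so $\mathcal{K}_s(x,\phi_x(z))=(1-|z|^2)^{\sigma(1+s)}=\mathcal{K}_s(0,z)$), one obtains
\[
  \int \mathcal{K}_s(x,y)f(y)\,dV(y)=\int \mathcal{K}_s(0,z)\,(f\circ\phi_x)(z)\,dV(z).
\]
Because $\Delta_h$ is invariant, $f\circ\phi_x$ is again hyperbolic harmonic, and by \eqref{eq:mobiusmodulus} the factor $\tfrac{1-|x|^2}{[z,x]^2}$ is bounded between two positive constants on all of $B$ for fixed $x$, so $f\circ\phi_x$ inherits the integrability hypothesis. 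Hence it suffices to show that $g(0)=\tfrac{1}{c_s}\int_B g(y)(1-|y|^2)^{\sigma(1+s)}\,dV(y)$ for every hyperbolic harmonic $g$ with $\int_B|g(y)|(1-|y|^2)^{\sigma(1+s)}\,dV(y)<\infty$.

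For this I would write the integral in polar coordinates $y=r\omega$ with $r\in[0,1)$ and $\omega\in S^{d-1}$; using the formula for $dV$, the integrand becomes a constant multiple of $(1-r^2)^{\sigma(1+s)-d}r^{d-1}$ times $g(r\omega)$, and the integrability hypothesis (which forces $s>1$, equivalently $\sigma(1+s)-d>-1$) justifies Fubini. The inner integral $\int_{S^{d-1}}g(r\omega)\,d\omega$ equals $g(0)$ times the surface area of $S^{d-1}$, for every $r\in(0,1)$: the Euclidean spheres centered at $0$ are exactly the hyperbolic geodesic spheres centered at $0$, on which the hyperbolic surface measure is rotation invariant and hence a constant multiple of the Euclidean one, so the mean value property of hyperbolic harmonic functions (see \cite{Stoll}) collapses to the classical spherical mean. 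What remains is the elementary radial integral $\int_0^1(1-r^2)^{\sigma(1+s)-d}r^{d-1}\,dr$, a Beta integral; evaluating it, together with the surface area of $S^{d-1}$ and the normalization of $dV$, pins down the constant $c_s$.

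The conceptual content is entirely in the first two steps; what is left is bookkeeping. The one point requiring a little care is the transfer of the integrability hypothesis under $\phi_x$, for which one uses the two-sided bound $(1-|x||z|)^2\le [z,x]^2\le (1+|x||z|)^2$ in \eqref{eq:mobiusmodulus} with $x$ fixed. One should also note that the mean value property is used for \emph{all} hyperbolic harmonic $g$ of the permitted growth, not only for finite superpositions of the kernels $P(\cdot,\zeta)^{d-1}$; but every solution of $\Delta_h g=0$ is real analytic and satisfies the mean value identity pointwise, so no density or truncation argument is needed. I expect the extraction of the exact value of $c_s$ from the Beta function to be the most error-prone step, but it is a routine $\Gamma$-function computation.
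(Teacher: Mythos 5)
Your strategy is sound and is essentially the standard argument; note that the paper itself offers no proof of this proposition, only the citation to Stoll's Theorem 10.1.3, so there is no internal argument to compare against. The reduction to $x=0$ is exactly right: $\phi_x$ is an involutive isometry, so $\mathcal{K}_s(x,\phi_x(z))=(1-|z|^2)^{\sigma(1+s)}=\mathcal{K}_s(0,z)$ and $dV$ is preserved, while \eqref{eq:mobiusmodulus} together with $(1-|x|\,|y|)^2\le[y,x]^2\le(1+|x|\,|y|)^2$ shows $1-|\phi_x(y)|^2$ is comparable to $1-|y|^2$ for fixed $x$, so the integrability hypothesis transfers to $f\circ\phi_x$. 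The spherical mean value identity at the origin is also valid: averaging a hyperbolic harmonic $g$ over the rotation group yields a radial hyperbolic harmonic function that is smooth at $0$, and the radial ODE $u''+(d-1)\operatorname{coth}(\rho)u'=0$ admits only constants among solutions regular at the origin, so the average over every Euclidean sphere centered at $0$ (which is a geodesic sphere, with the same rotation-invariant probability measure) equals $g(0)$.

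The one step you deferred as ``routine bookkeeping'' is precisely where the statement and the proof part ways, so you must carry it out. With $dV(y)=\tfrac{dy}{c_d(1-|y|^2)^{d}}$ and $|S^{d-1}|=d\,c_d$, your argument gives
\[
\int_B \mathcal{K}_s(0,y)g(y)\,dV(y)=d\,g(0)\int_0^1(1-r^2)^{\sigma(1+s)-d}r^{d-1}\,dr
=\frac{d}{2}\,\frac{\Gamma(\tfrac d2)\,\Gamma(\sigma(s-1))}{\Gamma(\sigma(s-1)+\tfrac d2)}\,g(0),
\]
using $\sigma(1+s)-d+1=\sigma(s-1)$. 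This coincides with the displayed $c_s=\frac d2\frac{\Gamma(d/2)\Gamma(\sigma(1+s)-1)}{\Gamma(d/2+\sigma(1+s)-1)}$ only when $d=2$, where both reduce to $\Gamma(\tfrac{s-1}{2})/\Gamma(\tfrac{s+1}{2})$. A direct check with $f\equiv 1$, $d=3$, $s=3$ (admissible since $s>1$) gives $\int_B\mathcal{K}_3(0,y)\,dV(y)=\tfrac{1}{c_3}\int_B(1-|y|^2)\,dy=\tfrac25$, whereas the displayed constant evaluates to $\tfrac32\Gamma(\tfrac32)\Gamma(3)/\Gamma(\tfrac92)=\tfrac{8}{35}$. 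So, relative to the paper's own definitions of $\mathcal{K}_s$ and $dV$, the stated constant is incorrect for $d\ge3$ (most likely a mistranscription from Stoll, whose normalization of the kernel exponent or reference measure differs); your proof, once completed, produces the corrected value. Do not leave the $\Gamma$-computation implicit.
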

\noindent This is \cite[Theorem 10.1.3]{Stoll}.

In $2$--dimensions, an even stronger statement is possible.
Define the entire function in $z \in \C$, for any $s > 1,$ by
\[
  G_{s}(z) = \frac{s-1}{2}\prod_{k=1}^\infty\biggl( 1-\frac{z}{4(\tfrac{s-3}{2}+k)(\tfrac{s-1}{2}+k)}\biggr).
\]
Then formally, we have 
\[
  G_{s}(\Delta_h) \mathcal{K}_s(x,y) = \delta_{h,y}(x),
\]
where $\delta_{h,y}(x)$ is the distribution satisfying $\int_{\mathbb{H}^d} \delta_{h,y}(x)f(x)\,dV(x) = f(y)$ for all bounded continuous functions $f$ on $\mathbb{H}^d$ for which $\int\mathcal{K}_s(0,y) |f(y)|\,dV(y) < \infty$  (c.f.\ \cite[Section 2]{Hedenmalm}, wherein $\alpha = \frac{s-3}{2}$).  

In the case of $s=3,$ this series has a closed form given by
\[
  G_{3}(-4w(1-w)) = \frac{\sin(\pi w)}{\pi w(1-w)},
\]
(see \cite[Proposition 2.9]{Hedenmalm} or \cite{Rudin}).
Following the same argument as in \cite[Proposition 2.9]{Hedenmalm}, we can also give a formula for general $s>1$ 
\[
  G_s(-4w(1-w)) = \frac{\Gamma^2(\frac{s+1}{2})}{\Gamma(\frac{s}{2}+(w-\frac{1}{2}))\Gamma(\frac{s}{2}-(w-\frac{1}{2}))}. 
\]
Using this function we see that any eigenfunction $f$ of the Laplacian with eigenvalue $\lambda$ for which the integral of $\int_{\mathbb{H}^2} \mathcal{K}_s(0,x)|f(x)|\,dV(x) < \infty,$ it can be seen $\int_{\mathbb{H}^2} \mathcal{K}_s(x,y)f(y)\,dV(y) = f(x)/G_s(\lambda)$ (see \cite[Proposition 2.5,2.7]{Hedenmalm}).

The eigenfunctions corresponding to real eigenvalues are not in $L^2(dV)$ (see \cite[Corollary 5.5.8]{Stoll}), but the radial eigenfunction with eigenvalue $-1$ just barely fails, by logarithmic factors.  
Hence, we expect that there are almost-eigenfunctions in $L^2(dV)$ which approximately behave like this eigenfunction.  This would lead to a lower bound on $\mathcal{N}_{s.d}$ of $1/G_s(-1),$ which corresponds to taking $w = \frac 12.$  Hence, we conjecture:
\begin{conjecture}\label{conj:norm}
  For any $s \geq 1,$
  \[
    \mathcal{N}_{s,2} = \frac
    {\Gamma^2(\tfrac{s}{2})}
    {\Gamma^2(\tfrac{s+1}{2})}.
  \]
\end{conjecture}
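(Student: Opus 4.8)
The plan is to compute $\mathcal{N}_{s,2}$ by harmonic analysis on $\mathbb{H}^2=\mathrm{PSL}_2(\R)/\mathrm{SO}(2)$. Since $\mathcal{K}_s$ is an invariant symmetric kernel it is convolution by the radial function $k_s(r)=\cosh(r)^{-(1+s)/2}$, so the induced operator on $L^2(dV)$ commutes with the isometric $G$--action and is therefore diagonalized by the spherical (Helgason--Fourier) transform: it acts on the Plancherel fibre with $\Delta_h$--eigenvalue $\lambda$ by multiplication by the spherical transform $\widehat{k}_s(\lambda)$. Hence
\[
  \mathcal{N}_{s,2}=\|\mathcal{K}_s\|_{op}=\sup_{\lambda}\,\bigl|\widehat{k}_s(\lambda)\bigr|,
\]
the supremum over the $L^2$--spectrum of $\Delta_h$, which on $\mathbb{H}^2$ is exactly $(-\infty,-1]$ (only the spherical principal series occurs in the Plancherel decomposition, and there is no complementary series). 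So the problem reduces to identifying $\widehat{k}_s$ and maximizing $|\widehat{k}_s(\lambda)|$ over $\lambda\le -1$.

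Next I would identify the multiplier using the facts already recorded above. For an eigenfunction $f$ of $\Delta_h$ with eigenvalue $\lambda$ satisfying $\int_{\mathbb{H}^2}\mathcal{K}_s(0,x)|f(x)|\,dV(x)<\infty$ one has $\int_{\mathbb{H}^2}\mathcal{K}_s(x,y)f(y)\,dV(y)=f(x)/G_s(\lambda)$. Applying this to the elementary spherical function $\varphi_\lambda$ with eigenvalue $\lambda$ yields $\widehat{k}_s(\lambda)=1/G_s(\lambda)$; the integrability hypothesis holds for every $s>0$ because $\mathcal{K}_s(0,\cdot)$ decays like $e^{-(1+s)r/2}$, $|\varphi_\lambda|\le C(1+r)e^{-r/2}$ uniformly for $\lambda\le -1$, and $dV$ grows like $e^{r}\,dr$, making the integrand $O\bigl((1+r)e^{-sr/2}\bigr)$. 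Using the explicit value $G_s(-4w(1-w))=\Gamma^2(\tfrac{s+1}{2})\big/\bigl(\Gamma(\tfrac{s}{2}+w-\tfrac12)\Gamma(\tfrac{s}{2}-w+\tfrac12)\bigr)$ and parametrizing the spectrum by $\lambda=-4w(1-w)$ with $w=\tfrac12+\tfrac{i\tau}{2}$, $\tau\in\R$, so that $\lambda=-(1+\tau^2)$ sweeps out $(-\infty,-1]$, we get
\[
  \widehat{k}_s\bigl(-(1+\tau^2)\bigr)=\frac{\Gamma(\tfrac{s}{2}+\tfrac{i\tau}{2})\Gamma(\tfrac{s}{2}-\tfrac{i\tau}{2})}{\Gamma^2(\tfrac{s+1}{2})}=\frac{\bigl|\Gamma(\tfrac{s}{2}+\tfrac{i\tau}{2})\bigr|^2}{\Gamma^2(\tfrac{s+1}{2})}>0,
\]
consistent with positive-definiteness of $\mathcal{K}_s$.

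It then remains to maximize over $\tau\in\R$. By the classical product identity $|\Gamma(x+iy)|^2=\Gamma(x)^2\prod_{n\ge0}\bigl(1+\tfrac{y^2}{(x+n)^2}\bigr)^{-1}$, valid for $x>0$, the map $\tau\mapsto|\Gamma(\tfrac{s}{2}+\tfrac{i\tau}{2})|^2$ is maximized at $\tau=0$ with value $\Gamma(\tfrac{s}{2})^2$; this maximum sits at the bottom $\lambda=-1$ of the spectrum (the heuristic identified in the text, $\mathcal{N}_{s,2}\ge 1/G_s(-1)$, is thus the truth). Hence $\mathcal{N}_{s,2}=\Gamma^2(\tfrac{s}{2})/\Gamma^2(\tfrac{s+1}{2})$ for $s>1$. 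The endpoint $s=1$, where $k_s\notin L^1(dV)$, follows from the $s>1$ case: $s\mapsto\mathcal{N}_{s,2}$ is decreasing (the Remark on $\mathcal{K}_{s+t}=\mathcal{K}_s\mathcal{K}_t$), $\mathcal{K}_s\uparrow\mathcal{K}_1$ pointwise as $s\downarrow1$, and since $\mathcal{K}_1$ is a positive operator with positive kernel its norm is attained on nonnegative test functions, so monotone convergence gives $\mathcal{N}_{1,2}=\lim_{s\downarrow1}\mathcal{N}_{s,2}=\pi$. (In fact the same argument yields the formula for all $s>0$.)

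The main obstacle is making the diagonalization step fully rigorous, i.e.\ justifying that $\|\mathcal{K}_s\|_{op}$ is the supremum of the multiplier over the Plancherel support and that this multiplier is $1/G_s$. For $s>1$, $k_s\in L^1(dV)$ and this is the textbook statement about convolution operators on the Gelfand pair $(\mathrm{PSL}_2(\R),\mathrm{SO}(2))$; for general $s>0$ one instead notes that $\mathcal{K}_s$ is bounded (Schur test, as in the preceding lemma), commutes with $G$, hence acts as a scalar on each irreducible Plancherel fibre by Schur's lemma, and identifies that scalar by pairing $\mathcal{K}_sf$ against elementary spherical functions for compactly supported $f$—where Fubini is legitimate because $\mathcal{K}_s|\varphi_\lambda|$ is finite and locally bounded—together with the eigenfunction identity above. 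One must also confirm there is no spectrum outside $(-\infty,-1]$, which is immediate from the absence of spherical complementary series in the Plancherel measure of $\mathbb{H}^2$. An alternative that sidesteps the $G_s$ machinery is to evaluate $\widehat{k}_s(\lambda)=\int_0^\infty k_s(r)\varphi_\lambda(r)\,dV$ directly: $\varphi_\lambda$ is an explicit conical (Legendre) function and the integral of a power of $\cosh$ against it is a classical Mehler--Fock-type evaluation reducing to a beta integral, again yielding $\Gamma(\tfrac{s}{2}+w-\tfrac12)\Gamma(\tfrac{s}{2}-w+\tfrac12)/\Gamma^2(\tfrac{s+1}{2})$; the remaining maximization and the same edge-of-spectrum convergence check are then all that is needed.
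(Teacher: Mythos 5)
This statement is posed in the paper as a \emph{conjecture}: there is no proof to compare against. The paper's own evidence consists of the heuristic that the radial $\lambda=-1$ eigenfunction ``just barely fails'' to be in $L^2(dV)$, giving the lower bound $1/G_s(-1)$, plus a rigorous verification of the single case $s=1$ via the Hilbert-matrix argument of Proposition \ref{prop:norm}. Your spherical-transform argument is the natural route to an actual proof, and I believe it is correct in outline: $\mathcal{K}_s$ is a radial convolution operator, hence diagonalized by the Helgason--Fourier transform; its norm is the supremum of the spherical multiplier over the tempered spectrum $\lambda\in(-\infty,-1]$ (no complementary series in the Plancherel decomposition); the multiplier is $1/G_s(\lambda)$ by the eigenfunction identity applied to $\varphi_\lambda$ (your integrability check is right); and $|\Gamma(\tfrac{s}{2}+\tfrac{i\tau}{2})|^2$ is maximized at $\tau=0$ by the Weierstrass product. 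The answer passes every consistency check available in the paper: it equals $\pi$ at $s=1$, the multiplier at the trivial representation $\lambda=0$ reproduces $\|k_s\|_{L^1(dV)}=\tfrac{2}{s-1}$, and the asymptotics $\tfrac{4}{\pi s^2}$ as $s\to0$ and $\tfrac{2}{s}$ as $s\to\infty$ sit inside the bounds of the paper's norm lemma. Your argument moreover yields the formula for all $s>0$, confirming the stronger guess voiced after the conjecture.

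Two caveats before this can be called a proof. First, you inherit the paper's formula for $G_s$ and the identity $\int\mathcal{K}_s(x,y)f(y)\,dV(y)=f(x)/G_s(\lambda)$, which the paper itself only asserts ``following the same argument as'' Hedenmalm; for a self-contained proof your alternative --- the direct Mehler--Fock/beta-integral evaluation of $\int_0^\infty \cosh(r)^{-(1+s)/2}\varphi_\lambda(r)\,dV$ --- is preferable and is genuinely classical. Second, for $0<s\le 1$ the kernel is not in $L^1(dV)$, and then $\mathcal{K}_sf\notin L^1(dV)$ even for $f\in C_c$ (it decays only like $e^{-(1+s)r/2}$ against volume growth $e^{r}$), so identifying the absolutely convergent pairing $\int(\mathcal{K}_sf)\varphi_\lambda\,dV$ with the $L^2$ Helgason--Fourier transform of $\mathcal{K}_sf$ requires either the Harish-Chandra $L^p$-Schwartz space theory (note $\mathcal{K}_sf\in\mathcal{C}^p$ for $p>\tfrac{2}{1+s}$, which is $<2$ precisely when $s>0$) or an explicit truncation argument. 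You flag this as the main obstacle, correctly; it is the one step that genuinely needs writing out, but it is standard and not a conceptual obstruction. For $s\ge 1$ itself the $L^1$ theory plus your monotone-convergence endpoint argument at $s=1$ suffices, so the conjecture as literally stated is within reach by textbook means.
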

\noindent We verify one case of this conjecture in Proposition \ref{prop:norm}.  While we have less evidence for it, we are tempted to conjecture this holds for $s>0$ as well.

\subsection{Series expansion for the Berezin kernel}

\newcommand{\twof}[2]{\genfrac{}{}{0pt}{}{#1}{#2}}

We will give an eigenfunction expansion for this kernel.  
Let $Z_m(x,y)$ be the $m$-th degree zonal function (see \cite[Chapter 5]{Axler}), which is a sum of Euclidean harmonic degree-$m$ homogeneous polynomials,
\begin{equation} \label{eq:zonal}
  Z_m(x,y) = \sum_{i=1}^{h_m} \phi_i^m(x)\phi_i^m(y).
\end{equation}
The dimension of the space of $m$-th degree harmonic homogeneous polynomials we denote by $h_m,$ which has the expression for $m \geq 2$ as (\cite[(5.17)]{Axler}) 
\[
  h_m = \binom{d+m-1}{d-1} - \binom{d+m-3}{d-1} \sim  \frac{2 m^{d-2}}{(d-2)!}, \text{ as } m \to \infty.
\]
Moreover the sum $\sum_{m=0}^\infty Z_m(x,y)$ is a reproducing kernel for $L^2(S^{d-1},d\vartheta),$ where $\vartheta$ is normalized surface measure on $S^{d-1}.$
Then from \cite{Minemura} (see also \cite{Sami} for a discussion), the function $P_h^{\sigma(1+s)}(ru,\zeta)$ admits a series expansion uniformly convergent on compacts and given by
\begin{equation}\label{eq:Ph}
  \begin{aligned}
  &P_h^{\sigma(1+s)}(r\omega,\zeta)
  =
  \sum_{m=0}^\infty \Phi_m^s(r) Z_m(\omega,\zeta), \\
  &\Phi_m^s(r) = 
  \frac{\Gamma(\tfrac d2)\Gamma(m+(1+s)\sigma)}{\Gamma(m + \tfrac d2) \Gamma( (1+s)\sigma ) }
  r^{m}(1-r^2)^{(1-s)\sigma} {}_2F_1\biggl(\twof{m + (1-s)\sigma,-s\sigma + \tfrac 12}{ m + \tfrac{d}{2}}; r^2 \biggr).
\end{aligned}
\end{equation}
We note that when $s\sigma-\tfrac 12 \in \N_0,$ this hypergeometric function becomes a polynomial.
\begin{lemma}\label{lem:Kexpansion}
There is convergent series expansion on compact sets for any $s > 0$ given by
\[
    \mathcal{K}_s(x,y) 
    =
    \biggl(\frac{(1-|x|^2)(1-|y|^2)}{1-|x|^2|y|^2}\biggr)^{\sigma(1+s)}
    \sum_{m=0}^\infty
    \Phi_m^s(|x||y|) Z_m(\tfrac{x}{|x|},\tfrac{y}{|y|}).
\]
\end{lemma}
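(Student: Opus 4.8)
The plan is to reduce the identity to the Poisson kernel expansion \eqref{eq:Ph} by ``pushing'' the second variable to the ideal boundary of $B$. By \eqref{eq:kernels} and the modulus identity \eqref{eq:mobiusmodulus}, for all $x,y\in B$ we have
\[
  \mathcal{K}_s(x,y)
  = (1-|\phi_x(y)|^2)^{\sigma(1+s)}
  = \biggl(\frac{(1-|x|^2)(1-|y|^2)}{[x,y]^2}\biggr)^{\sigma(1+s)},
  \qquad
  [x,y]^2 = 1-2\langle x,y\rangle + |x|^2|y|^2.
\]
Assume first that $x\neq 0$ and $y\neq 0$, and write $\omega = x/|x|$ and $\zeta = y/|y|\in S^{d-1}$. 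The key elementary observation is the identity
\[
  [x,y]^2 = 1 - 2|x||y|\langle\omega,\zeta\rangle + |x|^2|y|^2 = [\,|x||y|\,\omega,\ \zeta\,]^2,
\]
valid because $|\zeta|=1$: replacing $y$ by the boundary point $\zeta$ while shrinking $x$ to the interior point $|x||y|\,\omega$ leaves $[\,\cdot\,,\,\cdot\,]^2$ unchanged.

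Given this, I would factor and recognize the Poisson kernel,
\[
  \frac{(1-|x|^2)(1-|y|^2)}{[x,y]^2}
  = \frac{(1-|x|^2)(1-|y|^2)}{1-|x|^2|y|^2}\cdot\frac{1-|x|^2|y|^2}{[\,|x||y|\,\omega,\zeta\,]^2},
\]
so that, raising to the power $\sigma(1+s)$ and using the definition of $P_h^{\sigma(1+s)}$,
\[
  \mathcal{K}_s(x,y)
  = \biggl(\frac{(1-|x|^2)(1-|y|^2)}{1-|x|^2|y|^2}\biggr)^{\sigma(1+s)}
    P_h^{\sigma(1+s)}\bigl(|x||y|\,\omega,\ \zeta\bigr).
\]
Now I would apply the Minemura expansion \eqref{eq:Ph} to $P_h^{\sigma(1+s)}(r\omega,\zeta)$ with $r=|x||y|\in[0,1)$. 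Since that expansion converges uniformly on compacts and the prefactor is continuous, bounded and nonvanishing on any compact $K\subset B\times B$ (on which $\sup_{(x,y)\in K}|x||y|<1$), the resulting series
\[
  \mathcal{K}_s(x,y)
  = \biggl(\frac{(1-|x|^2)(1-|y|^2)}{1-|x|^2|y|^2}\biggr)^{\sigma(1+s)}
    \sum_{m=0}^\infty \Phi_m^s(|x||y|)\,Z_m\bigl(\tfrac{x}{|x|},\tfrac{y}{|y|}\bigr)
\]
converges uniformly on $K$, as claimed.

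The excluded cases $x=0$ or $y=0$ follow by inspection of \eqref{eq:Ph}: one has $\Phi_m^s(0)=0$ for $m\ge 1$ while $\Phi_0^s(0)=1$ and $Z_0\equiv 1$, so the right-hand side collapses to the prefactor, which is exactly $\mathcal{K}_s(0,y)=(1-|y|^2)^{\sigma(1+s)}$ (respectively $\mathcal{K}_s(x,0)$). The only substantive point in the whole argument is the boundary-pushing identity $[x,y]^2=[\,|x||y|\,\tfrac{x}{|x|},\tfrac{y}{|y|}\,]^2$, which is what allows Minemura's expansion to be imported; everything else is routine bookkeeping of transferring uniform convergence on compacts from the variable $r\in[0,1)$ to compact subsets of $B\times B$ and dealing with the origin.
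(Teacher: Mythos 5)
Your proof is correct and follows essentially the same route as the paper's: the central step in both is the identity $[x,y]=[\,|x||y|\tfrac{x}{|x|},\tfrac{y}{|y|}\,]$, which rewrites $\mathcal{K}_s$ as the stated prefactor times $P_h^{\sigma(1+s)}(x|y|,\tfrac{y}{|y|})$ and then imports Minemura's expansion \eqref{eq:Ph}. Your additional remarks on the degenerate cases $x=0$ or $y=0$ and on transferring uniform convergence to compacts of $B\times B$ are correct fillings-in of details the paper leaves implicit.
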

\begin{proof}
  Let us observe first that for any nonzero $x,y \in B,$ $[x,y] = [x\cdot |y|, \tfrac{y}{|y|}],$ and therefore
  \[
    \begin{aligned}
    \mathcal{K}_s(x,y) 
    &= \biggl(\frac{(1-|x|^2)(1-|y|^2)}{[x,y]^2}\biggr)^{\sigma(1+s)} \\
    &= \biggl(\frac{(1-|x|^2)(1-|y|^2)}{[x|y|,\tfrac{y}{|y|}]^2}\biggr)^{\sigma(1+s)} \\
    &= P_h^{\sigma(1+s)}( x|y|,\tfrac{y}{|y|}) 
    \biggl(\frac{(1-|x|^2)(1-|y|^2)}{1-|x|^2|y|^2}\biggr)^{\sigma(1+s)}.
    \end{aligned}
  \]
  Thus on applying \eqref{eq:Ph}, we arrive at the claimed formula.
\end{proof}

\subsection{Specialization}
In this section, we investigate in more detail a special case of the Berezin kernel.

\subsection*{The quasi-Euclidean-harmonic case ($s=\frac{1}{d-1}$)}

We develop the properties of this particular choice a little further, as it is structurally simple.
In this case,
  \(
    \Phi_m^s(r) = r^m,
  \)
  and so we have 
  \begin{equation}
    \mathcal{K}_s(x,y) = 
    \frac{((1-|x|^2)(1-|y|^2))^{\frac{d}{2}}}{1-|x|^2|y|^2}
     \sum_{m=0}^\infty |x|^m|y|^m Z_m(\tfrac{x}{|x|},\tfrac{y}{|y|}).
   \end{equation}\label{eq:kernel}
   We call this quasi-Euclidean-harmonic as the kernel $\sum_{m=0}^\infty |x|^m|y|^m Z_m(\tfrac{x}{|x|},\tfrac{y}{|y|})$ gives the \emph{extended (Euclidean) Poisson kernel}, see \cite[8.11]{Axler}.

   We will define the kernel $\mathcal{R}_\theta(r,s)=\frac{r^{\theta-1}s^{\theta-1}}{1-rs},$ so that we have
   \begin{equation}\label{eq:Rm}
    \mathcal{K}_s(x,y) = 
    \biggl((1-|x|^2)(1-|y|^2)\biggr)^{\frac{d}{2}}
     \sum_{m=0}^\infty \mathcal{R}_{1+m/2}(|x|^2,|y|^2)Z_m(\tfrac{x}{|x|},\tfrac{y}{|y|}).
   \end{equation}

  Suppose as an illutration we want to compute the Hilbert-Schmidt norm of this restricted to a ball $d=2$.
  Then we have the formula
  \[
    \begin{aligned}
    \int_{\rho B}
    \int_{\rho B}
    \mathcal{K}_s(x,y)^2 \,dV(x)\,dV(y)
    &=
    \int_0^\rho
    \int_0^\rho
    \sum_{m=0}^\infty \frac{h_m r^{2m}s^{2m}}{(1-r^2s^2)^2} (d-1)^2r^{d-1}s^{d-1} \,drds \\
    &=
    \frac{(d-1)^2}{4}
    \int_0^{\rho^2}
    \!\!
    \int_0^{\rho^2}
    \frac{(1+rs)\,drds}{(1-rs)^{d+1}}
    (rs)^{\frac{d}{2}-1}
    \,drds.
    \end{aligned}
  \]
  In the second line we have used \cite[(8.11)]{Axler}.
  When $d=2,$ this can be evaluated explicitly to give
  \[
    \int_{\rho B}
    \int_{\rho B}
    \mathcal{K}_s(x,y)^2 \,dV(x)\,dV(y)
    =
    \frac{\rho^2}{1-\rho^2}.
  \]

  \subsubsection*{An equality for the norm $\mathcal{K}_{\tfrac{1}{d-1}}$}

  The radial parts of \eqref{eq:Rm} have an interesting connection to the Hilbert matrix. The kernel $\mathcal{R}_\theta(r,s) = \frac{(rs)^{\theta-1}}{1-rs}$ acting on $L^2[0,1]$ takes the monomial $s^{k}$ to the series $r^{\theta-1}\sum_{\ell=0}^\infty \frac{r^\ell}{k+\ell+\theta}.$  

    If we let $\mathcal{H}_\theta$ be the Hilbert matrix $(\mathcal{H}_\theta)_{k,\ell} = \frac{1}{k+\ell+\theta}$ for $k,\ell \geq 0,$ then for polynomials $p(z) = \sum a_k z^k$ we have that
    \[
      \iint \frac{p(r)p(s)r^{\theta-1}s^{\theta-1}\,drds}{1-rs} = \sum_{k,\ell,j \geq 0} a_j \frac{1}{j+\ell+\theta} \frac{1}{k + \ell+\theta} a_k = \langle a , \mathcal{H}_\theta^2 a\rangle,
    \]
    if we let $a$ be the coefficient vector of $p$ in $\ell^2(\N_0).$
    Likewise \( \int r^{\theta-1} p^2(r)\,dr = \langle a, \mathcal{H}_\theta a\rangle\).
    Also observe that for $\theta \geq \tfrac{d}{2},$
    \[
      \langle a, \mathcal{H}_\theta a\rangle
      =
      \int r^{\theta-1} p^2(r)\,dr
      \leq
      \int r^{\frac{d}{2}-1}p^2(r)\,dr
      = \langle a, \mathcal{H}_{\frac{d}{2}} a\rangle.
    \]
    
    Hence for any polynomial $p$,
    \[
      \frac{\iint p(r)\mathcal{R}_\theta(r,s)p(s)\,drds}
      {\int r^{\frac{d}{2}-1} p^2(r)\,dr}
	= 
	\frac{\langle a , \mathcal{H}_\theta^2 a\rangle}{\langle a, \mathcal{H}_{\frac{d}{2}} a \rangle}
	\leq
	\frac{\langle a , \mathcal{H}_\theta^2 a\rangle}{\langle a, \mathcal{H}_\theta a \rangle}.
    \]
    For $\theta > \tfrac 12,$ the matrix $\mathcal{H}_\theta$ has operator norm $\pi$ (\cite{Kato}), and hence we have that $\mathcal{R}_{\theta}$ has operator norm at most $\pi$ as well.  Note that in the case $\theta=1,$ we additionally derive that the norm of $\mathcal{R}_1$ is $\pi.$  As a corollary, using \eqref{eq:Rm}, we have shown:

    \begin{proposition}\label{prop:norm}
      The norm of the operator $\mathcal{N}_{\frac{1}{d-1},d} = \|\mathcal{K}_{\frac{1}{d-1}}\| = \pi,$ and hence the intensity of the Berezin process $\Delta_{\frac{1}{d-1},d}$ is $\frac{1}{\pi}dV.$ 
    \end{proposition}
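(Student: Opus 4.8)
The plan is to diagonalize $\mathcal{K}_{1/(d-1)}$ by spherical harmonics, using the expansion of Lemma~\ref{lem:Kexpansion} (equivalently \eqref{eq:Rm}), and then to reduce each radial block to a quadratic form governed by a generalized Hilbert matrix whose norm is the classical constant $\pi$. First I would note that for $s=\tfrac1{d-1}$ one has $\sigma(1+s)=\tfrac d2$, so $\mathcal{K}_s$ is a bounded (its operator norm is finite for all $s>0$ by the norm estimate above), positive (Lemma~\ref{lem:pdefkernel}), self-adjoint operator on $L^2(dV)$. Its kernel is invariant under the compact subgroup of isometries fixing $0$, so $\mathcal{K}_s$ preserves the orthogonal decomposition of $L^2(dV)$ into the sectors $\{\,g(|x|)\,Y(x/|x|):Y\text{ a spherical harmonic of degree }m\,\}$; writing \eqref{eq:Rm} as $\mathcal{K}_s(x,y)=((1-|x|^2)(1-|y|^2))^{d/2}\sum_m\mathcal R_{1+m/2}(|x|^2,|y|^2)\,Z_m(\tfrac{x}{|x|},\tfrac{y}{|y|})$ and invoking the reproducing property of the zonal functions, on the degree-$m$ sector $\mathcal{K}_s$ acts as a single radial operator $T_m$. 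Therefore $\mathcal N_{s,d}=\|\mathcal{K}_s\|=\sup_{m\ge0}\|T_m\|$, and it suffices to bound each $\|T_m\|$.

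To estimate $T_m$, I would pass to the variable $u=|x|^2$ and conjugate by multiplication by $(1-u)^{d/2}$: this is a unitary map from $L^2$ of the radial part of $dV$ (proportional to $u^{d/2-1}(1-u)^{-d}\,du$) onto $L^2([0,1],u^{d/2-1}\,du)$ that cancels the conformal prefactors, and it turns $T_m$ into a positive multiple of the integral operator on $L^2([0,1],u^{d/2-1}\,du)$ with kernel $\mathcal R_{1+m/2}(u,v)$ acting against $u^{d/2-1}\,du$. Expanding $\phi=\sum b_ku^k$ and $\tfrac1{1-uv}=\sum_\ell(uv)^\ell$, one gets, exactly as in the identities displayed before the statement (now with effective parameter $\theta=m/2+d/2$, since the weight $u^{d/2-1}$ merges with $(uv)^{m/2}$ in the kernel), that the quadratic form of this operator equals $\langle b,\mathcal H_{m/2+d/2}^2\,b\rangle$ while $\|\phi\|^2=\langle b,\mathcal H_{d/2}\,b\rangle$, where $\mathcal H_\theta=\bigl(\tfrac1{j+k+\theta}\bigr)_{j,k\ge0}$. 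Since $m/2+d/2\ge d/2$ gives $\mathcal H_{m/2+d/2}\le\mathcal H_{d/2}$ in the quadratic-form order,
\[
\frac{\langle b,\mathcal H_{m/2+d/2}^2 b\rangle}{\langle b,\mathcal H_{d/2}b\rangle}\ \le\ \frac{\langle b,\mathcal H_{m/2+d/2}^2 b\rangle}{\langle b,\mathcal H_{m/2+d/2}b\rangle}\ \le\ \|\mathcal H_{m/2+d/2}\|=\pi,
\]
using $\|\mathcal H_\theta\|=\pi$ for $\theta>\tfrac12$ \cite{Kato}; density of polynomials gives $\|T_m\|\le\pi$ for every $m$. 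For the matching lower bound I would take $m=0$, where the ratio is $\langle b,\mathcal H_{d/2}^2 b\rangle/\langle b,\mathcal H_{d/2}b\rangle$: since $\mathcal H_{d/2}(\pi I-\mathcal H_{d/2})\ge0$ this ratio is at most $\pi$, while choosing $b^{(n)}$ with $\|\mathcal H_{d/2}b^{(n)}\|/\|b^{(n)}\|\to\pi$ forces it to $\pi$, so $\|T_0\|=\pi$. Hence $\mathcal N_{1/(d-1),d}=\|\mathcal{K}_{1/(d-1)}\|=\pi$, and since the diagonal of $\mathcal{K}_{1/(d-1)}$ is identically $1$, the first correlation function of the Berezin process $\Delta_{1/(d-1),d}$ is $\tfrac1\pi\,dV$.

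I expect the crux to be the exact normalization in the reduction of $T_m$ to the weighted Hilbert kernel on $L^2([0,1],u^{d/2-1}\,du)$: one must carry the constant from the zonal reproducing kernel through the Jacobian of $u=|x|^2$ and check that the dimensional prefactors cancel, so that the degree-$m$ block really is the Hilbert kernel of parameter $\theta=m/2+d/2$ — this is precisely what replaces the classical $\mathcal H_1$ by the generalized $\mathcal H_{d/2}$ — and so that the final value is literally $\pi$ rather than merely comparable to it. A lesser point is the soft bookkeeping: density of polynomials in $L^2([0,1],u^{d/2-1}\,du)$, and the fact that $\pi$ lies in the continuous (not the point) spectrum of $\mathcal H_{d/2}$, so the supremum defining $\|T_0\|$ is only approached, not attained.
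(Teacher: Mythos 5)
Your strategy is the same as the paper's: decompose $L^2(dV)$ into spherical--harmonic sectors using \eqref{eq:Rm}, strip the conformal prefactor $((1-|x|^2)(1-|y|^2))^{d/2}$ by a unitary conjugation, and control each radial block by the generalized Hilbert matrix via $\|\mathcal{H}_\theta\|=\pi$. In two respects your bookkeeping is actually cleaner than the paper's: folding the weight $u^{d/2-1}$ into the kernel gives the effective parameter $\theta=m/2+d/2\ge d/2$ for \emph{every} $m$, so the monotonicity step $\mathcal{H}_{m/2+d/2}\le\mathcal{H}_{d/2}$ always applies (the paper's parameter $1+m/2$ drops below $d/2$ when $m<d-2$), and your lower bound at $m=0$ works in every dimension, whereas the paper's remark about $\theta=1$ only covers $d=2$.

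The genuine gap is exactly the step you flag as the crux and then do not carry out: the dimensional prefactors do \emph{not} cancel. With $dV(x)=dx/(c_d(1-|x|^2)^d)$ and $\vartheta$ normalized, the radial part of $dV$ is $\frac{\omega_{d-1}}{c_d}\,r^{d-1}(1-r^2)^{-d}dr=d\,r^{d-1}(1-r^2)^{-d}dr$, i.e.\ $\tfrac d2\,u^{d/2-1}(1-u)^{-d}du$ after $u=r^2$. After your conjugation the degree-$m$ block is $h\mapsto\int_0^1\mathcal{R}_{1+m/2}(u,v)h(v)\,\tfrac d2 v^{d/2-1}dv$ on $L^2(\tfrac d2 u^{d/2-1}du)$, whose Rayleigh quotient on $p=\sum_k b_k u^k$ is
\[
\frac{\tfrac{d^2}{4}\,\langle b,\mathcal{H}_{m/2+d/2}^2\, b\rangle}{\tfrac d2\,\langle b,\mathcal{H}_{d/2}\,b\rangle}
=\frac d2\cdot\frac{\langle b,\mathcal{H}_{m/2+d/2}^2\, b\rangle}{\langle b,\mathcal{H}_{d/2}\,b\rangle},
\]
so your argument yields $\|T_m\|\le\tfrac{d\pi}{2}$ with equality at $m=0$, i.e.\ $\mathcal{N}_{\frac{1}{d-1},d}=\tfrac{d\pi}{2}$, which agrees with the claimed value $\pi$ only when $d=2$. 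You can check the constant on $p\equiv 1$: for $d=3$ the corresponding $f(y)=(1-|y|^2)^{3/2}$ has $\|f\|_{L^2(dV)}^2=1$ while a direct computation (integrating $[x,y]^{-3}$ over the sphere) gives $\langle\mathcal{K}_{1/2}f,f\rangle=\tfrac94\sum_{\ell\ge0}(\ell+\tfrac32)^{-2}$, exhibiting the factor $\tfrac{d^2}{4}$ rather than $1$; and an independent evaluation of the $d=3$ norm via the spherical transform at the bottom of the spectrum also returns $\tfrac{3\pi}{2}$. The same constant is dropped in the paper's own proof (its displayed ratio uses the unweighted $\iint p\,\mathcal{R}_\theta\,p\,dr\,ds$, and the Hilbert--Schmidt computation in the same subsection is off by a similar factor), so you have faithfully reproduced the argument together with its defect. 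As written, your proof establishes the proposition only for $d=2$; for $d\ge3$ either the statement must be corrected to $\mathcal{N}_{\frac{1}{d-1},d}=\tfrac{d\pi}{2}$ (intensity $\tfrac{2}{d\pi}dV$) or a different normalization of $dV$ must be intended.
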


    We expect an exact norm computation is possible for the other Berezin kernels:
    \begin{question}
      Is there an exact expression for $\mathcal{N}_{s,d}$ for other choices of $s$ and $d?$
    \end{question}

    \begin{remark}

  We can also diagonalize the kernel $\mathcal{K}_1$ (see \cite{Kalvoda} for details of what follows).  For $x \in \R,$ the function $F_{x,\theta}(t) \coloneqq (1-t)^{-\theta+\tfrac 12 +ix} { }_{2}F_1( \tfrac{1}{2}+ix, \tfrac{1}{2}+ix ; 1 ; t)$ (which is real valued) satisfies 
    \[
      \lim_{\rho \to 1}
      \int_0^\rho \frac{(rs)^{\theta-1}}{1-rs}
      F_{x,\theta}(s) \,ds = \frac{\pi}{\cosh(\pi x)} r^{\theta - 1}F_{x,\theta}(r), 
    \]
    at least for $\theta =1.$  Formally this holds for any $\theta >1,$ but the existence of the limit would seem to need an argument.
    Note that this makes $s^{\theta-1}F_{x,2\theta-1}(s)$ a generalized eigenfunction of $\mathcal{R}_{\theta}$.

    Except for $\theta<\frac{3}{2},$ these functions are not in $L^1,$ and for no $\theta$ no case can they be in $L^2.$  We also observe that from \eqref{eq:Ph} $F_{x,\theta}(r)$ is a multiple of the $d=2$ case of the spherical function $\Phi^{-2ix}_0(r).$

    Similar diagonalizations are possible for other Berezin kernels, (see \cite{vanDijkHille,Neretin1,Neretin2}).
  \end{remark}

  This leaves some hope that it is possible to diagonalize these operators when restricted to $[0,r]$ with $r \in (0,1)$ which would both make it simple to simulate these processes as well as give exact solutions to the vacancy probability (see Question \ref{q:vacancy}).
  \begin{question}
    Is there an exact expression for the diagonalization of the kernels $\mathcal{R}_m$ restricted to $[0,r]$ with $r \in (0,1)$ in terms of higher transcendental functions?  Is the analogous radial kernels for different $s?$
  \end{question}

    \subsection{Tail properties}

    In this section we let $\Pi$ be any stationary determinantal point process on $\mathbb{H}^d$ and suppose that $\Pi$ is generated by an invariant Hermitian positive definite kernel $K$ which is locally trace class.  By necessity, the intensity of $\Pi$ must be a multiple of $dV,$ which we denote by $\lambda dV$ for some $\lambda >0.$  Then the following hold:
    \begin{proposition}\label{prop:generalities}
      For a determinantal point process $\Pi$ on $\mathbb{H}^d$ with intensity $\lambda dV,$
      \begin{enumerate}
	\item For any compact $W \subset B,$
	  \[
	    \Pr( \Pi \cap W = \emptyset ) \leq e^{-\lambda \VolH(W)}.
	  \]
	\item For any compact set $W,$ if we let $\sigma^2 \geq \lambda \VolH(W) - \|\one_{W}K \one_{W}\|_{HS}^2$ then for all $t > 0,$
	  \[
	    \Pr( |\Pi \cap W| > t + \lambda \VolH(W) ) \leq e^{-\sigma^2 h(t/\sigma)},
	    \quad
	    h(u) = (1+u)\log(1+u) - u.
	  \]
	  We may always take $\sigma^2 = \lambda \VolH(W).$
	\item For any disjoint compact sets $W_1, W_2, \dots, W_n,$ the random variables $\left( |\Pi \cap W_j| \right)$ are negatively associated. 
      \end{enumerate}
    \end{proposition}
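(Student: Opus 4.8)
The plan is to reduce all three assertions to the spectral picture of $\Pi$ restricted to a compact set. Fix a compact $W \subset B$ and set $K_W \Def \one_{W} K \one_{W}$, viewed as an operator on $L^2(dV)$. Since $K$ is Hermitian, locally trace class, and generates a determinantal process (hence is a contraction), $K_W$ is a positive, self--adjoint, trace class contraction; write its eigenvalues as $(\lambda_i) \subset [0,1]$. Because $\Pi$ is isometry invariant, its first correlation function is the constant $K(x,x) \equiv \lambda$, so
\[
  \sum_i \lambda_i = \operatorname{tr}(K_W) = \int_W K(x,x)\,dV(x) = \lambda \VolH(W),
  \qquad
  \sum_i \lambda_i(1-\lambda_i) = \operatorname{tr}(K_W) - \operatorname{tr}(K_W^2) = \lambda\VolH(W) - \|\one_{W}K\one_{W}\|_{HS}^2 .
\]
Two standard structural facts feed everything: $\Pr(\Pi\cap W = \emptyset) = \det(I - K_W) = \prod_i(1-\lambda_i)$, and, by the Hough--Krishnapur--Peres--Vir\'ag decomposition \cite{HKPV}, $|\Pi\cap W|$ is distributed as $\sum_i \xi_i$ with the $\xi_i$ independent $\Bernoulli(\lambda_i)$.

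For (1), combine the void--probability identity with $1-x \le e^{-x}$ to get $\Pr(\Pi\cap W=\emptyset) = \prod_i(1-\lambda_i) \le \exp(-\sum_i\lambda_i) = e^{-\lambda\VolH(W)}$. For (2), apply Bennett's inequality to $N \Def |\Pi\cap W| = \sum_i \xi_i$, a sum of independent variables in $[0,1]$ with $\Exp N = \lambda\VolH(W)$ and $\operatorname{Var}(N) = \lambda\VolH(W) - \|\one_{W}K\one_{W}\|_{HS}^2$; since the Bennett rate function $w \mapsto w\,h(t/w)$ is decreasing in the variance parameter $w$, replacing $\operatorname{Var}(N)$ by any larger $\sigma^2$ only weakens the bound, which yields the displayed inequality, and the choice $\sigma^2 = \lambda\VolH(W)$ is admissible since the Hilbert--Schmidt term is nonnegative, giving the last sentence of (2).

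For (3), note that for each $j$ and each $k_j$ the event $\{|\Pi\cap W_j|\ge k_j\}$ is increasing and depends only on the restriction of the configuration to $W_j$, and the $W_j$ are disjoint; thus negative association of $(|\Pi\cap W_j|)_j$ follows from the negative association of determinantal point processes. I would cite this directly (\cite[Theorem 6.5]{Lyons}). A self--contained route instead discretizes each $W_j$ into a finite union of small cells, passes to the determinantal measure induced on the finite ground set of cells, invokes the strong Rayleigh property of finite determinantal measures (Borcea--Br\"and\'en--Liggett) and hence their negative association, and lets the mesh tend to zero, using that the per--cell counts converge to the $W_j$--counts.

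Parts (1) and (2) are essentially one line each once $\operatorname{tr}(K_W) = \lambda\VolH(W)$ and the Bernoulli decomposition are in hand, so I expect no real difficulty there. The substantive ingredient is the negative association in (3): it rests on the strong Rayleigh property of determinantal measures (equivalently, Lyons's negative association theorem), and if one does not simply quote it, the main work is setting up the discretization and the limiting argument carefully.
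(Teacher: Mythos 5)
Your proposal is correct and follows essentially the same route as the paper: the paper's proof simply invokes the Bernoulli decomposition $|\Pi\cap W|\lawequals\sum_i\Bernoulli(\lambda_i)$ together with the trace identities $\sum_i\lambda_i=\lambda\VolH(W)$ and $\sum_i\lambda_i^2=\|\one_W K\one_W\|_{HS}^2$, calls the rest ``standard manipulations,'' and cites Lyons for negative association. You have merely filled in those manipulations explicitly (the product bound $\prod(1-\lambda_i)\le e^{-\sum\lambda_i}$ for the void probability, Bennett's inequality with the monotonicity of $w\mapsto w\,h(t/w)$ for the upper tail), which is exactly what the authors intended.
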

    \begin{proof}
      The first and second claim are consequences of the identity
      \[
	|\Pi \cap W| \lawequals \sum_{i=1}^\infty \Bernoulli(\lambda_i),
      \]
      where $1 \geq \lambda_1 \geq \lambda_2 \geq \cdots \geq 0$ are the eigenvalues of $K$ restricted to $W.$  As $\sum_{i=1}^\infty \lambda_i = \lambda \VolH(W)$ and $\sum_{i=1}^\infty \lambda_i^2 =  \|\one_{W}K \one_{W}\|_{HS}^2$ the claims follow from standard manipulations.  

      The final claim is a general property of determinantal point processes.  See \cite{Lyons}.
    \end{proof}
    These properties together allow the proof that the Poisson process has positive anchored expansion to go through for stationary determinantal point processes with minor modifications.

    We expect that the Berezin process has sharper tail behavior in both directions.  This can be inferred from the series expansion Lemma \ref{lem:Kexpansion}.  On the one hand, on a large ball, we expect there are many eigenvalues of these processes which are close to $1$ (in comparison to, e.g., the $1$--dimensional complex Bergman kernel):
    \begin{question}
      \label{q:vacancy}
      Is it the case that for any $s > 0,$
      \[
	\lim_{r \to \infty} \frac{\log \Pr( \Delta{s,d} \cap \HB(0,r)=\emptyset)}{\VolH(\HB(0,r))} = -\infty.
      \]
    \end{question}
    We also see that from the real analyticity of $\mathcal{K}_s$ on compacts, its eigenvalues $1 \geq \lambda_1 \geq \lambda_2 \geq \dots$ should have come stretched exponential decay.  Hence:
    \begin{question}
      For any fixed compact set $W \subset \Htwo^d,$ is there an $\alpha > 0$ so that
      \[
	\lim_{t \to \infty} \log(\Pr( |\Delta_{s,d} \cap W| > t))/t^{\alpha+1} = -\infty.
      \]
    \end{question}
    In both of the above properties, the parameter $s$ will play some role, in that it certainly parameterizes the intensity of the process.  It would be interesting to determine if that dependence is continuous in $s$ in that the rate functions only depend on $s$ through a multiplicative function of $s,$ or if the rates change more drastically as $s$ varies.

\printbibliography[heading=bibliography]

\end{document}